\definecolor{DarkGreen}{rgb}{0.1,0.5,0.1}
\definecolor{DarkRed}{rgb}{0.5,0.1,0.1}
\definecolor{DarkBlue}{rgb}{0.1,0.1,0.5}
\newcommand{\deffont}[1]{\textnormal{\textsf{#1}}} 
\newcommand{\C}{\ensuremath{\mathbb{C}}}  
\newcommand{\cM}{\ensuremath{\mathcal{M}}}
\newcommand{\R}{{\mathbb R}}
\newcommand{\Z}{{\mathbb Z}}
\newcommand{\ZZ}{{\mathbb Z}}
\newcommand{\RR}{{\mathbb R}}
\newcommand{\N}{\ensuremath{\mathbb{N}}}
\newcommand{\PP}{\ensuremath{\mathbb{P}}}
\mathchardef\mhyphen="2D
\newcommand{\inabs}[1]{\left|#1\right|}
\newcommand{\inset}[1]{\left\{#1\right\}}
\newcommand{\inabset}[1]{\inabs{\inset{#1}}}
\newcommand{\inparen}[1]{\left(#1\right)}
\newcommand{\inang}[1]{\left\langle#1\right\rangle}
\newcommand{\ol}{\overline}
\newcommand{\eps}{\varepsilon}
\renewcommand{\epsilon}{\varepsilon}
\newif\ifcomments
\theoremstyle{plain}
\declaretheorem[name=Theorem,numberwithin=section]{theorem}
\declaretheorem[name=Lemma,sibling=theorem]{lemma}
\newtheorem*{lemma*}{Lemma} 
\newtheorem*{theorem*}{Theorem} 
\newtheorem{definition}[theorem]{Definition}
\newtheorem{defn}[theorem]{Definition}
\newtheorem{corollary}[theorem]{Corollary} 
\newtheorem{remark}[theorem]{Remark}
\newtheorem{proposition}[theorem]{Proposition}
\newtheorem{prop}[theorem]{Proposition}
\declaretheorem[name=Problem]{problem}
\newtheorem{conjecture}[problem]{Conjecture}
\Crefname{ourtheorem}{Theorem}{Theorems}
\Crefname{ourcorollary}{Corollary}{Corollary}
\Crefname{property}{Property}{Properties}
\Crefname{condition}{Condition}{Conditions}
\Crefname{observation}{Observation}{Observations}
\Crefname{step}{Step}{Steps}
\Crefname{customthm}{Theorem}{Theorems}
\Crefname{customlem}{Lemma}{Lemmas}
\newtheoremstyle{named}{}{}{\itshape}{}{\bfseries}{.}{.5em}{\thmnote{#3}}
\theoremstyle{named}
\DeclareMathOperator{\Out}{Out}%
\DeclareMathOperator{\id}{id}%
\DeclareMathOperator{\Sym}{Sym}%
\DeclareMathOperator{\stab}{Stab}%
\DeclareMathOperator{\GL}{GL}%
\DeclareMathOperator{\Aut}{Aut}%
\DeclareMathOperator{\PSL}{PSL}%
\DeclareMathOperator{\Hom}{Hom}%
\DeclareMathOperator{\Sp}{Sp}%
\DeclareMathOperator{\T}{(T)}%
\DeclareMathOperator{\MCG}{MCG}%
\DeclareMathOperator{\Sol}{Sol}%
\DeclareMathOperator{\BS}{BS}%
\DeclareMathOperator{\IRS}{IRS}%
\DeclareMathOperator{\Sub}{Sub}%
\DeclareMathOperator{\findex}{f.i.}%
\DeclareMathOperator{\ptau}{(\tau)}
\DeclareMathOperator{\Emp}{Empirical}
\DeclareMathOperator{\eperiod}{\,\,\text{.}}
\DeclareMathOperator{\ecomma}{\,\,\text{,}}
\DeclareMathOperator{\TV}{TV}%
\newcommand{\dTV}{d_{\TV}}%
\DeclareMathOperator{\UB}{UB}%
\mathchardef\shortdash="2D
\DeclareMathOperator{\flex}{\shortdash flex}
\newcommand{\SAS}{\textup{\texttt{Sample and Substitute}}}
\newcommand{\LSM}{\textup{\texttt{Local Statistics Matcher}}}
\newcommand{\varX}{\mathsf{X}}
\newcommand{\varY}{\mathsf{Y}}
\newcommand{\calU}{\mathcal{U}}
\newcommand{\calH}{\mathcal{H}}
\begin{document}
	
\title{Testability in group theory}

\author{Oren Becker}
\address{Oren Becker, Centre for Mathematical Sciences,
Wilberforce Road, Cambridge CB3 0WA, United Kingdom}
\email{oren.becker@gmail.com}

\author{Alexander Lubotzky}
\address{Alexander Lubozky, Department of Mathematics, Weizmann Institute of Science, Rehovot, Israel}
\email{alex.lubotzky@mail.huji.ac.il}

\author{Jonathan Mosheiff}
\address{Jonathan Mosheiff, Computer Science Department, Carnegie Mellon University,
	Pittsburgh, PA, USA}
\email{jonathanmush@gmail.com}

\date{}
\maketitle

\begin{abstract}
This paper is a journal counterpart to \cite{BLM}, in which we initiate the study of property testing problems concerning a finite system of relations $E$ between permutations, generalizing the study of stability in permutations.
To every such system $E$, a group $\Gamma=\Gamma_E$ is associated and the testability of $E$ depends only on $\Gamma$ (just like in Galois theory, where the solvability of a polynomial is determined by the solvability of the associated group).
This leads to the notion of \emph{testable} groups, and, more generally, \emph{Benjamini--Schramm rigid} groups.
The paper presents an ensemble of tools to check if a given group $\Gamma$ is testable/BS-rigid or not.
\end{abstract}

\section{Introduction}\label{sec:intro}
This paper is a journal counterpart to \cite{BLM},
which appeared in
the proceedings of the 2021 IEEE Annual Symposium on Foundations of
Computer Science (FOCS). In that paper we initiated a systematic
study of testability of relations between permutations. That paper was written from the point of view of property testing---a core subject in theoretical computer science. Here, we present the content of \cite{BLM} (plus some supplements) from a very different perspective.
While that paper was written in a mainly combinatorial language, the current
paper is mostly group theoretic. We will explain how the results of
\cite{BLM} can be viewed (and proved) as group theoretic statements, and through the language of invariant random subgroups. We
hope that this presentation will attract the group theory community
to join this line of research.

From an algorithmic point of view, we are interested in problems where one is given several permutations, and wishes to determine whether they satisfy a given system of equations or are far from doing so. If this can be done by an algorithm that only queries a constant number of entries of the given permutations, we say that the system of equations is \deffont{testable}. For example, consider the system consisting of the single equation $\mathsf{XY=YX}$. The corresponding algorithmic problem is, given two permutations $A$ and $B$ in the symmetric group $\Sym(n)$, to distinguish between the following two cases: (i) $A$ and $B$ commute, or (ii) the pair $(A,B)$ is far, under the normalized Hamming metric, from every commuting pair $(A',B')\in \Sym(n)\times\Sym(n)$.

We begin by reformulating the main definitions of \cite{BLM} in a more group-oriented language.
Throughout the introduction, we
fix a finite set $S = \{s_1,\dotsc, s_k\}$, and let $F_S$ be the free group on $S$.
For a word $w\in F_S$, write $|w|$ for the length of the reduced form of $w$.
For $n\in \N$,
write $\Sym(n)$ for the symmetric group on $[n]\coloneqq\inset{1,\dotsc,n}$. For a tuple of permutations $\ol\sigma = \inparen{ \sigma_1,\dotsc, \sigma_k}\in \Sym(n)^k$, let $w(\ol\sigma)$ denote the image of $w$ under the homomorphism from $F_S$ to $\Sym(n)$ that takes $s_i$ to $\sigma_i$ for all $1\le i\le k$. It will often be useful to think of $\ol\sigma$ as describing a directed graph $G_{\ol \sigma}$ with edges colored $\{1,\dotsc, k\}$, where the vertex set is $[n]$ and for every $1\le i\le k$ and $x\in [n]$ there is an $i$-colored edge from $x$ to $\sigma_i x$. 

Let $E$ be a subset of $F_S$.
Then $E$ gives rise to a system of equations $\inset{w = 1\mid w\in E}$, and one may consider the space of solutions in any group $G$. We are interested in the space of solutions for $E$ in $\Sym(n)$, namely,
\[
\Sol_{E}(n):=\inset{\ol\sigma\in \Sym(n)^k \mid\forall w\in E~~w(\ol\sigma)=\id }\eperiod 
\]
We do not specify the set $S$ in the notation $\Sol_E(n)$ since it will always be clear from the context.

The group $\Sym(n)$ is endowed with the normalized Hamming metric\footnote{We shall omit the subscript $n$ when it is clear from the context.}
\begin{equation}\label{eq:PermutationMetric}
d_{n}(\sigma,\tau):=\frac{1}{n}\left|\left\{ x\in\left[n\right]\mid\sigma x\neq\tau x\right\} \right|\eperiod \end{equation}
For every $\eps>0$, we define
\[
\Sol_{E}^{<\eps}(n):=\inset{ \ol \sigma\in \Sym(n)^k\mid \exists \ol\tau\in \Sol_{E}(n),~\sum_{i=1}^k d\inparen{\sigma_i, \tau_i}<\eps}\ecomma\]
and $\Sol_{E}^{\geq\eps}(n):=\Sym(n)^k\setminus\Sol_E^{<\eps}(n)$. 

We also denote
$\Sol_E = \bigcup_{n\in \N}\Sol_E(n) $, $\Sol_E^{\ge \eps} = \bigcup_{n\in \N}\Sol_E^{\ge \eps}(n)$ and $\Sol_E^{< \eps} = \bigcup_{n\in \N}\Sol_E^{< \eps}(n)$.

\begin{definition}[Algorithmic separation]\label{def:separation}
Fix two disjoint sets $A,B\subseteq \bigcup_{n\in\N}\Sym(n)^k$. An \deffont{$(A,B)$-separator} is a randomized algorithm (that is, an algorithm allowed to make random decisions) $\cM$ which takes as input an integer $n\in \N$ and a tuple of permutations $\ol \sigma\in \Sym(n)^k$, and has the following properties.
\begin{itemize}
    \item \textbf{Completeness:} If $\ol\sigma\in A$ then $\cM$ accepts with probability at least $0.99$.
    \item \textbf{Soundness:} If $\ol \sigma\in B$ then $\cM$ rejects with probability at least $0.99$.
    \item \textbf{Query efficiency:} There exists some $q\in \N$ such that, on every input, the algorithm $\cM$ makes at most $q$ queries, each of the form ``what is $ \sigma_ix$'' or ``what is $ \sigma_i^{-1}x$'' for some $1\le i\le k$ and $1\le x\le n$. Crucially, $q$ is not allowed to depend on $n$.
\end{itemize}
In this case, we also say that $\cM$ has \deffont{query complexity} $q$ and that the pair $(A,B)$ is \deffont{separable} (with $q$ queries).
\end{definition}

\begin{definition}\label{def:testable}
Fix a subset $E$ of $F_S$. If for every $\eps > 0$ the pair $(\Sol_E, \Sol_E^{\ge \eps})$ is separable, we say that $E$ is \deffont{testable}.
\end{definition}

\cref{def:testable} originates in \cite{BLM}. The main goal in this new line of research is identifying which systems of equations are testable. See \cref{sec:examples} for concrete examples.

The following algorithm, introduced in \cite{BeckerMosheiff,BLM}, is a natural attempt at producing a separator for $(\Sol_E, \Sol_E^{\ge \eps})$ when $E$ is finite.
\begin{algorithm}[H]\caption{\label{alg:SAS}
		\textsf{Sample and Substitute} with word set $E$ and repetition factor $s\in \N$\\
		\textbf{Input:} $n\in\N$ and $\ol \sigma\in \Sym(n)^k$}\begin{algorithmic}[1]
		
		\State Sample $\left(w_1,x_1\right),\dotsc,\left(w_s,x_s\right)$
		uniformly and independently from $E\times\left[n\right]$.
		
		\If{ $w_j(\ol\sigma)x_j=x_{j}$ for all $1\leq j\leq s$}
		
		\State Accept.
		
		\Else\State Reject.\EndIf
		
	\end{algorithmic}
\end{algorithm}
Note that \SAS{} makes at most $C\cdot s$ queries, $C=C(E)$. Also note that \SAS{} has perfect completeness, namely, if $\ol\sigma$ belongs to $\Sol_E(n)$, the algorithm accepts deterministically, and not only with probability at least $0.99$. As we shall explain later, for $E=\inset{\varX\varY\varX^{-1}\varY^{-1}}$, \SAS{} is indeed a separator for $(\Sol_E, \Sol_E^{\ge \eps})$ (for an appropriate choice of the repetition factor $s=s(\eps)$), showing that $E$ is testable (this is a reformulation of \cite{ArzhantsevaPaunescu} in an algorithmic language).

The \SAS{} algorithm is related to the extensively studied notion of \emph{stability in permutations} via the following lemma.

\begin{prop}
\label{prop:stability-defs}
    Let $E$ be a finite subset of $F_S$, and write $\Gamma=\inang{S\mid E}$ for the group presented by the generators $S$ and the relators $E$. The following conditions are equivalent.
    \begin{enumerate}
        \item\label[condition]{enum:StabilityBySAS} For every $\eps > 0$ there exists $s\in \N$ such that \SAS{} with word set $E$ and repetition factor $s$ is a $(\Sol_E,\Sol_E^{\ge \eps})$-separator.
        \item\label[condition]{enum:StabilityCombinatorial} For every $\eps > 0$ there exists $\delta > 0$ such that for every $n\in \N$ and $\ol\sigma \in \Sym(n)^k$, if
        \begin{equation} \label{eq:AlmostSolution}
            \frac{1}{\left|E\right|}\sum_{w\in E}d(w(\ol\sigma), \id)< \delta
        \end{equation} 
        then $\ol\sigma \in \Sol_{E}^{<\eps} $.
        \item\label[condition]{enum:StabilityByHoms} For every sequence of functions $\inparen{f_\ell:\Gamma\to \Sym(n_\ell)}_{\ell=1}^\infty$, $n_\ell\to\infty$, such that
        \begin{equation}\label{eq:AsymptoticHomomorphism}
        d\inparen{f_\ell(\gamma_1\gamma_2),f_\ell(\gamma_1)f_\ell(\gamma_2)}\to_{\ell\to \infty} 0\qquad\forall \gamma_1,\gamma_2\in \Gamma\ecomma    
        \end{equation}
        there exists a sequence of homomorphisms $\inparen{h_\ell:\Gamma\to \Sym(n_\ell)}_{\ell=1}^\infty$ such that
        \begin{equation}\label{eq:AsymptoticallyCloseFuncs}
        d\inparen{f_\ell(\gamma),h_\ell(\gamma)}\to_{\ell\to\infty} 0\qquad\forall \gamma\in \Gamma\eperiod    
        \end{equation}
        \end{enumerate}
\end{prop}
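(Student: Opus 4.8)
The plan is to establish the two biconditionals relating \cref{enum:StabilityBySAS} and \cref{enum:StabilityCombinatorial}, and relating \cref{enum:StabilityCombinatorial} and \cref{enum:StabilityByHoms}, separately; the first is a short computation and the second carries the real content. Throughout, write $\mu(\ol\sigma) = \frac{1}{|E|}\sum_{w\in E} d(w(\ol\sigma),\id)$ for the quantity appearing in \eqref{eq:AlmostSolution}, and recall two standard properties of the normalized Hamming metric on $\Sym(n)$: it is bi-invariant (in particular conjugation-invariant), and it is subadditive under multiplication, $d(\sigma\tau,\sigma'\tau')\le d(\sigma,\sigma')+d(\tau,\tau')$.

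For \cref{enum:StabilityBySAS}$\Leftrightarrow$\cref{enum:StabilityCombinatorial}, I would first compute the acceptance probability of \SAS{}. For a single sample, $\Pr_{(w,x)}[w(\ol\sigma)x=x] = \frac{1}{|E|}\sum_{w\in E}(1-d(w(\ol\sigma),\id)) = 1-\mu(\ol\sigma)$, so with repetition factor $s$ the algorithm accepts $\ol\sigma$ with probability exactly $(1-\mu(\ol\sigma))^s$. Completeness is automatic, since \SAS{} has perfect completeness on $\Sol_E$ (where $\mu=0$). Hence \SAS{} with factor $s$ is a separator for a given $\eps$ exactly when $(1-\mu(\ol\sigma))^s\le 0.01$ for all $\ol\sigma\in\Sol_E^{\ge\eps}$, which is achievable for some $s$ if and only if $\mu$ is bounded away from $0$ on $\Sol_E^{\ge\eps}$. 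This last condition is precisely the contrapositive of \cref{enum:StabilityCombinatorial}, giving the equivalence.

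For \cref{enum:StabilityCombinatorial}$\Rightarrow$\cref{enum:StabilityByHoms}, given an asymptotic homomorphism $f_\ell$, set $\sigma_i^{(\ell)}=f_\ell(s_i)$. Applying \eqref{eq:AsymptoticHomomorphism} repeatedly along a fixed word expression for each relator $w\in E$ (and using $d(f_\ell(1_\Gamma),\id)=d(f_\ell(1_\Gamma),f_\ell(1_\Gamma)^2)\to 0$) shows $w(\ol\sigma^{(\ell)})\to\id$, so $\mu(\ol\sigma^{(\ell)})\to 0$. By \cref{enum:StabilityCombinatorial} there are genuine solutions $\ol\tau^{(\ell)}\in\Sol_E(n_\ell)$ with $\sum_i d(\sigma_i^{(\ell)},\tau_i^{(\ell)})\to 0$; each $\ol\tau^{(\ell)}$ defines a bona fide homomorphism $h_\ell\colon\Gamma\to\Sym(n_\ell)$ via $h_\ell(s_i)=\tau_i^{(\ell)}$, well defined because $\ol\tau^{(\ell)}$ satisfies all relators. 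Closeness on generators is immediate, and closeness on an arbitrary $\gamma$ follows by writing $\gamma$ as a word and combining the asymptotic-homomorphism property of $f_\ell$ with subadditivity of $d$.

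The substance lies in \cref{enum:StabilityByHoms}$\Rightarrow$\cref{enum:StabilityCombinatorial}, which I would prove by contradiction. If \cref{enum:StabilityCombinatorial} fails there is $\eps_0>0$ and tuples $\ol\sigma^{(\ell)}$ with $\mu(\ol\sigma^{(\ell)})<1/\ell$ yet $\ol\sigma^{(\ell)}\in\Sol_E^{\ge\eps_0}$. A pigeonhole argument forces the dimensions $n_\ell$ to be unbounded (a tuple of fixed dimension repeating infinitely often would have $\mu=0$, i.e.\ lie in $\Sol_E$), so after passing to a subsequence $n_\ell\to\infty$. I would then promote $\ol\sigma^{(\ell)}$ to a map $f_\ell\colon\Gamma\to\Sym(n_\ell)$ by fixing a word representative $w_\gamma\in F_S$ for each $\gamma\in\Gamma$ (with $w_{s_i}=s_i$) and setting $f_\ell(\gamma)=w_\gamma(\ol\sigma^{(\ell)})$. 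The crux is verifying that $f_\ell$ is an asymptotic homomorphism: for fixed $\gamma_1,\gamma_2$, the word $w_{\gamma_1\gamma_2}^{-1}w_{\gamma_1}w_{\gamma_2}$ lies in the normal closure of $E$, hence is a product of some number $t=t(\gamma_1,\gamma_2)$ of conjugates of relators and their inverses; bi-invariance and subadditivity then give $d(f_\ell(\gamma_1\gamma_2),f_\ell(\gamma_1)f_\ell(\gamma_2))\le t\cdot\max_{w\in E}d(w(\ol\sigma^{(\ell)}),\id)\le t\,|E|\,\mu(\ol\sigma^{(\ell)})$, which tends to $0$ because $t$ does not depend on $\ell$. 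Applying \cref{enum:StabilityByHoms} yields homomorphisms $h_\ell$ with $d(f_\ell(s_i),h_\ell(s_i))\to 0$; since $(h_\ell(s_1),\dots,h_\ell(s_k))\in\Sol_E(n_\ell)$, this places $\ol\sigma^{(\ell)}$ in $\Sol_E^{<\eps_0}$ for large $\ell$, contradicting the choice of $\ol\sigma^{(\ell)}$. I expect the main obstacle to be exactly this step---arranging the word representatives and exploiting the finiteness of $E$ (equivalently, finite presentability of $\Gamma$) so that the conjugate-count $t$ stays uniform in $\ell$.
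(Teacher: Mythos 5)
Your proposal is correct and follows exactly the route the paper indicates: the equivalence of \cref{enum:StabilityBySAS} and \cref{enum:StabilityCombinatorial} via the identity between the left-hand side of \eqref{eq:AlmostSolution} and the single-sample rejection probability of \SAS{}, and the equivalence of \cref{enum:StabilityCombinatorial} and \cref{enum:StabilityByHoms} via the standard dictionary between approximate solutions and asymptotic homomorphisms (the paper defers this to \cite{Ioana}, Lem.\ 3.1). You have simply written out in full the details the paper cites, including the key uniformity point that the number of conjugates of relators needed to express $w_{\gamma_1\gamma_2}^{-1}w_{\gamma_1}w_{\gamma_2}$ is independent of $\ell$.
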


\begin{proof}
The equivalence of \cref{enum:StabilityBySAS,enum:StabilityCombinatorial} is straightforward consequence (cf. \cite{BeckerMosheiff,BLM}) of the fact that the left-hand side of \eqref{eq:AlmostSolution} is equal to the probability that \SAS{} with word set $E$ and repetition factor $1$ rejects $\ol\sigma$. The equivalence of \cref{enum:StabilityCombinatorial,enum:StabilityByHoms} is also well known (see, e.g., \cite[Lem.\ 3.1]{Ioana}).
\end{proof}
\begin{definition}\label{def:stability}~
\begin{enumerate}
    \item\label{enum:FiniteSubsetStable} A finite subset $E$ of $F_S$ is \deffont{stable} (in permutations) if it satisfies the equivalent conditions stated in \cref{prop:stability-defs}.
    \item A group $\Gamma$ (not necessarily finitely presented) that satisfies \cref{enum:StabilityByHoms} of \cref{prop:stability-defs} is said to be
        \deffont{stable} (in permutations). 
\end{enumerate}
\end{definition}
A tuple of permutations $\ol \sigma$ that satisfies \eqref{eq:AlmostSolution} can be thought of as an \emph{approximate solution} for $E$. Hence, essentially, $E$ is stable if every approximate solution for $E$ is close a solution. In \cref{rem:GroupPropertyInfinite}, we will see that the notion of a stable subset $E$ of $F_S$ (\cref{def:stability}(\ref{enum:FiniteSubsetStable})) can also be defined without assuming that $E$ is finite.  
\begin{remark}\label{rem:finitelyPresentedStable}
Let $E$ be a finite subset of $F_S$. By \cref{prop:stability-defs},
$E$ is stable if and only if the same is true for the group $\inang{S\mid E}$, transforming the algorithmic question of the stability of $E$ into a group-theoretic question
about the stability of the finitely-presented group $\inang{S\mid E}$.
\end{remark}

By \cref{def:stability} and \cref{prop:stability-defs}(\ref{enum:StabilityBySAS}), every stable subset of $F_S$ is testable.
We will show below that the converse is not true.
In other words, there are subsets of $F_S$ which are testable, but require a different sort of separator than \SAS{}. For example, $E_{m,n}=\inset{\varX\varY^m\varX^{-1}\varY^{-n}}$ (with $S = \inset{\varX,\varY}$)), corresponding to the equation $\varX\varY^m=\varY^n\varX$, is such a system whenever $m\geq 2$ and $n\geq 2$ are co-prime integers (see \cref{sec:examples}).
At this point, the reader may wonder whether every system of equations is testable.
However, as we shall see, when $m=n\geq 2$, the set $E_{m,n}$ is non-testable. The proof of non-testability relies on a universality property, which we discuss below.

We present another algorithm, named \LSM, which is a \emph{universal} separator. Namely, in contrast to \SAS, every testable finite subset of $F_S$ can be tested by \LSM{} separators. To define \LSM{} we need some notation. Given two distributions $\theta_1$ and $\theta_2$ over a finite set $\Omega$, denote their \deffont{total-variation distance} by 
$$\dTV(\theta_1,\theta_2) = \frac 12 \sum_{x\in \Omega}\inabs{\theta_1(x)-\theta_2(x)}\eperiod$$
Given $\ol\sigma\in \Sym(n)^k$ and $x\in [n]$, let $$\stab_{\ol\sigma}(x) = \inset{w\in F_S\mid w(\ol\sigma)x = x}\eperiod$$
Given a finite set $P\subseteq F_S$, let $N_{\ol \sigma,P}$ denote the distribution, over the power set $2^P$, of $\stab_{\ol\sigma}(x) \cap P$, where $x$ is sampled uniformly from $[n]$. 

\begin{remark}\label{rem:StabEncodesBalls}
For intuition, consider the case where $P$ consists of all words in $F_S$ of length at most some fixed even $r\in \N$. Then, the set $\stab_{\ol \sigma}(x)\cap P$ yields a description (up to vertex relabeling) of the ball of radius $r/2$ centered at the vertex $x$ in the graph $G_{\ol \sigma}$. Hence, $N_{\ol \sigma, P}$ encodes the distribution of the isomorphism class (as a directed edge-labelled rooted graph) of a ball of radius $r/2$ in $G_{\ol \sigma}$, centered at a vertex sampled uniformly from the set of vertices of $G_{\ol\sigma}$.
\end{remark}

\begin{algorithm}[H]\caption{\label{alg:LSM}
		\textsf{Local Statistics Matcher} for $E$ with repetition factor $s\in \N$, finite word set $P\subset F_S$ and proximity parameter $\delta > 0$\\
		\textbf{Input:} $n\in\N$ and $\ol\sigma\in \Sym(n)^k$}\begin{algorithmic}[1]
		
 		\State\label[step]{step:LSM-sampling}Sample $x_{1},\dotsc,x_{s}$ uniformly
 		and independently from $\left[n\right]$.
		
 		\State For each $1\le j\le s$, compute the set $\stab_{\ol\sigma}\left(x_{j}\right)\cap P$
 		by querying $\ol\sigma$.
		
 		\State Let $N_{\ol\sigma,P}^{\Emp}$ be the distribution of $\stab_{\ol\sigma}\left(x_{j}\right)\cap P$
 		where $j$ is sampled uniformly from $\left[s\right]$.
		
 		\If{ 
 			\begin{equation}
 				\min\left\{ d_{\TV}\left(N_{\ol\sigma,P}^{\Emp},N_{\ol\tau,P}\right)\mid \ol\tau\in\Sol_{E}(n) \right\} \le\delta\label{eq:LSMCondition}
 			\end{equation}
 		} 
	
 		\State Accept.
		
 		\Else\State Reject.\EndIf
	\end{algorithmic}
\end{algorithm}
Note that \cref{step:LSM-sampling} can be implemented by computing $w(\ol\sigma)(x)$ for each $w\in P$. Thus \LSM{} makes at most $C\cdot s$ queries, $C=C(E)$. However, the time complexity of \LSM{} might be higher due to the computation involved in \cref{step:LSM-sampling}. This is unlike the case of \SAS{}, where the time complexity is proportional to the query complexity. It is an interesting problem to find implementations of \cref{step:LSM-sampling} with small time complexity, see \cref{sec:TimeComp}.

Note that the distribution $N_{\ol\sigma,P}^{\Emp}$, computed during a run of \LSM{}, is an approximation of the distribution $N_{\ol \sigma, P}$, where the former is obtained by taking $s$ independent samples from the latter. The quality of this approximation improves as the parameter $s$ grows. Using $N_{\ol\sigma,P}^{\Emp}$, checking whether Inequality \eqref{eq:LSMCondition} holds is an attempt to determine whether $N_{\ol \sigma, P}$ is close to the distribution $N_{\ol \tau, P}$ for some  solution $\ol\tau$ for $E$.

Before formulating our claim about the universality of \LSM{} (\cref{thm:LSMUniversal}), we introduce a geometric-combinatorial definition, which will turn out to be the main proxy through which we study testability in the rest of this paper.
\begin{definition}\label{def:BS-rigid}
	A subset $E$ of $F_S$ is \deffont{Benjamini--Schramm-rigid}\footnote{In \cite{BLM}, the notion of \emph{Benjamini--Schramm-rigidity} is termed \emph{statistical-distinguishability}.} (or \deffont{BS-rigid} for short) if for every $\eps > 0$ there exist a finite set $P = P(\eps)\subseteq F_S$ and $\delta = \delta(\eps) > 0$ such that $\dTV\inparen{N_{\ol \sigma,P}, N_{\ol \tau,P}} \ge \delta$ for every $n\in \N$, $\ol \sigma \in \Sol_{E}^{\ge \eps}(n)$ and $\ol \tau \in \Sol_{E}(n)$. In this case we also say that $E$ is $\inparen{P(\eps),\delta(\eps)}$-BS-rigid.
\end{definition}

\begin{theorem}[{\cite[Thm.\ 3]{BLM}}, Universality of \LSM]
\label{thm:LSMUniversal}
The following conditions are equivalent for a finite subset $E$ of $F_S$.
	\begin{enumerate}
		\item \label{enum:UniversalityTestable}$E$ is testable.
		\item \label{enum:UniversalityLSM} For every $\eps > 0$ there exist $s\in \N$, a finite set $P\subseteq F_S$ and $\delta > 0$ such that \LSM{} with repetition factor $s$, word set $P$ and proximity parameter $\delta$ is a $\inparen{\Sol_E, \Sol_E^{\ge \eps}}$-separator.
		\item \label{enum:UniversalityRigid} $E$ is BS-rigid.
	\end{enumerate}
\end{theorem}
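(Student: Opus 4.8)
The plan is to prove the cycle $(\ref{enum:UniversalityRigid}) \Rightarrow (\ref{enum:UniversalityLSM}) \Rightarrow (\ref{enum:UniversalityTestable}) \Rightarrow (\ref{enum:UniversalityRigid})$. The implication $(\ref{enum:UniversalityLSM}) \Rightarrow (\ref{enum:UniversalityTestable})$ is immediate: the stipulated \LSM{} separator makes at most $C(E)\cdot s$ queries with $s$ independent of $n$, so it witnesses separability of $\inparen{\Sol_E, \Sol_E^{\ge\eps}}$ for every $\eps$, which is exactly testability. The two substantive directions are $(\ref{enum:UniversalityRigid}) \Rightarrow (\ref{enum:UniversalityLSM})$ (rigidity makes \LSM{} succeed) and $(\ref{enum:UniversalityTestable}) \Rightarrow (\ref{enum:UniversalityRigid})$ (any efficient separator forces a local-statistics gap); the latter is the crux.

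For $(\ref{enum:UniversalityRigid}) \Rightarrow (\ref{enum:UniversalityLSM})$, fix $\eps$ and let $P = P(\eps)$, $\delta = \delta(\eps)$ be the BS-rigidity parameters. I would run \LSM{} with this $P$, proximity parameter $\delta/3$, and repetition factor $s$ large enough that $\dTV\inparen{N_{\ol\sigma,P}^{\Emp}, N_{\ol\sigma,P}} \le \delta/3$ with probability at least $0.99$; this is achievable with $s$ independent of $n$ because $N_{\ol\sigma,P}$ is supported on the fixed finite set $2^P$, so empirical concentration applies uniformly in $n$. Completeness follows by taking $\ol\tau = \ol\sigma$ in \eqref{eq:LSMCondition}: for $\ol\sigma \in \Sol_E(n)$ the minimum there is at most $\dTV\inparen{N_{\ol\sigma,P}^{\Emp}, N_{\ol\sigma,P}} \le \delta/3$. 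For soundness, if $\ol\sigma \in \Sol_E^{\ge\eps}(n)$ then every $\ol\tau \in \Sol_E(n)$ satisfies $\dTV\inparen{N_{\ol\sigma,P}, N_{\ol\tau,P}} \ge \delta$ by rigidity, so on the high-probability event the triangle inequality gives $\dTV\inparen{N_{\ol\sigma,P}^{\Emp}, N_{\ol\tau,P}} \ge \delta - \delta/3 > \delta/3$ for all $\ol\tau$, and \LSM{} rejects.

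The heart is $(\ref{enum:UniversalityTestable}) \Rightarrow (\ref{enum:UniversalityRigid})$, which I would prove by contraposition. Assume $E$ is not BS-rigid, witnessed by some fixed $\eps_0$, and let $\cM$ be an arbitrary $\inparen{\Sol_E, \Sol_E^{\ge\eps_0}}$-separator of query complexity $q$. First symmetrize: let $\cM'$ sample a uniform $\pi\in\Sym(n)$ and run $\cM$ on $\inparen{\pi\sigma_i\pi^{-1}}_i$. Since $\Sol_E(n)$ and $\Sol_E^{\ge\eps}(n)$ are invariant under simultaneous conjugation (conjugation preserves both $w(\ol\sigma)=\id$ and the Hamming metric), $\cM'$ is again a separator with query complexity $q$, and now every query to a not-yet-seen vertex lands on a uniformly random vertex. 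The key locality lemma I would establish is that, with $P = \inset{w\in F_S \mid |w|\le 2q}$, $\inabs{\PR{\cM' \text{ accepts } \ol\sigma} - \PR{\cM' \text{ accepts } \ol\tau}} \le q\cdot\dTV\inparen{N_{\ol\sigma,P}, N_{\ol\tau,P}} + O(q^2/n)$ for all $\ol\sigma,\ol\tau\in\Sym(n)^k$. Indeed, a $q$-query exploration from any root stays inside the radius-$q$ ball of that root, which by \cref{rem:StabEncodesBalls} is exactly the information recorded by $\stab_{\ol\sigma}(x)\cap P$; since fresh queries hit uniform vertices, the algorithm's entire view is distributed as at most $q$ near-independent samples from $N_{\ol\sigma,P}$, the only deviation from exact independence being vertex/ball collisions, which occur with probability $O(q^2/n)$. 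Coupling the two executions through this sampling description yields the bound.

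Finally I would combine the locality lemma with the failure of rigidity. Fix the finite set $P$ above and a tiny $\delta$; negating \cref{def:BS-rigid} produces, for $\eps_0$, tuples $\ol\sigma\in\Sol_E^{\ge\eps_0}(n)$ and $\ol\tau\in\Sol_E(n)$ on the same $n$ with $\dTV\inparen{N_{\ol\sigma,P}, N_{\ol\tau,P}} < \delta$. To neutralize the $O(q^2/n)$ error I must force $n$ large: if all such witnesses had bounded $n$, finiteness of $\Sym(n)^k$ would force an \emph{exact} coincidence $N_{\ol\sigma,P}=N_{\ol\tau,P}$ at some bounded $n$, and replacing $\ol\sigma,\ol\tau$ by $t$ disjoint copies (which leaves local statistics unchanged, preserves membership in $\Sol_E$, and preserves $\eps_0$-farness) sends $n\to\infty$ with the statistics still equal. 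Either way I obtain witnesses with $n$ arbitrarily large and $\dTV$ arbitrarily small; choosing $\delta < 1/(4q)$ and $n$ with $O(q^2/n) < 1/4$, the locality lemma gives $\inabs{\PR{\cM' \text{ accepts } \ol\sigma} - \PR{\cM' \text{ accepts } \ol\tau}} < 3/4$, contradicting the separation gap of $0.98$ between $\ol\tau\in\Sol_E$ and $\ol\sigma\in\Sol_E^{\ge\eps_0}$. The main obstacle is precisely the locality lemma with its uniform-in-$n$ bookkeeping—showing rigorously that an adaptive bounded-query algorithm extracts only $O(q^2/n)$ worth of information beyond the bounded-radius ball statistics, via a coupling of the two executions valid for all $n$. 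The secondary subtlety, which I would isolate as its own lemma, is that $\eps_0$-farness from $\Sol_E$ is preserved under disjoint unions.
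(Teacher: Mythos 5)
The paper itself does not reprove this theorem (it defers to \cite[Thm.\ 3]{BLM}), so I am judging your proposal against the argument that proof must contain. Your directions $(3)\Rightarrow(2)\Rightarrow(1)$ are correct and standard, and your strategy for $(1)\Rightarrow(3)$ — symmetrize by a uniformly random conjugation, then prove a locality lemma saying a $q$-query algorithm's acceptance probability is determined up to $q\cdot\dTV\inparen{N_{\ol\sigma,P},N_{\ol\tau,P}}+O(q^2/n)$ by the ball statistics for $P=\inset{w:|w|\le 2q}$ — is the right one; the paper's own footnote, noting that the proof of the theorem only uses invariance of $\Sol_E$ and $\Sol_E^{\ge\eps}$ under relabeling of $[n]$, confirms that this symmetrization is the engine of the real proof. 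The locality lemma is the bulk of the technical work and you only sketch it, but the sketch is believable.

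The genuine gap is the lemma you defer at the very end: that $\eps_0$-farness from $\Sol_E$ is preserved under disjoint unions. As stated this is false. Take $E=\inset{\varX^6}$ and $\sigma=(1\,2\,3\,4)\in\Sym(4)$: since $\Sol_E(4)$ consists of the permutations of order in $\inset{1,2,3}$ and distinct permutations differ in at least two points, $\sigma$ is exactly $\tfrac12$-far from $\Sol_E(4)$; but $\sigma\sqcup\sigma=(1\,2\,3\,4)(5\,6\,7\,8)$ agrees with the solution $(1\,2\,3\,4\,5\,6)(7\,8)\in\Sol_E(8)$ in five of eight points, so it is only $\tfrac38$-far. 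Farness can thus strictly decrease under amplification, and what you actually need — a lower bound on $d\inparen{\ol\sigma^{\sqcup t},\Sol_E(tn)}$ that is positive uniformly in $t$ — is a nontrivial claim that your argument neither proves nor can easily prove (the natural attempt only shows that the \emph{local statistics} of $\ol\sigma$ are approximable by those of solutions, which is precisely the hypothesis you are trying to refute, so the reasoning becomes circular). The good news is that the detour is avoidable: negate BS-rigidity along a sequence $P_\ell\nearrow F_S$, $\delta_\ell\to 0$, obtaining witnesses $\ol\sigma^{(\ell)}\in\Sol_E^{\ge\eps_0}(n_\ell)$, $\ol\tau^{(\ell)}\in\Sol_E(n_\ell)$ with $\dTV\inparen{N_{\ol\sigma^{(\ell)},P},N_{\ol\tau^{(\ell)},P}}\to 0$ for \emph{every} finite $P$ (this is exactly the reformulation in \cref{lem:disting-via-seqs}, via \cref{lem:TV-information-loss}). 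If $n_\ell$ were bounded, pigeonhole would give a fixed pair $\ol\sigma^*,\ol\tau^*$ on a fixed $n^*$ with $N_{\ol\sigma^*,P}=N_{\ol\tau^*,P}$ for all finite $P$; taking $P$ to contain all words of length $\le 2n^*$ shows the two $F_S$-actions have the same multiset of orbit types, hence $\ol\sigma^*$ and $\ol\tau^*$ are simultaneously conjugate, so $\ol\sigma^*\in\Sol_E(n^*)$, contradicting $\eps_0$-farness. Thus $n_\ell\to\infty$ may be assumed outright, and your locality lemma then finishes the contrapositive with no disjoint-union lemma needed.
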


\cref{thm:LSMUniversal}, reduces the algorithmic question of testability of a finite subset $E$ of $F_S$ to the equivalent geometric question of whether $E$ is BS-rigid.
Henceforth, we focus on BS-rigidity (without assuming that $E$ is finite).
We start by proving an analogue of \cref{prop:stability-defs} for BS-rigidity (rather than stability).

For a group $\Gamma$, a function $f\colon\Gamma\to\Sym(n)$ and $x\in[n]$, let $\stab_f(x)=\inset{\gamma\in\Gamma\mid f(\gamma)x=x}$ (note that $\stab_f(x)$ is usually not a group).
For a finite subset $Q$ of $\Gamma$, let $N_{f,Q}$ be the probability distribution, over the power set $2^Q$, of $\stab_f(x)\cap Q$ where $x$ is sampled uniformly from $[n]$.

\begin{prop}
\label{prop:BS-rigidViaHoms}
Let $E\subseteq F_S$ and write $\Gamma=\langle S \mid E \rangle$. The following conditions are equivalent.
\begin{enumerate}
    \item $E$ is BS-rigid.
    \item\label[condition]{enum:BS-rigidityByHoms}
    For every sequence of functions $\inparen{f_\ell\colon\Gamma\to \Sym(n_\ell)}_{\ell=1}^\infty$, $n_\ell\to\infty$, such that
    $$d\inparen{f_\ell(\gamma_1\gamma_2),f_\ell(\gamma_1)f_\ell(\gamma_2)}\to_{\ell\to \infty} 0 \qquad\forall \gamma_1,\gamma_2\in \Gamma\ecomma$$
    if there is a sequence of homomorphisms $\inparen{g_\ell\colon\Gamma\to \Sym(n_\ell)}_{\ell=1}^\infty$, such that
    $$d_{\TV}(N_{f_\ell,Q},N_{g_\ell,Q})\to_{\ell\to\infty} 0\qquad\forall Q\subset\Gamma, |Q|<\infty$$
    then there is a sequence of homomorphisms $\inparen{h_\ell\colon\Gamma\to \Sym(n_\ell)}_{n_\ell=1}^\infty$ such that
    $$d\inparen{f_{\ell}(\gamma),h_{\ell}(\gamma)}\to_{\ell\to\infty} 0\qquad\forall \gamma\in \Gamma\eperiod$$
\end{enumerate}
\end{prop}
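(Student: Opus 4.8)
The plan is to transport both conditions to the language of permutation tuples through a fixed set-theoretic section $\gamma\mapsto w_\gamma$ of the quotient $F_S\to\Gamma$, and then to invoke \cref{def:BS-rigid} (or its contrapositive) on the permutation side. For $f\colon\Gamma\to\Sym(n)$ write $\ol\sigma_f=\inparen{f(s_1),\dotsc,f(s_k)}$ for the tuple of its values on the generators, and for $w\in F_S$ let $[w]\in\Gamma$ be its image. I will use the standard dictionary underlying \cref{prop:stability-defs} (see \cite{BeckerMosheiff,BLM} and \cite[Lem.\ 3.1]{Ioana}): an asymptotic homomorphism $f_\ell$ has $\ol\sigma_{f_\ell}$ an approximate solution, with $d\inparen{f_\ell([w]),w\inparen{\ol\sigma_{f_\ell}}}\to 0$ for every fixed $w$; conversely an approximate solution $\ol\sigma^{(\ell)}$ gives the asymptotic homomorphism $\gamma\mapsto w_\gamma\inparen{\ol\sigma^{(\ell)}}$, an honest homomorphism when $\ol\sigma^{(\ell)}\in\Sol_E$; and a generator bound $\sum_i d\inparen{\sigma_i,\rho_i}<\eps$ propagates to $d\inparen{w(\ol\sigma),w(\ol\rho)}\le|w|\,\eps$ for each fixed $w$. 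The bridge between the two flavours of local statistics is the refinement-by-image map: for finite $P\subseteq F_S$ put $Q=[P]\subseteq\Gamma$ and $\Psi\colon 2^Q\to 2^P$, $\Psi(B)=\inset{w\in P\mid[w]\in B}$. For a genuine solution $\ol\tau$ one has $w\in\stab_{\ol\tau}(x)\iff[w]\in\stab_g(x)$, so $N_{\ol\tau,P}=\Psi_\ast N_{g,Q}$ exactly, while for an approximate solution $d\inparen{w\inparen{\ol\sigma^{(\ell)}},f_\ell([w])}\to 0$ forces $\stab_{\ol\sigma^{(\ell)}}(x)\cap P=\Psi\inparen{\stab_{f_\ell}(x)\cap Q}$ for all but an $o(1)$-fraction of $x$, i.e.\ $\dTV\inparen{N_{\ol\sigma^{(\ell)},P},\Psi_\ast N_{f_\ell,Q}}\to 0$. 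Since pushing forward never increases $\dTV$ and preserves it when $P$ is a transversal (so $\Psi$ is a bijection), matching of $\Gamma$-statistics along $(f_\ell,g_\ell)$ for all finite $Q$ is equivalent to matching of $F_S$-statistics along $(\ol\sigma_{f_\ell},\ol\sigma_{g_\ell})$ for all finite $P$.

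For (BS-rigid)$\,\Rightarrow\,$\cref{enum:BS-rigidityByHoms}, take $f_\ell,g_\ell$ as in the statement and set $\ol\sigma^{(\ell)}=\ol\sigma_{f_\ell}$, $\ol\tau^{(\ell)}=\ol\sigma_{g_\ell}\in\Sol_E(n_\ell)$. The equivalence above turns the hypothesis into $\dTV\inparen{N_{\ol\sigma^{(\ell)},P},N_{\ol\tau^{(\ell)},P}}\to 0$ for every finite $P\subseteq F_S$. Fixing $\eps>0$ and taking $P(\eps),\delta(\eps)$ from \cref{def:BS-rigid}, for all large $\ell$ we get $\dTV\inparen{N_{\ol\sigma^{(\ell)},P(\eps)},N_{\ol\tau^{(\ell)},P(\eps)}}<\delta(\eps)$, so the contrapositive of BS-rigidity puts $\ol\sigma^{(\ell)}$ in $\Sol_E^{<\eps}(n_\ell)$, within $\eps$ of a solution $\ol\rho^{(\ell)}$. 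Sweeping the scales $\eps=1/m$ and choosing $L_1<L_2<\dotsb$ so that $\ol\sigma^{(\ell)}$ is $1/m$-close to a solution $\ol\rho^{(\ell)}$ whenever $\ell\ge L_m$, I let $h_\ell$ be the homomorphism $\gamma\mapsto w_\gamma\inparen{\ol\rho^{(\ell)}}$; the propagation bound with the dictionary then yields $d\inparen{f_\ell(\gamma),h_\ell(\gamma)}\to 0$ for every $\gamma$, as required.

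For the converse I argue contrapositively. If $E$ is not BS-rigid there is a fixed $\eps_0>0$ such that, with $P_m$ the radius-$m$ ball in $F_S$ and $\delta_m=1/m$, there exist $\ol\sigma^{(m)}\in\Sol_E^{\ge\eps_0}(n_m)$ and $\ol\tau^{(m)}\in\Sol_E(n_m)$ with $\dTV\inparen{N_{\ol\sigma^{(m)},P_m},N_{\ol\tau^{(m)},P_m}}<1/m$. Every relator $w\in E$ lies in $\stab_{\ol\tau^{(m)}}(x)$ for all $x$, so comparing the marginal probability of containing $w$ gives $d\inparen{w\inparen{\ol\sigma^{(m)}},\id}<1/m$ once $w\in P_m$; hence the $\ol\sigma^{(m)}$ are approximate solutions and the dictionary makes $f_m\colon\gamma\mapsto w_\gamma\inparen{\ol\sigma^{(m)}}$ an asymptotic homomorphism and $g_m\colon\gamma\mapsto w_\gamma\inparen{\ol\tau^{(m)}}$ an honest homomorphism with $\dTV\inparen{N_{f_m,Q},N_{g_m,Q}}\to 0$ for all finite $Q$. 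I may assume $n_m\to\infty$: otherwise finiteness of $\Sym(n)^k$ produces a single pair with $N_{\ol\sigma,P}=N_{\ol\tau,P}$ for all finite $P$, and the same relator comparison then makes $\ol\sigma$ an exact solution, contradicting $\ol\sigma\in\Sol_E^{\ge\eps_0}$. Thus $(f_m),(g_m)$ satisfy every hypothesis of \cref{enum:BS-rigidityByHoms}; yet no homomorphisms $h_m$ can obey $d\inparen{f_m(\gamma),h_m(\gamma)}\to 0$, since evaluating at the generators would place $\ol\sigma^{(m)}$ within $o(1)$ of the solution $\ol\sigma_{h_m}$, contradicting $\ol\sigma^{(m)}\in\Sol_E^{\ge\eps_0}(n_m)$. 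Hence \cref{enum:BS-rigidityByHoms} fails.

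The step I expect to require the most care is the dictionary of the first paragraph—checking that a fixed word acts, for an improving sequence of approximate solutions, asymptotically like its image under $f_\ell$, so that the image map $\Psi$ genuinely intertwines the $F_S$- and $\Gamma$-local statistics up to $o(1)$ in total variation, and that this intertwining preserves $\dTV$ on transversals while only contracting it in general. The one ingredient with no counterpart in the stability dictionary of \cref{prop:stability-defs} is the relator comparison in the converse: matching local statistics against an honest solution already forces the approximate-solution property, and this is exactly what makes the extra homomorphism hypothesis in \cref{enum:BS-rigidityByHoms} the right one.
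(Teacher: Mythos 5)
Your proof is correct and takes essentially the same route as the paper's (which proves the flexible generalization, \cref{prop:BS-rigidViaHomsFlex}, and specializes to $\nu=0$): a set-theoretic section of $F_S\to\Gamma$ to pass between functions on $\Gamma$ and permutation tuples, the identification of $\Gamma$-local statistics with $F_S$-local statistics up to an $o(1)$ error supplied by the asymptotic-homomorphism property, monotonicity of $\dTV$ under shrinking the word set, and diagonalization over $\eps$. The only cosmetic differences are that the paper verifies the asymptotic-homomorphism property of $f_m$ in the converse by applying the statistics comparison directly to the word $p(\gamma_1\gamma_2)p(\gamma_2)^{-1}p(\gamma_1)^{-1}\in\ker\pi$ rather than routing through the approximate-solution property, and that you make explicit the reduction to $n_m\to\infty$, which the paper leaves implicit.
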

\cref{prop:BS-rigidViaHoms} is a special case of \cref{prop:BS-rigidViaHomsFlex} below. The latter is proved in \cref{sec:ProofBS-rigidViaHoms}.
\begin{definition}
A group $\Gamma$ is \deffont{BS-rigid} if it satisfies \cref{enum:BS-rigidityByHoms} of \cref{prop:BS-rigidViaHoms}.
\end{definition}
\begin{remark}
In light of the equivalence between (\ref{enum:UniversalityTestable}) and (\ref{enum:UniversalityRigid}) in \cref{thm:LSMUniversal}, one may be tempted to call a group satisfying \cref{enum:BS-rigidityByHoms} of \cref{prop:BS-rigidViaHoms} \emph{testable}. However, the interpretation of BS-rigidity as testability (\cref{thm:LSMUniversal}), while valid for a finite set $E$, might not be valid for an infinite $E$. Thus, we focus on the geometric notion of BS-rigidity instead of getting into subtle issues related to computability that may arise when studying testability of infinite sets.
\end{remark}
In \cref{sec:IRSFormulation} we give an equivalent definition for a BS-rigid group and a stable group, from the point of view invariant random subgroups.
The study of stability from this point of view has been fruitful
\cites{BLT,LevitLubotzky1,LevitLubotzky2,Zheng}, and we believe the same will be true for BS-rigidity.

A basic observation,
which follows at once from the way we presented the definitions and results above, is that the BS-rigidity
(resp. stability) of $E$ depends only on the isomorphism class of the group $\langle S \mid E \rangle$.
Namely,
\begin{prop}\label{prop:GroupProperty}
Fix finite sets $S_1$ and $S_2$,
and take finite sets $E_{1}\subset F_{S_1}$ and $E_{2}\subset F_{S_2}$
such that the groups $\langle S_1 \mid E_1 \rangle$ and $\langle S_2 \mid E_2 \rangle$ are isomorphic.
Then
\begin{enumerate}
\item \label{enum:GroupPropertyDistniguishable} \cite[Prop.\ 3.2]{BLM} $E_{1}$ is BS-rigid if and only if $E_{2}$ is BS-rigid.
\item \label{enum:GroupPropertyStable}\cite{ArzhantsevaPaunescu} $E_{1}$ is stable if and only if $E_{2}$
is stable.
\end{enumerate}
\end{prop}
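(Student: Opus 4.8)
The plan is to leverage the fact that both properties in question have already been expressed as conditions on the abstract group $\Gamma=\langle S\mid E\rangle$: by \cref{prop:stability-defs}, stability of $E_i$ is equivalent to $\Gamma_i=\langle S_i\mid E_i\rangle$ satisfying \cref{enum:StabilityByHoms}, and by \cref{prop:BS-rigidViaHoms}, BS-rigidity of $E_i$ is equivalent to $\Gamma_i$ satisfying \cref{enum:BS-rigidityByHoms}. Crucially, neither condition refers to the generators $S_i$ or the relators $E_i$: each speaks only of sequences of functions $f_\ell\colon\Gamma_i\to\Sym(n_\ell)$, the asymptotic-homomorphism defect, genuine homomorphisms $\Gamma_i\to\Sym(n_\ell)$, and---for BS-rigidity---the stabilizer distributions $N_{f_\ell,Q}$. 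It therefore suffices to show that each of these two conditions is invariant under group isomorphism; both parts of the proposition then follow immediately, since $\Gamma_1\cong\Gamma_2$ by hypothesis. I would fix an isomorphism $\phi\colon\Gamma_1\to\Gamma_2$ and verify invariance by transporting all the relevant data across $\phi$.

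For stability, the transport is by precomposition. If $f_\ell\colon\Gamma_2\to\Sym(n_\ell)$ is an asymptotic homomorphism, then so is $f_\ell\circ\phi\colon\Gamma_1\to\Sym(n_\ell)$, because $\phi$ is a homomorphism and hence $d\bigl((f_\ell\circ\phi)(\gamma\gamma'),(f_\ell\circ\phi)(\gamma)(f_\ell\circ\phi)(\gamma')\bigr)=d\bigl(f_\ell(\phi(\gamma)\phi(\gamma')),f_\ell(\phi(\gamma))f_\ell(\phi(\gamma'))\bigr)\to 0$ as $\phi(\gamma),\phi(\gamma')$ range over $\Gamma_2$. Assuming $\Gamma_1$ is stable, we obtain homomorphisms $h_\ell\colon\Gamma_1\to\Sym(n_\ell)$ with $d((f_\ell\circ\phi)(\gamma),h_\ell(\gamma))\to 0$; then $h_\ell\circ\phi^{-1}$ are homomorphisms $\Gamma_2\to\Sym(n_\ell)$ satisfying $d(f_\ell(\delta),(h_\ell\circ\phi^{-1})(\delta))\to 0$ for all $\delta\in\Gamma_2$, so $\Gamma_2$ is stable. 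Exchanging the roles of $\phi$ and $\phi^{-1}$ gives the reverse implication, proving part (2).

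For BS-rigidity the same precomposition handles the functions $f_\ell$, the homomorphisms $g_\ell$, and the output $h_\ell$; the one extra ingredient is the behaviour of the stabilizer distributions. The key identity is $\stab_{f\circ\phi}(x)=\phi^{-1}(\stab_f(x))$ for any $f\colon\Gamma_2\to\Sym(n)$ and $x\in[n]$, which holds because $f(\phi(\gamma))x=x$ exactly when $\phi(\gamma)\in\stab_f(x)$. Consequently, for a finite $Q\subseteq\Gamma_1$ the bijection $\phi$ carries $\stab_{f\circ\phi}(x)\cap Q$ onto $\stab_f(x)\cap\phi(Q)$, inducing a bijection $2^Q\to 2^{\phi(Q)}$ that identifies $N_{f\circ\phi,Q}$ with $N_{f,\phi(Q)}$ and preserves total-variation distance; thus $\dTV(N_{f_\ell\circ\phi,Q},N_{g_\ell\circ\phi,Q})=\dTV(N_{f_\ell,\phi(Q)},N_{g_\ell,\phi(Q)})$. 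Since $\phi(Q)$ ranges over all finite subsets of $\Gamma_2$ as $Q$ ranges over those of $\Gamma_1$, the $\dTV$-hypothesis of \cref{enum:BS-rigidityByHoms} transfers across $\phi$, and combining this with the stability-style transport of $f_\ell,g_\ell,h_\ell$ yields invariance of BS-rigidity, proving part (1).

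The argument is essentially bookkeeping and has no deep obstacle; the only point requiring genuine care is this middle identity for BS-rigidity, namely that $\phi$ must be tracked as carrying stabilizers to stabilizers so that the distributions $N_{f,Q}$ are matched over the correct power sets $2^{\phi(Q)}$ rather than being treated as literally unchanged. The finiteness of $E_1,E_2$ enters only to license the use of \cref{prop:stability-defs} in part (2), as \cref{prop:BS-rigidViaHoms} already covers arbitrary $E$.
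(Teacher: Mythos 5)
Your proposal is correct and follows essentially the same route as the paper, which derives part (1) from \cref{prop:BS-rigidViaHoms} and part (2) from the equivalence of conditions (1) and (3) in \cref{prop:stability-defs}, both of which characterize the property purely in terms of the abstract group $\Gamma$. You simply make explicit the isomorphism-invariance bookkeeping (transport by precomposition with $\phi$ and the identity $\stab_{f\circ\phi}(x)=\phi^{-1}(\stab_f(x))$) that the paper leaves implicit.
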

\begin{proof}
1) This claim is proved in \cite[Prop.\ 3.2]{BLM}. It also follows from \cref{prop:BS-rigidViaHoms}.

2) This was observed in \cite{ArzhantsevaPaunescu} (using Tietze transformations), and also follows from the equivalence of (1) and (3) in \cref{prop:stability-defs}.
\end{proof}

\begin{remark}\label{rem:GroupPropertyInfinite}
By \cref{prop:BS-rigidViaHoms}, \cref{prop:GroupProperty}(\ref{enum:GroupPropertyDistniguishable}) remains true even without the assumption that $E_1$ and $E_2$ are finite.

In fact, stability can also be defined for an infinite subset $E$ of $F_S$ (\cite[Lem.\ 3.1]{Ioana} or \cite[Def.\ 3.11]{BLT}). Under this definition, \cref{prop:GroupProperty}(\ref{enum:GroupPropertyStable}) remains true even without the assumption that $E_1$ and $E_2$ are finite. The algorithmic definition of stability of a (possibly infinite) set $E$ is: $E$ is \deffont{stable} if for every $\eps > 0$ there exist $s\in \N$ and a finite subset $E_0$ of $E$ such that \SAS{} with word set $E_0$ and repetition factor $s$ is a $\inparen{\Sol_E,\Sol_E^{\ge \eps}}$-separator.
\end{remark}

Motivated by the algorithmic interpretation of BS-rigidity as testability, we wish to develop an ensemble of tools which can check if a given
group is BS-rigid (resp. stable) or not. While stability in permutations has been studied extensively in recent years (see 
\cite{ArzhantsevaCherix,ArzhantsevaPaunescu,BeckerLubotzky,BLT,BeckerMosheiff,BowenBurton,GlebskyRivera,Ioana,LLM,LazarovichLevit,LevitLubotzky1,LevitLubotzky2,MoralesGlebsky,ThomICM,Zheng}, the present paper (together with its counterpart \cite{BLM}) is the first about testability and BS-rigidity.

The following theorem fully characterizes BS-rigidity and stability among finitely-generated amenable groups.
\begin{theorem}[BS-rigidity and stability of amenable groups]\label{thm:PositiveAmenable}
Let $\Gamma$ be a finitely-generated amenable group. Then,
\begin{enumerate}
\item\label{enum:PostiveAmenableBS-rigid} \cite{BLM} $\Gamma$ is BS-rigid.
\item \label{enum:PostiveAmenableStable}\cite{BLT} $\Gamma$ is stable if and only if every
invariant random subgroup of $\Gamma$ is co-sofic, i.e., a weak-$\ast$ limit of invariant random subgroups supported on finite-index subgroups.
\end{enumerate}
\end{theorem}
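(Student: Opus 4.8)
The two parts call for different machinery, and I would feed part (1) into part (2). For part (1) I would verify the form of BS-rigidity in \cref{def:BS-rigid} through its contrapositive: given $\eps>0$, I must exhibit a finite $P\subseteq F_S$ and $\delta>0$ such that if $\dTV(N_{\ol\sigma,P},N_{\ol\tau,P})<\delta$ for some genuine solution $\ol\tau\in\Sol_E(n)$ (where $\Gamma=\langle S\mid E\rangle$), then $\ol\sigma\in\Sol_E^{<\eps}(n)$. By \cref{rem:StabEncodesBalls}, choosing $P$ to be all words of length at most $r$ makes $N_{\ol\sigma,P}$ record the radius-$r/2$ neighborhood statistics of the colored graph $G_{\ol\sigma}$, so the hypothesis says that $G_{\ol\sigma}$ has nearly the same local statistics as the genuine Schreier graph $G_{\ol\tau}$. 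The crux is \emph{hyperfiniteness}: since $\Gamma$ is amenable, every graphing arising as a Benjamini--Schramm limit of finite Schreier graphs of $\Gamma$ is an amenable measured equivalence relation, hence hyperfinite by Connes--Feldman--Weiss. Because hyperfiniteness is continuous in the local statistics, this transfers to \emph{uniform} hyperfiniteness of the finite graphs $G_{\ol\sigma}$ whose statistics are close to those of genuine solutions: for every $\eps'>0$ there is $K$ so that deleting an $\eps'$-fraction of edges leaves components of size at most $K$.

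With uniform hyperfiniteness in hand, I would invoke the reconstruction principle for hyperfinite bounded-degree graph families (Newman--Sohler): within such a family the neighborhood statistics determine the graph up to $o(n)$ edge edits. Applied here, closeness of $N_{\ol\sigma,P}$ to $N_{\ol\tau,P}$ forces $G_{\ol\sigma}$ to lie within $o(n)$ edits of a genuine finite Schreier graph of $\Gamma$. The remaining --- and, I expect, hardest --- step is to upgrade this graph-level statement to the permutation level: one must repair $\ol\sigma$ on the $o(n)$ affected vertices so that each color class becomes a genuine permutation and \emph{every} relator in $E$ holds exactly, producing an honest $\Gamma$-action $\ol\tau'$ with $d(\ol\sigma,\ol\tau')<\eps$. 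Here the hyperfinite decomposition into nearly $\Gamma$-invariant finite pieces is what lets the repair be done locally on each piece and glued with negligible cost on the small boundary.

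For part (2) I would pass to the invariant-random-subgroup picture. A sequence of asymptotic homomorphisms $f_\ell\colon\Gamma\to\Sym(n_\ell)$ gives rise, through the distribution of the point stabilizers $\stab_{f_\ell}(x)$ for uniform $x$, to a sequence of invariant random subgroups whose weak-$\ast$ accumulation points are invariant random subgroups of $\Gamma$; genuine homomorphisms $\Gamma\to\Sym(n)$ produce invariant random subgroups supported on finite-index subgroups, and \emph{co-sofic} means a weak-$\ast$ limit of the latter. Using part (1), amenability makes $\Gamma$ BS-rigid, so closeness of $f_\ell$ in the sense of \eqref{eq:AsymptoticallyCloseFuncs} to a homomorphism is detected exactly by closeness of local statistics, that is, by the limiting invariant random subgroup being co-sofic. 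Thus $\Gamma$ is stable if and only if every invariant random subgroup arising as such a limit of asymptotic homomorphisms is co-sofic.

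It remains to identify which invariant random subgroups so arise, and this is where amenability enters a second time: for a finitely-generated amenable group \emph{every} invariant random subgroup is a Benjamini--Schramm limit of finite models, hence is realized by a sequence of almost-actions. I would prove this via the Ornstein--Weiss quasi-tiling and Rokhlin machinery, tiling a typical orbit of the given invariant random subgroup by translates of F{\o}lner sets to build finite approximate actions whose stabilizer statistics converge to it. Granting this, the quantifier ``every invariant random subgroup arising from asymptotic homomorphisms'' collapses to ``every invariant random subgroup,'' and combining with the previous paragraph yields that $\Gamma$ is stable if and only if every invariant random subgroup of $\Gamma$ is co-sofic. The main obstacle in part (2) is precisely this realization step, together with the careful weak-$\ast$ bookkeeping needed to keep soficity (being a limit of finite models) and co-soficity (being a limit of finite-\emph{index} models) cleanly separated throughout.
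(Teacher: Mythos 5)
Your plan follows essentially the same route as the paper: for part (1), deducing uniform hyperfiniteness of the approximating graphs from amenability (Ornstein--Weiss/Connes--Feldman--Weiss) and then applying Newman--Sohler reconstruction plus a local repair is precisely the pair of ingredients the paper names as underlying the proof from \cite{BLM}. For part (2), obtaining the ``every IRS co-sofic $\Rightarrow$ stable'' direction from part (1) via the invariant-random-subgroup reformulation is exactly \cref{prop:DistingusihableStableEquivUndercoSoficCondition}, and the converse via realizing every IRS of an amenable group as a limit of finite almost-actions by Ornstein--Weiss quasi-tilings matches the argument of \cite{BLT}.
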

In \cref{sec:IRS} we recall the relevant terminology regarding invariant random subgroups.
The proof of \cref{thm:PositiveAmenable} relies on a theorem of Ornstein--Weiss \cite{OrnsteinWeiss} (see also \cite{ConnesFeldmanWeiss}) and a theorem of Newman--Sohler \cite{NewmanSohler2013} (see also \cite{Elek2012}). \cref{prop:DistingusihableStableEquivUndercoSoficCondition} below explains how the first part of \cref{thm:PositiveAmenable} implies one direction of the if and only if statement in the second part.

\cref{thm:PositiveAmenable}(2) implies that every finitely-generated nilpotent (and even polycyclic) group is stable, and the same is true for the solvable Baumslag--Solitar groups $\BS(1,n)$, $n\in\N$ (see \cite[Theorem 1.2]{BLT}).
This includes, as a special case, the stability of $E=\inset{\varX\varY\varX^{-1}\varY^{-1}}$ (originally proved in \cite{ArzhantsevaPaunescu}).
 \cref{thm:PositiveAmenable} also enables us to find groups that are BS-rigid but not stable (and subsets of $F_S$ which are testable but not stable), see \cref{sec:examples}.
 
Moving on from amenable groups to the groups that are as far as possible from being amenable, \cref{thm:NegativeKazhdan} below is a negative result about BS-rigidity and stability of groups with Kazhdan's property $\T$. In fact, property $\ptau$ (see \cite{LubotzkyExpandersBook}) suffices.
\begin{theorem}[Negative results for groups with property $\ptau$]\label{thm:NegativeKazhdan}
Let $\Gamma$ be a finitely-generated group with property
$\ptau$. Then,
\begin{enumerate}
\item {\cite{BLM}} If $\Gamma$ has infinitely many finite-index subgroups,
then $\Gamma$ is not BS-rigid.
\item \cite{BeckerLubotzky} If $\Gamma$ is infinite and sofic, then it is not stable.
\end{enumerate}
\end{theorem}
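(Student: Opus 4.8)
The plan is to prove the two parts by opposite uses of the spectral gap supplied by property $\ptau$: in (1) I build a \emph{fake} homomorphism that no genuine one can approximate, while in (2) I exhibit an \emph{approximate solution} (coming from soficity) that no genuine homomorphism can approximate. For part (1) I aim to violate \cref{enum:BS-rigidityByHoms} of \cref{prop:BS-rigidViaHoms}. Since $\Gamma$ has infinitely many finite-index subgroups, pick finite-index normal $N_\ell\trianglelefteq\Gamma$ with $[\Gamma:N_\ell]\to\infty$, so that the coset actions give genuine homomorphisms $g_\ell\colon\Gamma\to\Sym([\Gamma:N_\ell])$ whose Cayley graphs form, by $\ptau$, a family of $\eps$-expanders. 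I then manufacture an asymptotic homomorphism $f_\ell$ on a bounded number $t$ of sheets $(\Gamma/N_\ell)\times[t]$ by letting $\Gamma$ act on the base as $g_\ell$ and permuting the sheets along a ``connection'' $\phi$ chosen so that: (i) the cocycle identities hold for all but an $o(1)$-fraction of base points, making $f_\ell$ an asymptotic homomorphism; (ii) $\phi$ is trivial on every ball of bounded radius, so $f_\ell$ has exactly the local statistics of the untwisted action $g_\ell\times\id_{[t]}$ (which is then the genuine sequence required by \cref{prop:BS-rigidViaHoms}); and (iii) the global holonomy of $\phi$ is nontrivial and robust. The expander hypothesis enters at step (iii): on an $\eps$-expander a nontrivial holonomy class cannot be trivialized by editing an $o(1)$-fraction of the edges, so no genuine $h_\ell$ can be Hamming-close to $f_\ell$ (a close genuine $h_\ell$ would yield a globally trivial connection after few edits). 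By \cref{prop:BS-rigidViaHoms} this shows $\Gamma$ is not BS-rigid.

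For part (2) I aim to violate \cref{enum:StabilityByHoms} of \cref{prop:stability-defs}. Soficity furnishes a sofic approximation: asymptotic homomorphisms $f_\ell\colon\Gamma\to\Sym(n_\ell)$ that are nearly free, i.e.\ $\tfrac1{n_\ell}\inabs{\Fix(f_\ell(\gamma))}\to0$ for every $1\ne\gamma\in\Gamma$; equivalently the stabilizer distributions $N_{f_\ell,Q}$ converge to those of the trivial subgroup. Crucially this input needs no finite quotients, so it is available even when $\Gamma$ is simple. I moreover tailor $f_\ell$ to carry a global ``non-expanding feature'' --- concretely an almost-invariant unit vector $\xi_\ell\perp\mathbf{1}$, equivalently a sparse nearly-balanced cut --- which is possible precisely because Benjamini--Schramm convergence to the (non-amenable) Cayley graph of $\Gamma$ constrains only the local picture, not the global expansion. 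If $\Gamma$ were stable, \cref{enum:StabilityByHoms} of \cref{prop:stability-defs} would yield genuine $h_\ell$ with $d(f_\ell(\gamma),h_\ell(\gamma))\to0$; then $h_\ell$ is itself nearly free, so each of its orbits is a large coset space $\Gamma/H_i$, and property $\ptau$ forces every orbit-Schreier graph to be an $\eps$-expander. Hence the only almost-invariant vectors of $h_\ell$ are, up to $o(1)$, constant on orbits. I then contradict stability by showing that the tailored feature of $f_\ell$, robust under the $o(1)$ perturbation, survives in $h_\ell$ yet cannot be written as a function constant on the expanding orbits.

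In both parts the technical heart is the same reconciliation between local freedom and global rigidity, and this is where I expect the main obstacle. In (1) it is the simultaneous achievement of (i)--(iii): an honest asymptotic homomorphism whose twist is invisible in every bounded ball yet globally un-removable --- exactly where the uniform spectral gap is used, via a quantitative estimate that a nontrivial holonomy on an $\eps$-expander is $\Omega(1)$-far from trivial. In (2) the delicate point is to rule out the cheap ``cut between orbits'' escape: a disconnected genuine action has sparse cuts for free, so one must choose the global feature of the sofic approximation so that any nearby genuine action is forced to place the cut \emph{across} a single orbit, where $\ptau$ prohibits it. Making this incompatibility robust under $o(1)$ Hamming perturbations, and thereby pinning the contradiction on property $\ptau$ rather than on any ad hoc structure, is the crux of the argument.
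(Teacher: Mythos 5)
Your construction for part (1) does not work as described, and the failure is structural rather than technical. If the twist $\phi$ makes $f_\ell$ an asymptotic (but not genuine) homomorphism while leaving all bounded balls untouched, then the set of points where $f_\ell(s)$ differs from the untwisted action $(g_\ell\times\id_{[t]})(s)$ has density $o(1)$ for each generator $s$. But $g_\ell\times\id_{[t]}$ is itself a genuine homomorphism, and closeness on generators propagates to closeness on every $\gamma\in\Gamma$ via the asymptotic-homomorphism property; so \cref{enum:BS-rigidityByHoms} of \cref{prop:BS-rigidViaHoms} is \emph{satisfied} by $h_\ell=g_\ell\times\id_{[t]}$, and nothing is violated. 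Your step (iii) --- that a nontrivial holonomy on an expander cannot be trivialized by editing an $o(1)$-fraction of edges --- is false in the Hamming metric relevant here: one trivializes it by editing precisely the sparsely supported twisted edges back to the identity. The only escape would be a twist supported on a positive fraction of edges, but then either it is an honest cocycle (so $f_\ell$ is a genuine homomorphism, namely the action on a $t$-fold cover) or the cocycle identity fails on a positive fraction of points and $f_\ell$ is not an asymptotic homomorphism at all. Note that expansion and property $\ptau$ in fact push \emph{against} you here, via cocycle-rigidity phenomena.

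The paper's proof avoids this trap by changing the \emph{cardinality}: it deletes one point from a transitive quotient $X_n=\Gamma/N_n$ and reroutes the edges through it, producing an almost-action $Y_n$ on $|X_n|-1$ points with the same Benjamini--Schramm limit. A nearby genuine action $Z_n$ would have to live on $|X_n|-1$ points, and the obstruction is representation-theoretic, not holonomic: the map $f_n\colon\C[Z_n]\to\C[X_n]$, $(x,0)\mapsto x$, is an almost-invariant vector of the unitary $\Gamma$-representation $\Hom_\C(\C[Z_n],\C[X_n])$ (which factors through a finite quotient), yet \cite[Prop.\ 2.4]{BeckerLubotzky} shows it is at distance at least $\tfrac{1}{\sqrt2}\|f_n\|$ from every invariant vector --- contradicting $\ptau$. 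Your construction preserves cardinality, so no such obstruction is available. Your sketch for part (2) (which the paper only cites) has an analogous unresolved gap: you acknowledge but do not close the ``cut between orbits'' escape, i.e., you must show your planted almost-invariant vector stays far from the span of the orbit-indicator functions of \emph{every} nearby genuine action, and the mechanism that achieves this in \cite{BeckerLubotzky} is again the deleted-point/dimension-count obstruction rather than a planted sparse cut.
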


In \cref{sec:ProofNegativeKazhdan} we prove \cref{thm:NegativeKazhdan}(1) using the construction of \cite{BLM} (and \cite{BeckerLubotzky}), but with a somewhat different argument via the language of invariant random subgroups.

\subsection{Stability and testability in the flexible model}\label{sec:flexModel}

\deffont{Flexible stability} and \deffont{flexible BS-rigidity} are important relaxations of stability and BS-rigidity, respectively. Flexible stability in permutations was originally defined in \cite{BeckerLubotzky} (cf.  \cite{GowersHatami,DOT}). Flexible BS-rigidity is a new concept, hinted at in \cite{BLM}, and formulated here. In order to define these notions, we need to define the distance between permutations on sets of different cardinalities, extending  \eqref{eq:PermutationMetric}: Given $\sigma \in \Sym(n)$ and $\tau \in \Sym(N)$ with $N\ge n$, let
$$
d(\sigma,\tau)=d(\tau,\sigma) = \frac{\inabset{x\in [n]\mid \sigma x\ne \tau x}}{n}\eperiod
$$
Fix a function $\nu:\R_{>0}\times \N\to \N\cup \{0,\infty\}$, which is monotone non-decreasing in the first argument. For a subset $E$ of $F_S$, let
$$
\Sol_E^{<\eps, \nu\flex}(n) = \inset{\ol\sigma\in \Sym(n)^k\mid \exists N\in \N~\exists \ol\tau \in \Sol_E(N), ~n\le N\le n+\nu(\eps,n)\text{ and } \sum_{i=1}^k d(\sigma_i,\tau_i)<\eps}
$$
and
$$\Sol_E^{\ge \eps, \nu\flex}(n) = \Sol_E(n) \setminus \Sol_E^{<\eps,\nu\flex}(n)\eperiod$$
Also,
$\Sol_E^{<\eps, \nu\flex} = \bigcup_{n\in \N} \Sol_E^{<\eps, \nu\flex}(n)$ and $\Sol_E^{\ge\eps, \nu\flex} = \bigcup_{n\in \N} \Sol_E^{\ge\eps, \nu\flex}(n)$.

\deffont{Flexible BS-rigidity} and \deffont{flexible stability} are defined analogously to the respective non-flexible notions:
\begin{definition}[Flexible stability and testability]\label{def:flexModel}
Fix a finite subset $E$ of $F_S$.
\begin{enumerate}
    \item $E$ is \deffont{$\nu$-flexibly testable} if for every $\eps > 0$, the pair $\inparen{\Sol_E,\Sol_E^{\ge \eps, \nu\flex}}$ is separable. 
    \item $E$ is \deffont{$\nu$-flexibly stable} if for every $\eps > 0$ there exists $s\in \N$ such that \SAS{} with word set $E$ and repetition factor $s$ is a separator for $\inparen{\Sol_E,\Sol_E^{\ge \eps, \nu\flex}}$.
\end{enumerate}
\end{definition}

Note that $E$ is testable (resp.\ stable) if and only if it is $\nu$-flexibly testable (resp. $\nu$-flexibly stable) for $\nu = 0$.

\begin{definition}\label{def:flexModelSpecifics}
Let $E\subseteq F_S$ and suppose that $E$ is $\nu$-flexibly testable (resp.\ $\nu$-flexibly stable).
\begin{enumerate}
    \item $E$ is \deffont{$O(\eps n)$-flexibly testable} (resp. \deffont{$O(\eps n)$-flexibly stable})\footnote{The notion $O(\eps n)$-flexible testability remains the same if one replaces $\eps$ by any function of $\eps$ that goes to $0$ as $\eps\to 0$. Indeed, by unwinding the definitions, one can see, for example, that $E$ is $O(\eps n)$-flexibly testable if and only if it is $O(\eps^2 n)$-flexibly testable, i.e., $\nu(\eps, n)$-flexibly testable where $\nu(\eps,n)\le c\eps^2 n$. The same holds for the notions of flexible stability and flexible BS-rigidity.}
    if $\nu(\eps,n) \le c\eps n$ for some fixed $c$ for all $\eps>0$ and $n\in \N$.

    \item $E$ is \deffont{$O(n)$-flexibly testable} (resp. \deffont{$O(n)$-flexibly stable})
    if $\nu(\eps,n) \le cn$ for some fixed $c$ for all $\eps>0$ and $n\in \N$.
    
    \item $E$ is \deffont{$\UB$-flexibly testable} (resp. \deffont{$\UB$-flexibly stable})
    if $\nu(\eps,n)=\infty$ for all $\eps>0$ and $n\in \N$
    (here $\UB$ stands for \emph{unboundedly}).
\end{enumerate}    
\end{definition}

In the existing literature, $O(\eps n)$-flexible stability is simply called flexible stability \cite{BeckerLubotzky,LLM}, while $\UB$-flexible stability is called \deffont{very-flexible stability} \cite{BeckerLubotzky,Ioana}. The terminology presented in \cref{def:flexModelSpecifics} is intended to be more informative.

\begin{definition}[Flexible BS-rigidity]
	A set $E\subseteq F_S$ is \deffont{$\nu$-flexibly BS-rigid} if for every $\eps > 0$ there exist a finite set $P = P(\eps)\subseteq F_S$ and $\delta = \delta(\eps) > 0$ such that $\dTV\inparen{N_{\ol \sigma,P}, N_{\ol \tau,P}} \ge \delta$ for every $n\in N$, $\ol \sigma \in \Sol_{E}^{\ge \eps,\nu\flex}(n)$ and $\ol \tau \in \Sol_{E}(n)$.
	
	The notions \deffont{$\UB$-flexible BS-rigidity}, \deffont{$O(\eps n)$-BS-rigidity} and \deffont{$O(n)$-BS-rigidity} are defined analogously to the respective notions in \cref{def:flexModelSpecifics}.
\end{definition}

\cref{prop:BS-rigidViaHoms} can be generalized to the flexible model:
\begin{prop}\label{prop:BS-rigidViaHomsFlex}
Let $E\subseteq F_S$ and write $\Gamma=\langle S \mid E \rangle$. The following conditions are equivalent.
\begin{enumerate}
    \item $E$ is $\nu$-flexibly BS-rigid.
    \item\label[condition]{enum:BS-rigidityByHomsFlex}
    For every sequence of functions $\inparen{f_\ell\colon\Gamma\to \Sym(n_\ell)}_{\ell=1}^\infty$, $n_\ell\to\infty$, such that
    $$d\inparen{f_\ell(\gamma_1\gamma_2),f_\ell(\gamma_1)f_\ell(\gamma_2)}\to_{\ell\to \infty} 0 \qquad\forall \gamma_1,\gamma_2\in \Gamma\ecomma$$
    if there is a sequence of homomorphisms $\inparen{g_\ell\colon\Gamma\to \Sym(n_\ell)}_{\ell=1}^\infty$, 
    $$d_{\TV}(N_{f_\ell,Q},N_{g_\ell,Q})\to_{\ell\to\infty} 0\qquad\forall Q\subset\Gamma, |Q|<\infty$$
    then there is a sequence $\inparen{\eps_\ell}_{\ell=1}^\infty$, $\eps_\ell \to_{\ell\to \infty} 0$ and a sequence of homomorphisms $\inparen{h_\ell\colon\Gamma\to \Sym(N_\ell)}_{\ell=1}^\infty$, ($N_\ell \in \N$) with
    $$n_\ell \le N_\ell \le n_\ell+\nu(\eps_\ell,n_\ell)$$
    such that 
    $$d\inparen{f_{\ell}(\gamma),h_{\ell}(\gamma)} \to_{\ell\to \infty} 0 \qquad \forall \gamma\in  \Gamma\eperiod$$
\end{enumerate}
\end{prop}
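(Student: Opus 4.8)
The plan is to prove both implications by contraposition, after building a dictionary that translates between the ``tuple of permutations'' language of $\nu$-flexible BS-rigidity and the ``function on $\Gamma$'' language of \cref{enum:BS-rigidityByHomsFlex}. Fix the projection $\pi\colon F_S\to\Gamma$, so that $\ker\pi=\lla E\rra$. Given $\ol\sigma\in\Sym(n)^k$, choose for each $\gamma\in\Gamma$ a word $w_\gamma$ with $\pi(w_\gamma)=\gamma$ and set $f_{\ol\sigma}(\gamma)=w_\gamma(\ol\sigma)$; conversely a function $f\colon\Gamma\to\Sym(n)$ yields the tuple $\inparen{f(\pi(s_1)),\dotsc,f(\pi(s_k))}$. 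As is standard (cf.\ \cite{Ioana} and the proof of \cref{prop:stability-defs}), the defect $\frac{1}{|E|}\sum_{w\in E}d(w(\ol\sigma),\id)$ tends to $0$ along a sequence if and only if the associated $f_{\ol\sigma}$ is an asymptotic homomorphism, and a tuple $\ol\tau\in\Sol_E(n)$ corresponds exactly to an honest homomorphism $g_{\ol\tau}\colon\Gamma\to\Sym(n)$. Moreover, unwinding the flexible metric, the existence of homomorphisms $h_\ell\colon\Gamma\to\Sym(N_\ell)$ with $n_\ell\le N_\ell\le n_\ell+\nu(\eps_\ell,n_\ell)$, $\eps_\ell\to0$, and $d(f_\ell(\gamma),h_\ell(\gamma))\to0$ for all $\gamma$ is equivalent to the flexible distance from the corresponding tuples $\ol\sigma^{(\ell)}$ to $\Sol_E$ tending to $0$; here one uses that closeness on the generating set $S$ propagates to all of $\Gamma$, since both sides are (asymptotically) multiplicative.

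The technical heart of the dictionary, and the step I expect to be the main obstacle, is matching the local statistics: I must show that $\dTV\inparen{N_{\ol\sigma^{(\ell)},P},N_{\ol\tau^{(\ell)},P}}\to0$ for every finite $P\subseteq F_S$ if and only if $\dTV(N_{f_\ell,Q},N_{g_\ell,Q})\to0$ for every finite $Q\subseteq\Gamma$. For the honest homomorphism $g_{\ol\tau}$ this is exact: with $Q=\pi(P)$ one has $\stab_{\ol\tau}(x)\cap P=P\cap\pi^{-1}\inparen{\stab_{g_{\ol\tau}}(x)\cap Q}$, so $N_{\ol\tau,P}$ is a fixed deterministic push-forward of $N_{g_{\ol\tau},Q}$, and TV distances transfer. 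For the approximate solution $\ol\sigma^{(\ell)}$ the correspondence is only approximate, and this is where care is needed: if $w,w'\in P$ satisfy $\pi(w)=\pi(w')$, then $w(w')^{-1}\in\lla E\rra$ is a product of conjugates of relators, and since normalized Hamming distance is conjugation-invariant and subadditive, $d(w(\ol\sigma^{(\ell)}),w'(\ol\sigma^{(\ell)}))$ is bounded by a fixed sum of relator defects, hence tends to $0$. A union bound over the finitely many such pairs in $P$ shows that for all but a vanishing fraction of vertices $x$ the window $\stab_{\ol\sigma^{(\ell)}}(x)\cap P$ is ``$\pi$-consistent'', whence $\dTV\inparen{N_{\ol\sigma^{(\ell)},P},\text{push-forward of }N_{f_\ell,Q}}\to0$, closing the dictionary.

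With the dictionary in hand, I prove that \cref{enum:BS-rigidityByHomsFlex} implies $\nu$-flexible BS-rigidity by contraposition. If $E$ is not $\nu$-flexibly BS-rigid, there is $\eps>0$ for which, taking $P_\ell\uparrow F_S$ and $\delta_\ell\to0$, one obtains witnesses $\ol\sigma^{(\ell)}\in\Sol_E^{\ge\eps,\nu\flex}(n_\ell)$ and $\ol\tau^{(\ell)}\in\Sol_E(n_\ell)$ with $\dTV\inparen{N_{\ol\sigma^{(\ell)},P_\ell},N_{\ol\tau^{(\ell)},P_\ell}}<\delta_\ell$. A key preliminary observation is that once $P_\ell\supseteq E$ the statistics-closeness forces the defect of $\ol\sigma^{(\ell)}$ to tend to $0$: every relator lies in $\stab_{\ol\tau^{(\ell)}}(x)$ for all $x$, so TV-closeness forces each relator to fix all but a $\delta_\ell$-fraction of vertices. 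Thus $\ol\sigma^{(\ell)}$ is a sequence of approximate solutions, its $f_\ell$ is an asymptotic homomorphism, and the dictionary yields honest homomorphisms $g_\ell$ with $\dTV(N_{f_\ell,Q},N_{g_\ell,Q})\to0$. Passing to a subsequence along which $n_\ell\to\infty$, \cref{enum:BS-rigidityByHomsFlex} produces $h_\ell$, which translate back to exact solutions within flexible distance $\to0$ of $\ol\sigma^{(\ell)}$, contradicting $\ol\sigma^{(\ell)}\in\Sol_E^{\ge\eps,\nu\flex}(n_\ell)$ (using monotonicity of $\nu$ in its first argument). The degenerate case of bounded $n_\ell$ reduces, via a constant subsequence, to $N_{\ol\sigma,P}=N_{\ol\tau,P}$ for all finite $P$; including the relators in $P$ then makes $\ol\sigma$ an exact solution, again contradicting $\ol\sigma\in\Sol_E^{\ge\eps,\nu\flex}(n)$.

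For the converse, assume $\nu$-flexible BS-rigidity and let $f_\ell,g_\ell$ be as in \cref{enum:BS-rigidityByHomsFlex}, with $n_\ell\to\infty$ given. Passing to the tuples $\ol\sigma^{(\ell)},\ol\tau^{(\ell)}$, the former are automatically approximate solutions since $f_\ell$ is an asymptotic homomorphism, so the large-defect case does not arise here. Were the desired $h_\ell$ not to exist, then along a subsequence $\ol\sigma^{(\ell)}\in\Sol_E^{\ge\eps,\nu\flex}(n_\ell)$ for a fixed $\eps>0$; combined with $\dTV\inparen{N_{\ol\sigma^{(\ell)},P},N_{\ol\tau^{(\ell)},P}}\to0$ for all finite $P$, supplied by the dictionary, this directly contradicts $\nu$-flexible BS-rigidity at that $\eps$. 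This completes both implications.
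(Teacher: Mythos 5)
Your proposal is correct and takes essentially the same route as the paper's proof: the same section-of-$\pi$ dictionary between tuples in $\Sym(n)^k$ and functions on $\Gamma$, the same push-forward/density-one argument for transferring the local statistics $N_{\cdot,P}\leftrightarrow N_{\cdot,Q}$, and the same propagation of closeness from the generators to all of $\Gamma$ together with monotonicity of $\nu$ in its first argument. The differences are purely organizational: the paper factors the contrapositive bookkeeping into a separate sequential reformulation (its Lemma on sequences), while you inline it and, to your credit, explicitly dispose of the degenerate bounded-$n_\ell$ case that the paper's lemma passes over.
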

\cref{prop:BS-rigidViaHomsFlex} is proved in \cref{sec:ProofBS-rigidViaHoms}.

\begin{definition} Let $\Gamma$ be a group.
\begin{enumerate}
    \item $\Gamma$ is \deffont{$\nu$-flexibly BS-rigid} if it satisfies \cref{enum:BS-rigidityByHomsFlex} of \cref{prop:BS-rigidViaHomsFlex}.
    \item $\Gamma$ is \deffont{$\nu$-flexibly stable} if for every sequence $(f_\ell\colon\Gamma\to\Sym(n_\ell))_{\ell=1}^\infty$, $n_\ell\to\infty$, satisfying \eqref{eq:AsymptoticHomomorphism} (see \cref{prop:stability-defs}), there is a sequence of homomorphisms $(h_\ell\colon\Gamma\to\Sym(N_\ell)$, $n_\ell\leq N_\ell\leq n_\ell+\nu(\eps,n_\ell)$, satisfying \eqref{eq:AsymptoticallyCloseFuncs}.
\end{enumerate}
    
\end{definition}

One can prove that a finite subset $E$ of $F_S$ is $\nu$-stable if and only if the same is true for the group $\inang{S\mid E}$ (we omit the proof, as it is very similar to the proof of \cref{prop:BS-rigidViaHomsFlex}). Using this and \cref{prop:BS-rigidViaHomsFlex}, we deduce the following generalization of \cref{prop:GroupProperty}.

\begin{corollary}\label{cor:GroupPropertyFlex}
Fix finite sets $S_1$ and $S_2$,
and take sets $E_{1}\subset F_{S_1}$ and $E_{2}\subset F_{S_2}$ 
such that the groups $\langle S_1 \mid E_1 \rangle$ and $\langle S_2 \mid E_2 \rangle$ are isomorphic. 
Then, $E_{1}$ is $\nu$-flexibly BS-rigid (resp. $\nu$-flexibly stable) if and only if the same is true for $E_{2}$.
\end{corollary}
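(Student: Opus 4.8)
The plan is to reduce both assertions to the group-theoretic reformulations already at our disposal and then observe that those reformulations are manifestly invariant under group isomorphism. Write $\Gamma_1 = \langle S_1 \mid E_1\rangle$ and $\Gamma_2 = \langle S_2 \mid E_2\rangle$, and fix an isomorphism $\phi\colon \Gamma_1 \to \Gamma_2$. By \cref{prop:BS-rigidViaHomsFlex}, $E_i$ is $\nu$-flexibly BS-rigid if and only if $\Gamma_i$ satisfies \cref{enum:BS-rigidityByHomsFlex}; and, by the stability analogue stated just above (whose proof parallels that of \cref{prop:BS-rigidViaHomsFlex}), $E_i$ is $\nu$-flexibly stable if and only if $\Gamma_i$ is $\nu$-flexibly stable as a group. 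In each case the right-hand condition is phrased purely in terms of the abstract group $\Gamma_i$---sequences of functions into symmetric groups, the asymptotic-homomorphism condition, the local statistics $N_{f_\ell,Q}$ and $N_{g_\ell,Q}$, and approximation by genuine homomorphisms---with no reference to the chosen generating set or presentation. It therefore suffices to show that these conditions transfer across $\phi$, which is what I would carry out as a transport-of-structure argument.

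Concretely, given a sequence $(f_\ell \colon \Gamma_1 \to \Sym(n_\ell))$ and accompanying homomorphisms $(g_\ell)$ witnessing the hypotheses over $\Gamma_1$, push them forward to $\tilde f_\ell = f_\ell \circ \phi^{-1}$ and $\tilde g_\ell = g_\ell \circ \phi^{-1}$ on $\Gamma_2$, apply the assumed condition for $\Gamma_2$ to obtain flexibly close homomorphisms $\tilde h_\ell$, and pull these back to $h_\ell = \tilde h_\ell \circ \phi$ on $\Gamma_1$; since $\phi^{\pm 1}$ are group homomorphisms, $g_\ell\circ\phi^{-1}$ and $\tilde h_\ell\circ\phi$ are again homomorphisms. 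Because $\phi$ preserves products, $\tilde f_\ell(\gamma\gamma') = f_\ell(\phi^{-1}(\gamma)\phi^{-1}(\gamma'))$ and $\tilde f_\ell(\gamma)\tilde f_\ell(\gamma') = f_\ell(\phi^{-1}(\gamma))f_\ell(\phi^{-1}(\gamma'))$, so the asymptotic-homomorphism condition ranging over all pairs in $\Gamma_1$ is equivalent to the one ranging over all pairs in $\Gamma_2$; the identity $d(\tilde h_\ell(\gamma),\tilde f_\ell(\gamma)) = d(h_\ell(\phi^{-1}(\gamma)),f_\ell(\phi^{-1}(\gamma)))$ transfers the approximation conclusion verbatim. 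The flexibility constraint $n_\ell \le N_\ell \le n_\ell + \nu(\eps_\ell, n_\ell)$ involves only the integers $n_\ell, N_\ell$ and the function $\nu$, so it is untouched by the relabeling. As everything is symmetric in $\Gamma_1$ and $\Gamma_2$ via $\phi$ and $\phi^{-1}$, the two conditions are equivalent, giving the corollary.

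The only step that needs genuine care is the local-statistics hypothesis $\dTV(N_{f_\ell, Q}, N_{g_\ell, Q}) \to 0$ appearing in BS-rigidity (the stability condition has no such clause and is strictly simpler). Here I would verify that, for every $x$, one has $\stab_{f_\ell \circ \phi^{-1}}(x) = \phi(\stab_{f_\ell}(x))$, so that the bijection $\phi$ carries the distribution $N_{f_\ell, Q}$ on $2^Q$ to $N_{f_\ell\circ\phi^{-1}, \phi(Q)}$ on $2^{\phi(Q)}$ while preserving total-variation distance, and likewise with $g_\ell$ in place of $f_\ell$. Since $\phi$ restricts to a bijection between the finite subsets of $\Gamma_1$ and those of $\Gamma_2$, the quantifier ``for all finite $Q$'' also matches up under $\phi$. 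Granting this identification, every ingredient of \cref{enum:BS-rigidityByHomsFlex} is preserved, and the corollary follows exactly as \cref{prop:GroupProperty} did in the non-flexible setting; the substantive content is entirely absorbed into \cref{prop:BS-rigidViaHomsFlex} and its stability counterpart, leaving the corollary a formal isomorphism-invariance statement.
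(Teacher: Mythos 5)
Your proposal is correct and follows essentially the same route as the paper: reduce via \cref{prop:BS-rigidViaHomsFlex} (and its stability analogue) to conditions stated purely in terms of the abstract group, and then observe that these are isomorphism-invariant. The paper leaves the transport-of-structure step implicit, whereas you spell it out (correctly, including the key identification $\stab_{f_\ell\circ\phi^{-1}}(x)=\phi(\stab_{f_\ell}(x))$ and the preservation of total-variation distance under the induced bijection of power sets), so your write-up is a more detailed version of the same argument.
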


The \LSM{} algorithm and \cref{thm:LSMUniversal} also generalize immediately to the flexible setting\footnote{Indeed, in the proof of \cref{thm:LSMUniversal} (see \cite[Thm.\ 3]{BLM}) one can take, in place of $\Sol_E$ and $\Sol^{\ge \eps}_E$, any  two sets $A,B \in \bigcup_{n\in \N}\Sym(n)^k$ that are invariant under relabeling of the base set $[n]$.}. Hence, a finite subset $E$ of $F_S$ is $\nu$-flexibly testable if and only if it is $\nu$-flexibly BS-rigid.

It is worth noting that for a finitely-generated amenable group, stability and $O(\eps n)$-flexible stability are equivalent. To see this, in light of \cref{thm:PositiveAmenable}(\ref{enum:PostiveAmenableStable}), it suffices to prove that if an amenable group $\Gamma$ is $O(\eps n)$-flexibly stable then every invariant random subgroup of $\Gamma$ is co-sofic. The proof of the last claim is identical to the proof of \cite[Theorem 7.10]{BLT}. We state this as a corollary:
\begin{corollary}
\label{cor:amenable-stab-iff-flexistab}
Let $\Gamma$ be a finitely-generated amenable group. Then $\Gamma$ is stable if and only if it is $O(\eps n)$-flexibly stable.
\end{corollary}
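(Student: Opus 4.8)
The plan is to prove the two implications separately, the forward one being immediate and the reverse one carrying all the content. First I would observe that stability is precisely $0$-flexible stability, and that weakening the flexibility parameter only enlarges the class of admissible witnesses: given an asymptotic homomorphism sequence $(f_\ell\colon\Gamma\to\Sym(n_\ell))$, any sequence of genuine homomorphisms on $[n_\ell]$ witnessing stability also witnesses $O(\eps n)$-flexible stability, since one may simply take $N_\ell=n_\ell\le n_\ell+\nu(\eps_\ell,n_\ell)$ and use no extra points. Hence stability implies $O(\eps n)$-flexible stability with no further work.

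For the converse, I would invoke \cref{thm:PositiveAmenable}(\ref{enum:PostiveAmenableStable}): since $\Gamma$ is finitely generated and amenable, it is stable if and only if every invariant random subgroup of $\Gamma$ is co-sofic. Thus it suffices to show that $O(\eps n)$-flexible stability forces every IRS $\mu$ of $\Gamma$ to be co-sofic, and the argument mirrors the proof of \cite[Thm.\ 7.10]{BLT}. The first step uses amenability: by Ornstein--Weiss \cite{OrnsteinWeiss}, realize $\mu$ as a weak-$\ast$ limit of the stabilizer distributions of a sequence of asymptotic homomorphisms $f_\ell\colon\Gamma\to\Sym(n_\ell)$, i.e. $N_{f_\ell,Q}$ converges to the $Q$-marginal of $\mu$ for every finite $Q\subseteq\Gamma$. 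Concretely, an IRS corresponds to a probability-measure-preserving action whose orbit equivalence relation is hyperfinite, and the Rokhlin/quasi-tiling machinery produces finite models that are almost-actions of $\Gamma$.

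The second step applies the hypothesis: feeding $(f_\ell)$ into $O(\eps n)$-flexible stability yields genuine homomorphisms $h_\ell\colon\Gamma\to\Sym(N_\ell)$ with $N_\ell\le n_\ell+c\eps_\ell n_\ell$ (some $\eps_\ell\to 0$) and $d(f_\ell(\gamma),h_\ell(\gamma))\to 0$ for every $\gamma\in\Gamma$. Each $h_\ell$ is an honest permutation action on a finite set, so its stabilizer distribution $\mu_\ell$ is a finite-index IRS. I would then check that $\mu_\ell\to\mu$ weak-$\ast$ by bounding $\dTV(N_{h_\ell,Q},N_{f_\ell,Q})$ for each finite $Q$: the two statistics can disagree only at points of $[n_\ell]$ where some $h_\ell(\gamma)$ and $f_\ell(\gamma)$ ($\gamma\in Q$) differ---a fraction at most $\sum_{\gamma\in Q}d(f_\ell(\gamma),h_\ell(\gamma))\to 0$---or at the $N_\ell-n_\ell\le c\eps_\ell n_\ell$ extra points together with the reweighting from $n_\ell$ to $N_\ell$, both contributing a fraction at most $O(\eps_\ell)\to 0$. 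Hence $N_{h_\ell,Q}$ converges to the $Q$-marginal of $\mu$, so $\mu$ is a limit of finite-index IRSs and is co-sofic, completing the reduction.

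The main obstacle---and the reason the specific rate $O(\eps n)$ is exactly right---lies in controlling the extra points introduced by flexibility. The whole reduction depends on the fact that the permutations $h_\ell$ act on only $N_\ell=n_\ell+o(n_\ell)$ points, so the added coordinates form a vanishing fraction and cannot perturb the limiting local statistics; with $\eps_\ell\to 0$ and $\nu(\eps,n)\le c\eps n$ this is guaranteed, whereas a coarser rate such as $O(n)$ could contribute a constant fraction of new points and distort the IRS. The other delicate ingredient is the Ornstein--Weiss realization of an arbitrary IRS by asymptotic homomorphisms, which is where amenability is genuinely used; everything after that is the soft comparison of local statistics sketched above.
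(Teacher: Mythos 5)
Your proposal is correct and follows essentially the same route as the paper: the forward implication is immediate, and the converse reduces via \cref{thm:PositiveAmenable}(\ref{enum:PostiveAmenableStable}) to showing that $O(\eps n)$-flexible stability forces every IRS to be co-sofic, which the paper handles by citing that the argument is identical to that of \cite[Theorem 7.10]{BLT}. The only difference is that you spell out that argument (Ornstein--Weiss realization of the IRS by asymptotic homomorphisms, then comparison of local statistics with the genuine homomorphisms supplied by flexible stability, using that the $O(\eps_\ell n_\ell)$ extra points form a vanishing fraction), whereas the paper defers it to the reference.
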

The amenability assumption in \cref{cor:amenable-stab-iff-flexistab} is probably essential. In particular, one may hope to find a $\nu$-flexibly stable group with Kazhdan's property $\T$ and infinitely many finite quotients (for some $\nu$).
Such a group is not stable in the strict sense (i.e., that of \cref{def:stability}, namely with $\nu=0$) by \cref{thm:NegativeKazhdan}.
Interestingly, by \cite[Lemma 3.2(2)]{Ioana}, a countable group with property $\ptau$ is UB-flexibly stable if and only if it is $\nu$-flexibly stable for some $\nu$ satisfying $\nu(\eps,n)=o(n)$.
It is especially interesting to know whether $\PSL_d(\ZZ)$ is UB-flexibly stable for at least one $d\geq 5$ because, by \cite{BowenBurton}, this will imply the existence of a non-sofic group.

\subsection{General properties}\label{sec:GeneralProps}

Here we describe some general properties of stability and BS-rigidity.  Propositions \ref{prop:DistinguishabilityByFiniteResidual},  \ref{prop:quotient-fg}, \ref{prop:FiniteIndexSubgroup} and \ref{prop:RF-not-weakly-stable} are proved in \cref{sec:ProofsGeneralProps}, while \cref{prop:DistingusihableStableEquivUndercoSoficCondition} is proved in \cref{sec:IRSFormulation}.

For a group $\Gamma$, write $R\left(\Gamma\right)$ for the finite
residual of $\Gamma$, i.e., the intersection of all finite-index
subgroups of $\Gamma$. Fix a function $\nu:\R_{>0}\times \N\to \N\cup \{0,\infty\}$, monotone non-decreasing in the first argument.

\begin{prop}\label{prop:DistinguishabilityByFiniteResidual}
A finitely-generated group $\Gamma$ is BS-rigid (resp.\ $\nu$-flexibly BS-rigid) if and only if the same is true for $\Gamma/R\left(\Gamma\right)$.
\end{prop}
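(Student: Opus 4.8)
The plan is to run everything through the homomorphism-theoretic characterizations of (flexible) BS-rigidity supplied by \cref{prop:BS-rigidViaHoms,prop:BS-rigidViaHomsFlex}, exploiting a single structural fact: since each $\Sym(N)$ is finite, every homomorphism $\Gamma\to\Sym(N)$ has finite-index kernel and therefore kills $R(\Gamma)$. Writing $\ol\Gamma=\Gamma/R(\Gamma)$ and $\pi\colon\Gamma\to\ol\Gamma$ for the quotient map, precomposition with $\pi$ is thus a bijection between homomorphisms $\ol\Gamma\to\Sym(N)$ and homomorphisms $\Gamma\to\Sym(N)$; note also that $\ol\Gamma$ is again finitely generated. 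I would treat the flexible and non-flexible cases uniformly, the only difference being that homomorphisms may land in $\Sym(N_\ell)$ with $n_\ell\le N_\ell\le n_\ell+\nu(\eps_\ell,n_\ell)$, a constraint to which the factoring-through-$\ol\Gamma$ fact is insensitive.

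For the direction ``$\ol\Gamma$ BS-rigid $\Rightarrow\Gamma$ BS-rigid'', fix an asymptotic homomorphism $(f_\ell\colon\Gamma\to\Sym(n_\ell))_\ell$ together with genuine homomorphisms $(g_\ell)_\ell$ satisfying $\dTV(N_{f_\ell,Q},N_{g_\ell,Q})\to 0$ for every finite $Q$. The first observation is that this forces $f_\ell$ to approximately fix $R(\Gamma)$ pointwise: for fixed $r\in R(\Gamma)$ we have $g_\ell(r)=\id$, so with $Q=\{r\}$ the distribution $N_{g_\ell,Q}$ is the point mass on the subset containing $r$; matching in total variation then gives $\Pr_{x}[f_\ell(r)x=x]\to 1$, i.e.\ $d(f_\ell(r),\id)\to 0$. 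Choosing a set-theoretic section $s\colon\ol\Gamma\to\Gamma$ of $\pi$ and putting $\ol f_\ell=f_\ell\circ s$, $\ol g_\ell=g_\ell\circ s$, I would check that $\ol f_\ell$ is an asymptotic homomorphism on $\ol\Gamma$: for fixed $\ol\gamma_1,\ol\gamma_2$ the elements $s(\ol\gamma_1)s(\ol\gamma_2)$ and $s(\ol\gamma_1\ol\gamma_2)$ differ by a fixed $r\in R(\Gamma)$, and combining the asymptotic-homomorphism property of $f_\ell$ with $d(f_\ell(r),\id)\to 0$ gives the claim; meanwhile $\ol g_\ell$ is precisely the homomorphism through which $g_\ell$ factors. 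Since $s$ is injective, for finite $\ol Q$ with $Q=s(\ol Q)$ the distributions $N_{\ol f_\ell,\ol Q}$ and $N_{f_\ell,Q}$ coincide under the bijection $s\colon\ol Q\to Q$ (and likewise for $g$), so the matching statistics transfer to $\ol\Gamma$. BS-rigidity of $\ol\Gamma$ then yields homomorphisms $\ol h_\ell$ with $d(\ol f_\ell(\ol\gamma),\ol h_\ell(\ol\gamma))\to 0$, and $h_\ell:=\ol h_\ell\circ\pi$ works: writing $\gamma=s(\pi(\gamma))\,r$ with $r\in R(\Gamma)$ and invoking $d(f_\ell(r),\id)\to 0$ once more gives $d(f_\ell(\gamma),\ol f_\ell(\pi(\gamma)))\to 0$, whence $d(f_\ell(\gamma),h_\ell(\gamma))\to 0$.

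For the reverse direction ``$\Gamma$ BS-rigid $\Rightarrow\ol\Gamma$ BS-rigid'', I would push in the opposite direction through $\pi$. Given an asymptotic homomorphism $(\ol f_\ell)_\ell$ on $\ol\Gamma$ with matching homomorphisms $(\ol g_\ell)_\ell$, set $f_\ell=\ol f_\ell\circ\pi$ and $g_\ell=\ol g_\ell\circ\pi$; these are an asymptotic homomorphism and a homomorphism on $\Gamma$, since for fixed $\gamma_1,\gamma_2$ the pair $(\pi(\gamma_1),\pi(\gamma_2))$ is fixed in $\ol\Gamma$. For finite $Q\subseteq\Gamma$ with $\ol Q=\pi(Q)$, one has $\stab_{f_\ell}(x)\cap Q=\pi^{-1}(\stab_{\ol f_\ell}(x)\cap\ol Q)\cap Q$, so $\stab_{f_\ell}(x)\cap Q$ is the deterministic image of $\stab_{\ol f_\ell}(x)\cap\ol Q$ under the map $T\mapsto\pi^{-1}(T)\cap Q$, which depends only on $\pi$ and $Q$; the same map relates $g_\ell$ to $\ol g_\ell$. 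Since total variation does not increase under a common pushforward, $\dTV(N_{f_\ell,Q},N_{g_\ell,Q})\le\dTV(N_{\ol f_\ell,\ol Q},N_{\ol g_\ell,\ol Q})\to 0$. Hence the hypotheses of BS-rigidity hold for $\Gamma$, producing homomorphisms $h_\ell\colon\Gamma\to\Sym(N_\ell)$ with $d(f_\ell(\gamma),h_\ell(\gamma))\to 0$. Each $h_\ell$ factors as $\ol h_\ell\circ\pi$, and for fixed $\ol\gamma$, choosing any $\gamma\in\pi^{-1}(\ol\gamma)$ gives $d(\ol f_\ell(\ol\gamma),\ol h_\ell(\ol\gamma))=d(f_\ell(\gamma),h_\ell(\gamma))\to 0$, as required.

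I expect the main obstacle to be the first direction: the pointwise deduction that matching the local statistics of $f_\ell$ against those of a genuine homomorphism forces $d(f_\ell(r),\id)\to 0$ for every $r\in R(\Gamma)$, followed by the bookkeeping showing that the section-transported $\ol f_\ell=f_\ell\circ s$ remains an asymptotic homomorphism with unchanged local statistics. The reverse direction is more mechanical, its only subtlety being that $\pi$ need not be injective on $Q$, which is exactly why I phrase the comparison as a statistic-decreasing pushforward rather than an identification. The flexible case requires no separate argument: homomorphisms into $\Sym(N_\ell)$ still factor through $\ol\Gamma$, and the size constraints $n_\ell\le N_\ell\le n_\ell+\nu(\eps_\ell,n_\ell)$ are carried along verbatim in both directions.
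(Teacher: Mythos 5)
Your argument is correct, and it rests on the same single structural fact as the paper's proof---every homomorphism into a finite symmetric group has finite-index kernel and hence kills $R(\Gamma)$---but the execution is genuinely different. The paper stays at the level of \cref{def:BS-rigid}: it presents $\Gamma=\inang{S\mid E}$ and $\Gamma/R(\Gamma)=\inang{S\mid E'}$ on the \emph{same} generating set with $E\subset E'$, observes that $\Sol_E(n)=\Sol_{E'}(n)$ for all $n$, and concludes immediately because BS-rigidity of a subset of $F_S$ depends only on the sequence of solution sets (which also determines $\Sol_E^{\ge\eps}(n)$ and the relevant local statistics). That is a two-line proof, at the cost of invoking the presentation-independence of BS-rigidity (\cref{prop:GroupProperty}). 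You instead work directly with the homomorphism characterization of \cref{prop:BS-rigidViaHomsFlex}, transporting asymptotic homomorphisms back and forth through the quotient map via a set-theoretic section; the extra work you correctly identify as the crux---deducing $d(f_\ell(r),\id)\to 0$ for $r\in R(\Gamma)$ from the matching of local statistics with a genuine homomorphism, checking that $f_\ell\circ s$ remains an asymptotic homomorphism, and phrasing the statistics comparison in the reverse direction as a common pushforward (so that non-injectivity of $\pi$ on $Q$ is harmless)---is all sound, including the bi-invariance of the normalized Hamming metric needed for the triangle-inequality chains and the observation that the window constraint $n_\ell\le N_\ell\le n_\ell+\nu(\eps_\ell,n_\ell)$ transfers verbatim. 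What your route buys is a proof that never mentions presentations and works directly with the group-level definition; what the paper's route buys is brevity.
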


\begin{prop}
\label{prop:quotient-fg}
Let $N$ be a normal subgroup of a finitely-generated group $\Gamma$, and suppose that $N$ is finitely-generated as a group. Then,
\begin{enumerate}
    \item If $\Gamma$ is stable (resp.\ $\nu$-flexibly stable) then the same is true for $\Gamma/N$.
    \item If $\Gamma$ is BS-rigid (resp.\ $\nu$-flexibly BS-rigid) then the same is true for $\Gamma/N$.
\end{enumerate}
\end{prop}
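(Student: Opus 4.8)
The plan is to reduce both parts to the asymptotic-homomorphism characterizations already established: \cref{enum:StabilityByHoms} of \cref{prop:stability-defs} (and its flexible analogue) for stability, and \cref{prop:BS-rigidViaHoms} (resp.\ \cref{prop:BS-rigidViaHomsFlex}) for BS-rigidity. Write $\pi\colon\Gamma\to\Gamma/N$ for the quotient map and fix a finite generating set $g_1,\dots,g_m$ of $N$. The common device is: starting from a sequence of functions $f_\ell\colon \Gamma/N\to\Sym(n_\ell)$ witnessing a failure of the property for $\Gamma/N$, pull it back to $\tilde f_\ell := f_\ell\circ\pi\colon\Gamma\to\Sym(n_\ell)$, which is still an asymptotic homomorphism since $\pi$ is a genuine homomorphism; invoke the hypothesis on $\Gamma$ to obtain homomorphisms $\tilde h_\ell\colon\Gamma\to\Sym(N_\ell)$ close to $\tilde f_\ell$; and then surgically modify $\tilde h_\ell$ into a homomorphism that factors through $\pi$, i.e.\ descends to $\Gamma/N$.

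The heart of the argument is this descent step, which is where finite generation and normality of $N$ enter. Set $Y_\ell = \Fix(\tilde h_\ell(N)) = \bigcap_{i=1}^m \Fix(\tilde h_\ell(g_i))$, the second equality holding because $\tilde h_\ell$ is a homomorphism, so a point is fixed by the subgroup $\tilde h_\ell(N)$ iff it is fixed by each generator. Two facts make this set useful. First, $Y_\ell$ is $\tilde h_\ell(\Gamma)$-invariant: for $x\in Y_\ell$, $\gamma\in\Gamma$ and $g\in N$ we have $\tilde h_\ell(g)\tilde h_\ell(\gamma)x = \tilde h_\ell(\gamma)\tilde h_\ell(\gamma^{-1}g\gamma)x = \tilde h_\ell(\gamma)x$, using $\gamma^{-1}g\gamma\in N$ by normality; hence $\tilde h_\ell$ restricted to $Y_\ell$ kills $N$ and descends to a homomorphism $\Gamma/N\to\Sym(Y_\ell)$. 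Second, the complement of $Y_\ell$ inside the original point set $[n_\ell]$ is negligible: $[n_\ell]\setminus Y_\ell \subseteq \bigcup_i\{x\in[n_\ell]: \tilde h_\ell(g_i)x\ne x\}$, and for each fixed $i$ the density $d(\tilde h_\ell(g_i),\id)$ tends to $0$, because $\tilde h_\ell(g_i)$ is asymptotically close to $\tilde f_\ell(g_i) = f_\ell(1_{\Gamma/N})$, and $f_\ell(1_{\Gamma/N})$ is asymptotically idempotent hence asymptotically trivial (by bi-invariance of the normalized Hamming metric, $d(f_\ell(1),f_\ell(1)^2)=d(f_\ell(1),\id)\to 0$). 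Since $m$ is finite, $\lvert[n_\ell]\setminus Y_\ell\rvert = o(n_\ell)$.

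Given this, I define $h_\ell$ on the point set $Y_\ell\cup[n_\ell]$ (a subset of $[N_\ell]$ containing $[n_\ell]$, relabelled to an initial segment fixing $[n_\ell]$) by letting $h_\ell(\bar\gamma)$ act as $\tilde h_\ell(\gamma)$ on $Y_\ell$ and as the identity on $[n_\ell]\setminus Y_\ell$; this is a well-defined homomorphism of $\Gamma/N$ because the two parts are disjoint and $\tilde h_\ell|_{Y_\ell}$ already factors through $\pi$. Its distance to $f_\ell$ on $[n_\ell]$ is bounded by $d(\tilde f_\ell(\gamma),\tilde h_\ell(\gamma)) + \lvert[n_\ell]\setminus Y_\ell\rvert/n_\ell\to 0$. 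In the non-flexible case $N_\ell=n_\ell$, so $Y_\ell\cup[n_\ell]=[n_\ell]$ and $h_\ell\in\Sym(n_\ell)$, giving the conclusion directly. In the flexible case $Y_\ell\cup[n_\ell]\subseteq[N_\ell]$ automatically has cardinality between $n_\ell$ and $N_\ell\le n_\ell+\nu(\eps_\ell,n_\ell)$, so the domain stays in the required window with the \emph{same} $\nu$ and the same $\eps_\ell$; this is the pleasant point that lets the flexible variants go through with no loss.

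Finally, for the BS-rigidity claims I must also check that the additional hypothesis in \cref{prop:BS-rigidViaHoms}/\cref{prop:BS-rigidViaHomsFlex} transfers from $\Gamma/N$ to $\Gamma$. Given homomorphisms $g_\ell\colon\Gamma/N\to\Sym(n_\ell)$ with $\dTV(N_{f_\ell,Q},N_{g_\ell,Q})\to 0$ for all finite $Q\subseteq\Gamma/N$, set $\tilde g_\ell = g_\ell\circ\pi$. For a finite $\tilde Q\subseteq\Gamma$ with image $Q=\pi(\tilde Q)$ one has $\stab_{\tilde f_\ell}(x)\cap\tilde Q = \pi^{-1}\inparen{\stab_{f_\ell}(x)}\cap\tilde Q$, which is a fixed deterministic function of $\stab_{f_\ell}(x)\cap Q$, and likewise for $g$. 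Thus $N_{\tilde f_\ell,\tilde Q}$ and $N_{\tilde g_\ell,\tilde Q}$ are pushforwards of $N_{f_\ell,Q}$ and $N_{g_\ell,Q}$ under one common deterministic map, and since total-variation distance cannot increase under such a map, $\dTV(N_{\tilde f_\ell,\tilde Q},N_{\tilde g_\ell,\tilde Q})\to 0$. This verifies the hypothesis for $\Gamma$, after which the descent step above yields the desired $h_\ell$. I expect the descent/fixed-point step to be the main obstacle—specifically, securing $\lvert[n_\ell]\setminus Y_\ell\rvert=o(n_\ell)$ and the domain bookkeeping in the flexible model—while the transfer of the local-statistics hypothesis is a routine data-processing argument.
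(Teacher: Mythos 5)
Your proof is correct, and it reaches the same core construction as the paper by a different route. The paper passes through the invariant-random-subgroup formulation (\cref{prop:IRSFormulation}): a stability/BS-rigidity challenge for $\Gamma/N$ is automatically one for $\Gamma$ (for the BS-rigidity case, because $\IRS_{\findex}(\Gamma/N)\subseteq\IRS_{\findex}(\Gamma)$, so co-sofic in $\Gamma/N$ implies co-sofic in $\Gamma$), and the resulting solution by $\Gamma$-sets is converted to $\Gamma/N$-sets by \cref{lem:IRS-fg-normal-sgp}. That lemma performs exactly your surgery --- restrict to the points whose stabilizer contains $N$, which is a union of orbits by normality, and act trivially on the complement --- but proves negligibility of the complement via weak-$\ast$ convergence of the random stabilizers to a measure $\mu\in\IRS(\Gamma/N)$ (so $\mu_n(C_{A,A})\to\mu(C_{A,A})=1$ for a finite generating set $A$ of $N$). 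You instead work directly with the asymptotic-homomorphism characterizations of \cref{prop:stability-defs} and \cref{prop:BS-rigidViaHomsFlex}, and get negligibility from $d(\tilde h_\ell(g_i),\id)\le d(\tilde h_\ell(g_i),f_\ell(1))+d(f_\ell(1),\id)\to 0$; your data-processing argument for the total-variation hypothesis is the homomorphism-language counterpart of the paper's one-line observation about co-soficity descending to $\Gamma$. Your version is more self-contained (it bypasses \cref{prop:IRSFormulation}, whose proof the paper partly outsources) and keeps the flexible-model bookkeeping explicit, confirming that the same $\nu$ works; the paper's version is shorter once the IRS machinery is in place and isolates the reusable \cref{lem:IRS-fg-normal-sgp}. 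All the delicate points --- $\Fix(\tilde h_\ell(N))=\bigcap_i\Fix(\tilde h_\ell(g_i))$ via finite generation, invariance via normality, and $d(f_\ell(1),\id)\to 0$ via bi-invariance of the Hamming metric --- are handled correctly.
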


\begin{prop} \label{prop:FiniteIndexSubgroup}
Let $\Gamma$ be a $\nu$-flexibly stable (resp. $\nu$-flexibly BS-rigid) group, and let $H$ be a finite-index subgroup of $\Gamma$. Then $H$ is $\nu'$-flexibly stable (resp. $\nu'$-flexibly BS-rigid) for $\nu'(\eps,n)=([\Gamma:H]-1)n+\nu(\eps,[\Gamma:H]n)$.
\end{prop}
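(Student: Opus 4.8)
The plan is to exploit the classical operation of \emph{inducing} a permutation representation from $H$ to $\Gamma$, which multiplies the number of points by $m:=[\Gamma:H]$, and to show that this operation is compatible with every notion in play. Fix coset representatives $t_1=e,t_2,\dotsc,t_m$ for $\Gamma/H$, write $\sigma_\gamma\in\Sym(m)$ for the permutation of $\Gamma/H$ induced by left multiplication by $\gamma$, and set $h_{\gamma,i}:=t_{\sigma_\gamma(i)}^{-1}\gamma t_i\in H$, so that $\gamma t_i=t_{\sigma_\gamma(i)}h_{\gamma,i}$. For any $f\colon H\to\Sym(n)$, define the induced map $\widetilde f\colon\Gamma\to\Sym([m]\times[n])$ by $\widetilde f(\gamma)(i,x)=(\sigma_\gamma(i),f(h_{\gamma,i})x)$; this is a well-defined permutation of the $mn$-point set. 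The one computation underlying everything is the cocycle identity $h_{\gamma_1\gamma_2,i}=h_{\gamma_1,\sigma_{\gamma_2}(i)}\,h_{\gamma_2,i}$, obtained by expanding $\gamma_1\gamma_2 t_i$ two ways. It immediately gives that \emph{$\widetilde f$ is a genuine homomorphism whenever $f$ is}.

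Two further consequences of the cocycle identity drive the proof. First, for arbitrary $f$ a direct count yields $d\inparen{\widetilde f(\gamma_1\gamma_2),\widetilde f(\gamma_1)\widetilde f(\gamma_2)}=\tfrac1m\sum_{i=1}^m d\inparen{f(a_ib_i),f(a_i)f(b_i)}$, where $a_i=h_{\gamma_1,\sigma_{\gamma_2}(i)}$ and $b_i=h_{\gamma_2,i}$ are fixed elements of $H$ depending only on $\gamma_1,\gamma_2,i$; hence if $(f_\ell)$ is an asymptotic homomorphism of $H$ then $(\widetilde{f_\ell})$ is an asymptotic homomorphism of $\Gamma$, being a finite average of quantities that each tend to $0$. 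Second, on the first block $\{1\}\times[n]$, identified with $[n]$ via $x\leftrightarrow(1,x)$, the induced map restricted to $H$ acts \emph{exactly} as $f$ (since $t_1=e$ gives $\widetilde f(\eta)(1,x)=(1,f(\eta)x)$ for $\eta\in H$). Counting disagreements inside this block against all $mn$ points, for any $\tau\in\Sym(N)$ with $N\ge mn$ and any $\eta\in H$ we get $d\inparen{f(\eta),\tau}\le m\cdot d\inparen{\widetilde f(\eta),\tau}$, so flexible closeness to $\widetilde f$ descends to flexible closeness to $f$. These two facts settle the stability case: given an asymptotic homomorphism $f_\ell\colon H\to\Sym(n_\ell)$, induce to $\widetilde{f_\ell}\colon\Gamma\to\Sym(mn_\ell)$, apply $\nu$-flexible stability of $\Gamma$ to obtain genuine $h^\Gamma_\ell\colon\Gamma\to\Sym(N_\ell)$ with $mn_\ell\le N_\ell\le mn_\ell+\nu(\eps_\ell,mn_\ell)$ that is flexibly close to $\widetilde{f_\ell}$, and restrict to $H$. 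The restriction is a genuine $H$-homomorphism, flexibly close to $f_\ell$ by the block estimate, and the size bound reads $n_\ell\le N_\ell\le n_\ell+\nu'(\eps_\ell,n_\ell)$ precisely because $\nu'(\eps,n)=(m-1)n+\nu(\eps,mn)$.

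For the BS-rigid case I would work through the homomorphism characterization \cref{enum:BS-rigidityByHomsFlex} of \cref{prop:BS-rigidViaHomsFlex}. The one additional ingredient needed is that induction does not increase Benjamini--Schramm distances. Unwinding the definition, $\gamma\in\stab_{\widetilde f}((i,x))$ iff $\gamma$ fixes the coset $i$ (i.e.\ $\gamma\in t_iHt_i^{-1}$) and $t_i^{-1}\gamma t_i\in\stab_f(x)$. Thus for each finite $Q'\subseteq\Gamma$ there is a finite set $Q=\bigcup_{i=1}^m\{t_i^{-1}\gamma t_i\mid \gamma\in Q'\cap t_iHt_i^{-1}\}\subseteq H$ and a \emph{fixed} function $\Phi$ (independent of $f$ and $n$) with $\stab_{\widetilde f}((i,x))\cap Q'=\Phi\inparen{i,\stab_f(x)\cap Q}$. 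Sampling $(i,x)$ uniformly shows $N_{\widetilde f,Q'}$ is the $\Phi$-pushforward of $\uniform_{[m]}\times N_{f,Q}$, and since total-variation distance is non-increasing under a fixed pushforward and under taking products with a fixed distribution, $\dTV\inparen{N_{\widetilde f,Q'},N_{\widetilde g,Q'}}\le\dTV\inparen{N_{f,Q},N_{g,Q}}$. Now, given asymptotic homomorphisms $f_\ell\colon H\to\Sym(n_\ell)$ together with genuine homomorphisms $g_\ell$ with $\dTV(N_{f_\ell,Q},N_{g_\ell,Q})\to0$ for all finite $Q\subseteq H$, induce both: $\widetilde{g_\ell}$ is genuine, $\widetilde{f_\ell}$ is an asymptotic homomorphism, and $\dTV(N_{\widetilde{f_\ell},Q'},N_{\widetilde{g_\ell},Q'})\to0$ for all finite $Q'\subseteq\Gamma$. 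Applying $\nu$-flexible BS-rigidity of $\Gamma$ and restricting to $H$ exactly as before yields the conclusion for $H$ with the same size bookkeeping.

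I expect the Benjamini--Schramm transfer in the last paragraph to be the main obstacle: one must correctly pin down the finite window $Q\subseteq H$ corresponding to a given $Q'\subseteq\Gamma$ and verify that the local statistics of the induced action are genuinely a deterministic function of $\inparen{i,\stab_f(x)\cap Q}$, so that the product-and-pushforward monotonicity of $\dTV$ applies. The induction-of-asymptotic-homomorphisms computation and the block estimate are routine once the cocycle identity is in hand.
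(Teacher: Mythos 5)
Your proof is correct and follows essentially the same route as the paper: both induce the (approximate) actions from $H$ to $\Gamma$ via the cocycle $c(\gamma,\gamma' H)=s(\gamma\gamma' H)^{-1}\gamma s(\gamma' H)$ (your $h_{\gamma,i}$), observe that $\stab_{\widetilde f}((i,x))\cap Q'$ is a deterministic function of $i$ and $\stab_f(x)\cap Q$ for a suitable finite window $Q\subseteq H$ so that local statistics transfer, apply the $\nu$-flexible hypothesis on $\Gamma$, and recover the conclusion for $H$ by restricting to the block $\{1_\Gamma H\}\times[n_\ell]$ with the same size bookkeeping $N_\ell-n_\ell\le(m-1)n_\ell+\nu(\eps_\ell,mn_\ell)$.
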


In \cref{prop:RF-not-weakly-stable,prop:DistingusihableStableEquivUndercoSoficCondition} below, we state additional connections between stability and BS-rigidity. \cref{prop:RF-not-weakly-stable} is concerned with a certain relaxation of stability, called \deffont{weak stability}:

\begin{definition}[Weak stability]
A group $\Gamma$ is \deffont{$\nu$-flexibly weakly stable} (or simply \deffont{weakly stable} when $\nu = 0$) if
for every sequence of functions $\inparen{f_\ell:\Gamma\to \Sym(n_\ell)}_{\ell=1}^\infty$, $n_\ell\to\infty$, such that
        \begin{equation}
        \label{eq:weak-almost-hom}
        \inparen{f_\ell(\gamma_1\gamma_2),f_\ell(\gamma_1)f_\ell(\gamma_2)}\to_{\ell\to \infty} 0\qquad\forall \gamma_1,\gamma_2\in \Gamma\ecomma
                \end{equation}
        and
        \begin{equation}
        \label{eq:weak-almost-injective}
        d\inparen{f_\ell(\gamma),1_{\Sym(n_\ell)}}\to_{\ell\to\infty} 1\qquad\forall \gamma\in \Gamma\setminus{\inset{1_\Gamma}}\eperiod
        \end{equation}
        there exists a sequence of homomorphisms $\inparen{h_\ell:\Gamma\to \Sym(N_\ell)}_{\ell=1}^\infty$, $n_\ell \le N_\ell \le n_\ell+\nu(\eps_\ell,n_\ell)$, such that
        $$d\inparen{f_\ell(\gamma),h_\ell(\gamma)}\to_{\ell\to\infty} 0\qquad\forall \gamma\in \Gamma\eperiod$$
\end{definition}

\begin{proposition}
\label{prop:RF-not-weakly-stable}
A finitely-generated residually-finite BS-rigid (resp.\ $\nu$-flexibly BS-rigid) group is weakly stable (resp.\ $\nu$-flexibly weakly stable).
\end{proposition}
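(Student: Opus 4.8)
The plan is to reduce weak stability to BS-rigidity through \cref{prop:BS-rigidViaHoms} (resp.\ \cref{prop:BS-rigidViaHomsFlex} in the flexible case), by showing that the extra hypothesis \eqref{eq:weak-almost-injective} in the definition of weak stability is exactly what lets us verify the ``if'' clause of \cref{enum:BS-rigidityByHoms}, namely the existence of honest homomorphisms matching the local statistics. So let $\Gamma$ be finitely-generated, residually finite and BS-rigid, and let $\inparen{f_\ell\colon\Gamma\to\Sym(n_\ell)}$, $n_\ell\to\infty$, satisfy \eqref{eq:weak-almost-hom} and \eqref{eq:weak-almost-injective}. I want to produce homomorphisms $g_\ell\colon\Gamma\to\Sym(n_\ell)$ with $\dTV\inparen{N_{f_\ell,Q},N_{g_\ell,Q}}\to 0$ for every finite $Q\subseteq\Gamma$; \cref{enum:BS-rigidityByHoms} will then supply homomorphisms $h_\ell$ with $d(f_\ell(\gamma),h_\ell(\gamma))\to 0$, which is precisely the conclusion of weak stability.

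First I would pin down the limiting local statistics of $f_\ell$. Applying \eqref{eq:weak-almost-hom} with $\gamma_1=\gamma_2=1$ and using bi-invariance of $d$ gives $d(f_\ell(1),\id)\to 0$, so $1\in\stab_{f_\ell}(x)$ for a $1-o(1)$ fraction of $x\in[n_\ell]$; meanwhile \eqref{eq:weak-almost-injective} says that for each fixed $\gamma\ne 1$ the fixed-point fraction $1-d(f_\ell(\gamma),\id)\to 0$. A union bound over the finite set $Q\setminus\{1\}$ then shows $\stab_{f_\ell}(x)\cap Q=\{1\}\cap Q$ for all but an $o(1)$ fraction of $x$, i.e.\ $N_{f_\ell,Q}$ converges in total variation to the point mass at $\{1\}\cap Q$, for every finite $Q$.

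The heart of the argument is realizing this ``free'' local profile by genuine homomorphisms, which is where residual finiteness enters. Since $\Gamma$ is finitely generated and residually finite, I would fix a nested chain of finite-index \emph{normal} subgroups $\Gamma=K_0\supseteq K_1\supseteq\cdots$ with $\bigcap_t K_t=\{1\}$, and set $m_t=[\Gamma:K_t]$. As $n_\ell\to\infty$ one may choose $t(\ell)\to\infty$ growing slowly enough that $m_{t(\ell)}=o(n_\ell)$ (e.g.\ $t(\ell)$ maximal with $m_{t(\ell)}\le\sqrt{n_\ell}$). Writing $n_\ell=a_\ell m_{t(\ell)}+r_\ell$ with $0\le r_\ell<m_{t(\ell)}$, let $g_\ell$ be the homomorphism consisting of $a_\ell$ disjoint copies of the coset action of $\Gamma$ on $\Gamma/K_{t(\ell)}$ together with the trivial action on the remaining $r_\ell$ points. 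Normality forces every point in a coset block to have stabilizer exactly $K_{t(\ell)}$, so for such $x$ and any finite $Q$ we have $\stab_{g_\ell}(x)\cap Q=K_{t(\ell)}\cap Q$, which equals $\{1\}\cap Q$ once $\ell$ is large (as $t(\ell)\to\infty$ and $\bigcap_t K_t=\{1\}$). The only ``bad'' points are the $r_\ell<m_{t(\ell)}=o(n_\ell)$ fixed points, a vanishing fraction, so $N_{g_\ell,Q}$ also converges to the point mass at $\{1\}\cap Q$.

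Combining the two computations gives $\dTV\inparen{N_{f_\ell,Q},N_{g_\ell,Q}}\to 0$ for every finite $Q$, which is exactly the hypothesis of \cref{enum:BS-rigidityByHoms}; BS-rigidity then yields homomorphisms $h_\ell\colon\Gamma\to\Sym(n_\ell)$ with $d(f_\ell(\gamma),h_\ell(\gamma))\to 0$, establishing weak stability. The flexible statement is handled identically, invoking \cref{prop:BS-rigidViaHomsFlex} in place of \cref{prop:BS-rigidViaHoms}: the $g_\ell$ are built on exactly $n_\ell$ points as above, and the flexible proposition already delivers homomorphisms $h_\ell\colon\Gamma\to\Sym(N_\ell)$ with $n_\ell\le N_\ell\le n_\ell+\nu(\eps_\ell,n_\ell)$, matching the definition of $\nu$-flexible weak stability. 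I expect the only genuine obstacle to be the degree bookkeeping in constructing $g_\ell$ --- hitting exactly $n_\ell$ points while keeping the injectivity radius $K_{t(\ell)}$ growing --- which the remainder-padding by a trivial action resolves at the cost of an $o(1)$ fraction of extra fixed points.
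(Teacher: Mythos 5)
Your proof is correct, but it takes a genuinely different route from the paper's. The paper argues entirely in the invariant-random-subgroup framework: by \cref{prop:IRSFormulation}(3), weak stability amounts to every sofic approximation having a solution; a sofic approximation converges to the Dirac measure at the trivial subgroup, residual finiteness makes that measure co-sofic, so the sofic approximation is in fact a BS-rigidity challenge and has a solution by \cref{prop:IRSFormulation}(2). You instead bypass the IRS machinery and work directly with the almost-homomorphism characterizations (\cref{prop:BS-rigidViaHoms} and \cref{prop:BS-rigidViaHomsFlex}): you compute that \eqref{eq:weak-almost-hom} and \eqref{eq:weak-almost-injective} force $N_{f_\ell,Q}$ to converge to the point mass at $\{1\}\cap Q$, and then you manufacture honest homomorphisms $g_\ell$ with the same limiting local statistics by padding disjoint copies of coset actions $\Gamma/K_{t(\ell)}$ along a normal chain with trivial intersection. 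The two arguments hinge on the same fact --- residual finiteness lets one realize the ``free'' local profile by genuine finite $\Gamma$-actions; in the paper this is the one-line observation that the Dirac measure at $\{1\}$ is co-sofic, while in your version it is the explicit construction of the $g_\ell$. The paper's proof is shorter given the IRS infrastructure it has already built; yours is more self-contained, needing only \cref{prop:BS-rigidViaHomsFlex}, and makes the role of residual finiteness completely concrete. The only loose end is the degenerate case where the chain $(K_t)$ stabilizes (i.e.\ $\Gamma$ is finite), where ``$t(\ell)$ maximal with $m_{t(\ell)}\le\sqrt{n_\ell}$'' needs a cap, but the construction goes through verbatim with $K_{t(\ell)}=\{1\}$, so this is bookkeeping rather than a gap.
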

In light of \cref{thm:PositiveAmenable}(1),
\cref{prop:RF-not-weakly-stable} explains and generalizes \cite[Theorem 1.1]{ArzhantsevaPaunescu}, which states that residually-finite amenable groups are weakly stable.

\begin{proposition}\label{prop:DistingusihableStableEquivUndercoSoficCondition}
\label{prop:if-dense-testable-is-stable}If every invariant random subgroup of $\Gamma$
is co-sofic then $\Gamma$ is BS-rigid (resp.\ $\nu$-flexibly BS-rigid) if and only if $\Gamma$ is
stable (resp.\ $\nu$-flexibly stable).
\end{proposition}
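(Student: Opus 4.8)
The plan is to prove the two implications separately and to observe that only the direction ``BS-rigid $\Rightarrow$ stable'' actually consumes the co-soficity hypothesis. The reverse implication is unconditional: comparing \cref{enum:BS-rigidityByHoms} of \cref{prop:BS-rigidViaHoms} (resp.\ \cref{enum:BS-rigidityByHomsFlex} of \cref{prop:BS-rigidViaHomsFlex}) with the homomorphism form of stability (\cref{enum:StabilityByHoms} of \cref{prop:stability-defs}, resp.\ its flexible analogue), one sees that BS-rigidity is literally stability with an \emph{extra hypothesis} inserted into the quantifier over asymptotic homomorphisms. Hence the conclusion that stability guarantees for \emph{every} asymptotic homomorphism is, a fortiori, guaranteed under the added hypothesis, so stability (resp.\ $\nu$-flexible stability) implies BS-rigidity (resp.\ $\nu$-flexible BS-rigidity) for any $\Gamma$ whatsoever.

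For the converse, fix an asymptotic homomorphism $\inparen{f_\ell\colon\Gamma\to\Sym(n_\ell)}$, $n_\ell\to\infty$, as in \eqref{eq:AsymptoticHomomorphism}. By \cref{prop:BS-rigidViaHoms} (resp.\ \cref{prop:BS-rigidViaHomsFlex}), to produce homomorphisms $(h_\ell)$ witnessing stability (resp.\ $\nu$-flexible stability) it suffices to produce \emph{genuine} homomorphisms $\inparen{g_\ell\colon\Gamma\to\Sym(n_\ell)}$ on the \emph{same} ground sets with
$$
\dTV\inparen{N_{f_\ell,Q},N_{g_\ell,Q}}\to_{\ell\to\infty}0\qquad\forall Q\subset\Gamma,\ |Q|<\infty\eperiod
$$
The key bookkeeping point is that one and the same family $(g_\ell)$ serves both the rigid and the $\nu$-flexible case; the flexibility budget $\nu$ enters only at the very end, when $(f_\ell)$ and $(g_\ell)$ are fed into the appropriate BS-rigidity hypothesis to extract $(h_\ell)$ on the enlarged base sets $N_\ell\le n_\ell+\nu(\eps_\ell,n_\ell)$.

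It remains to construct such $(g_\ell)$ from co-soficity. Passing to a subsequence and diagonalizing over an exhaustion of $\Gamma$ by finite sets $Q$, I may assume that for every finite $Q$ the distribution $N_{f_\ell,Q}$ of $\stab_{f_\ell}(x)\cap Q$ converges; these limits are consistent and assemble into a probability measure $\mu$ on $\Sub(\Gamma)$ (with the Chabauty topology; see \cref{sec:IRS}). The asymptotic-homomorphism property \eqref{eq:AsymptoticHomomorphism} forces $\mu$ to be a genuine invariant random subgroup: for all-but-$o(n_\ell)$ of the points $x$, the approximate stabilizer $\stab_{f_\ell}(x)$ is approximately closed under products and inverses and is approximately conjugated to $\stab_{f_\ell}(f_\ell(\gamma)x)$ by $f_\ell(\gamma)$, so in the limit $\mu$ is supported on honest subgroups and is conjugation-invariant (this is the IRS dictionary of \cref{sec:IRSFormulation}). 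By hypothesis $\mu$ is co-sofic, i.e.\ a weak-$\ast$ limit of invariant random subgroups supported on finite-index subgroups; each approximant is a mixture of the transitive IRSs attached to finitely many finite-index subgroups $H_1,\dotsc,H_r$ with weights $p_1,\dotsc,p_r$. Choosing the approximants to converge to $\mu$ along $\ell$, I realize each on the ground set $[n_\ell]$ by placing $\approx p_i n_\ell/[\Gamma:H_i]$ disjoint copies of $\Gamma\curvearrowright\Gamma/H_i$ and filling the remaining $o(n_\ell)$ points with fixed points. This yields honest homomorphisms $g_\ell\colon\Gamma\to\Sym(n_\ell)$ with $N_{g_\ell,Q}\to\mu_Q$ for every finite $Q$, and together with $N_{f_\ell,Q}\to\mu_Q$ gives the desired convergence.

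Finally, a routine subsequence argument upgrades this to the full sequence: applying the subsequence principle to the scalar $\inf_{h\in\Hom(\Gamma,\Sym(n_\ell))}\max_{s\in S}d(f_\ell(s),h(s))$ (and its flexible analogue) and using that control on a generating set $S$ propagates to all $\gamma\in\Gamma$ through words — since $f_\ell$ is an asymptotic homomorphism and $h_\ell$ an honest one — establishes stability (resp.\ $\nu$-flexible stability). I expect the \textbf{main obstacle} to be the realization step of the previous paragraph: converting the abstract co-sofic limit $\mu$ into explicit honest actions on \emph{exactly} $n_\ell$ points while simultaneously controlling \emph{all} finite-window statistics $N_{g_\ell,Q}$, together with the verification that the limit of the approximate stabilizers is a bona fide invariant random subgroup. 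The integer packing with $o(n_\ell)$ fixed points is precisely what keeps $g_\ell$ on exactly $n_\ell$ points in the non-flexible case, whereas the flexible case merely relaxes the final invocation of BS-rigidity.
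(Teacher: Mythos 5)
Your proposal is correct, and the underlying idea is the same as the paper's: the co-soficity hypothesis is consumed exactly once, to guarantee that the limiting invariant random subgroup of an arbitrary asymptotic homomorphism (equivalently, of an arbitrary stability challenge) is co-sofic, so that the BS-rigidity hypothesis applies to it; the converse direction is unconditional because BS-rigidity is stability restricted by an extra hypothesis. The difference is one of packaging. The paper proves the proposition in two lines as a direct corollary of \cref{prop:IRSFormulation}: a stability challenge $X_n\to\mu$ with every IRS co-sofic is automatically a BS-rigidity challenge, hence has a solution. Everything you identify as the ``main obstacle'' --- extracting the limit measure $\mu\in\IRS(\Gamma)$ from the approximate stabilizers of $f_\ell$, verifying it is a genuine conjugation-invariant measure on subgroups, and realizing its co-sofic approximants as honest actions on exactly $n_\ell$ points with converging finite-window statistics --- is precisely the content that the paper outsources to \cref{prop:IRSFormulation} (whose stability case is \cite[Lem.\ 7.4]{BLT}) and to \cref{lem:amplification-sparsification} (\cite[Lem.\ 7.6]{BLT}), which handles the integer bookkeeping of fitting disjoint copies of $\Gamma/H_i$ plus $o(n_\ell)$ fixed points onto a prescribed ground set. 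Your inline reconstruction of these steps is sound (the reduction via \cref{prop:BS-rigidViaHoms} to producing the comparison homomorphisms $g_\ell$, the density of finite rational mixtures of transitive finite-index IRSs in $\IRS_{\findex}(\Gamma)$, and the final subsequence-plus-generating-set argument are all standard and work as you describe), so what your route buys is self-containedness at the cost of redoing the IRS dictionary; what the paper's route buys is that, once \cref{prop:IRSFormulation} is in place, the proposition is essentially a tautology about which challenges count as BS-rigidity challenges.
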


\cref{prop:DistingusihableStableEquivUndercoSoficCondition} sheds light on a connection between the two parts of \cref{thm:PositiveAmenable}. Specifically, \cref{thm:PositiveAmenable}(\ref{enum:PostiveAmenableBS-rigid}), together with \cref{prop:DistingusihableStableEquivUndercoSoficCondition}, implies the positive part of \cref{thm:PositiveAmenable}(\ref{enum:PostiveAmenableStable}), namely, that a finitely-generated amenable group whose invariant random subgroups are all co-sofic is stable.

\subsection{Examples}\label{sec:examples}
We now use the toolkit we have built to state some positive and negative results about stability and BS-rigidity of concrete groups.
Below we write $F_m$ for a free group with a basis of cardinality $m$.
\begin{prop}
\label{prop:virtually-ioana-not-weakly}
Let $\Gamma$ be a group satisfying at least one of the following conditions.
\begin{enumerate}
    \item\label[condition]{cond:Ioana1} $\Gamma$ contains a finite-index subgroup $H$ such that $H\cong F_{m}\times F_{n}$ ($m\geq2$,
$n\geq2$).
    \item\label[condition]{cond:Ioana2} $\Gamma$ contains a finite-index subgroup $H$ such that $H\cong F_{m}\times\Z^{d}$ ($m\geq2$, $d\geq1$).
\end{enumerate}
Then, $\Gamma$ is not $\UB$-flexibly BS-rigid.
\end{prop}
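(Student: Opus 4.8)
The plan is to reduce the statement, via the structural results of \cref{sec:GeneralProps}, to the single fact that the building blocks $F_m\times F_n$ and $F_m\times\Z^d$ themselves fail $\UB$-flexible BS-rigidity, and to establish that failure through weak stability. Concretely, I would first pass from $\Gamma$ to its finite-index subgroup $H$, then trade BS-rigidity of $H$ for its weak stability, and finally invoke Ioana's almost-commuting construction to rule out weak stability for these two families of groups.

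For the first reduction I argue by contraposition using \cref{prop:FiniteIndexSubgroup}. Suppose $\Gamma$ were $\UB$-flexibly BS-rigid, i.e.\ $\nu$-flexibly BS-rigid with $\nu(\eps,n)=\infty$. Then \cref{prop:FiniteIndexSubgroup} yields that $H$ is $\nu'$-flexibly BS-rigid with $\nu'(\eps,n)=([\Gamma:H]-1)n+\nu(\eps,[\Gamma:H]n)=\infty$; that is, $H$ is again $\UB$-flexibly BS-rigid. Hence it suffices to prove that $H\cong F_m\times F_n$ (resp.\ $H\cong F_m\times\Z^d$) is not $\UB$-flexibly BS-rigid. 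For the second reduction, observe that $H$ is finitely generated and residually finite in both cases, so \cref{prop:RF-not-weakly-stable} applies: were $H$ $\UB$-flexibly BS-rigid, it would be $\UB$-flexibly weakly stable. Thus everything comes down to showing that neither $F_m\times F_n$ nor $F_m\times\Z^d$ is $\UB$-flexibly weakly stable.

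The core of the proof, and the main obstacle, is this last claim, which is where the work of \cite{Ioana} (cf.\ \cite{BeckerLubotzky}) enters. The key input is that $F_m$ with $m\ge 2$ has property $\ptau$ with respect to a suitable family of finite-index normal subgroups: $F_m$ admits finite quotients $\pi_\ell\colon F_m\to Q_\ell$, $\inabs{Q_\ell}\to\infty$, whose Cayley graphs (with respect to the images of the free generators) form an expander family. This is exactly where the hypothesis $m\ge 2$ is used, since $F_1=\Z$ is amenable and admits no such family. Writing $H=F_m\times L$ with $L\in\{F_n,\Z^d\}$ infinite and residually finite, I would build an asymptotic homomorphism $f_\ell\colon H\to\Sym(\inabs{Q_\ell})$ by letting $F_m$ act on $Q_\ell$ through $\pi_\ell$ (a genuine action, asymptotically free since $\inabs{Q_\ell}\to\infty$, so that the almost-freeness hypothesis \eqref{eq:weak-almost-injective} is met) and letting a fixed infinite-order generator of $L$ act by a permutation $\tau_\ell$ that almost commutes with the $F_m$-action but carries a nonvanishing, perturbation-stable invariant that no genuine homomorphism close to $f_\ell$ can realize.

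The delicate point, and the step I expect to be hardest, is upgrading ``not stable'' to ``not $\UB$-flexibly weakly stable'': one must show this invariant survives the \emph{unbounded} flexibility $N_\ell\ge n_\ell$ permitted in the definition. Here the expansion of $Q_\ell$ does the work by rigidifying the $F_m$-factor—any genuine homomorphism of $H$ close to $f_\ell$ must make the $L$-part exactly commute with an action essentially equal to $\pi_\ell$, and the expander property forces the commuting configurations into a rigid, discrete family from which $\tau_\ell$ stays uniformly far even after adjoining arbitrarily many extra points. I would import this construction and its analysis from \cite{Ioana}, checking only that its almost-homomorphisms meet the weak-stability hypotheses \eqref{eq:weak-almost-hom} and \eqref{eq:weak-almost-injective}, so that the conclusion being contradicted is precisely $\UB$-flexible weak stability; combining this with the two reductions above completes the proof.
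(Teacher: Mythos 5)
Your proof is correct and follows essentially the same route as the paper: both arguments hinge on residual finiteness plus \cref{prop:RF-not-weakly-stable} to convert $\UB$-flexible BS-rigidity into $\UB$-flexible weak stability, and then invoke \cite[Thm.\ D]{Ioana} to rule the latter out. The only difference is that your explicit passage from $\Gamma$ to $H$ via \cref{prop:FiniteIndexSubgroup} is redundant, since Ioana's Theorem~D is already stated for groups merely \emph{containing} such a finite-index subgroup (which is how the paper applies it, directly to $\Gamma$, after noting that $\Gamma$ itself is residually finite); your sketch of Ioana's construction is a reasonable gloss but is black-boxed by the citation in any case.
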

\begin{proof}
The group $\Gamma$ is clearly residually finite. On the other hand,
\cite[Thm.\ D]{Ioana} states that a group satisfying either of \cref{cond:Ioana1} or \cref{cond:Ioana2} above is not UB-flexibly weakly stable. Thus, $\Gamma$
is not $\UB$-flexibly BS-rigid by Proposition \ref{prop:RF-not-weakly-stable}.
\end{proof}

\begin{prop}
\label{prop:BS}
Let $\Gamma$ denote the Baumslag--Solitar group $\BS\left(m,n\right)=\inang{\varX,\varY\mid \varX\varY^m=\varY^n\varX}$.
\begin{enumerate}
    \item If $m$ and $n$ are co-prime, then $\Gamma$ is BS-rigid.
    \item If $|m|=|n|\geq 2$ then $\Gamma$ is not UB-flexibly BS-rigid.
\end{enumerate}
\end{prop}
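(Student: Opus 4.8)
The plan is to reduce to an amenable group via the finite-residual criterion. Write $N=\langle\langle\varY\rangle\rangle$ for the normal closure of $\varY$ in $\Gamma=\BS(m,n)$, so that $\Gamma/N\cong\mathbb{Z}$ is generated by the image of $\varX$. First I would analyze an arbitrary finite quotient $q\colon\Gamma\to Q$. Setting $y=q(\varY)$ and $r=\ord(y)$, the relation $q(\varX)\,y^m\,q(\varX)^{-1}=y^n$ forces $\ord(y^m)=\ord(y^n)$, hence $\gcd(r,m)=\gcd(r,n)$; since $\gcd(m,n)=1$ this common value is $1$, so $m$ is invertible modulo $r$. Consequently $q(\varX)$ conjugates $y$ to the power $y^{nm^{-1}}\in\langle y\rangle$, so $q(\varX)$ normalizes $\langle y\rangle$ and therefore $q(N)=\langle y\rangle$ is cyclic, in particular abelian. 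Thus $N'\coloneqq[N,N]$ lies in $\ker q$ for every finite quotient $q$, giving $N'\subseteq R(\Gamma)$. Now $\Gamma/N'$ is an extension of $\Gamma/N\cong\mathbb{Z}$ by the abelian group $N/N'$, hence metabelian and in particular amenable; since $N'\subseteq R(\Gamma)$, the group $\Gamma/R(\Gamma)$ is a quotient of $\Gamma/N'$ and is therefore a finitely-generated amenable group. By \cref{thm:PositiveAmenable}(\ref{enum:PostiveAmenableBS-rigid}) it is BS-rigid, and by \cref{prop:DistinguishabilityByFiniteResidual} so is $\Gamma$.

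\textbf{Part (2).} Here I would exhibit a finite-index subgroup isomorphic to $F_k\times\mathbb{Z}$ with $k\geq 2$ and invoke \cref{prop:virtually-ioana-not-weakly}. Using $\BS(m,n)\cong\BS(-m,-n)$ (via $\varY\mapsto\varY^{-1}$), assume $m=n$ or $m=-n$ with $n=|m|\geq 2$. In either case $\langle\varY^n\rangle$ is infinite cyclic and normal in $\Gamma$, with $\Gamma/\langle\varY^n\rangle\cong\mathbb{Z}*\mathbb{Z}/n$, and $\Gamma$ acts on $\langle\varY^n\rangle$ through a homomorphism $\Gamma\to\mathbb{Z}/2$ (trivially when $m=n$, and by inversion through the parity of the $\varX$-exponent when $m=-n$). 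Passing to the index-$\leq 2$ subgroup $\Gamma_0$ on which this action is trivial, $\langle\varY^n\rangle$ becomes central in $\Gamma_0$, and $\Gamma_0/\langle\varY^n\rangle$ is a finite-index subgroup of $\mathbb{Z}*\mathbb{Z}/n$, hence virtually free.

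Next I would choose a finite-index free subgroup $F_k\leq\Gamma_0/\langle\varY^n\rangle$ with $k\geq 2$; this is possible because $\mathbb{Z}*\mathbb{Z}/n$ is non-elementary for $n\geq 2$, so free subgroups of large enough index have rank exceeding $1$ (an Euler-characteristic count makes this explicit). Its preimage $\Lambda$ in $\Gamma_0$ is a central extension of $F_k$ by $\mathbb{Z}$; since free groups have trivial second cohomology, this extension splits and $\Lambda\cong F_k\times\mathbb{Z}$. As $\Lambda$ has finite index in $\Gamma_0$, and hence in $\Gamma$, \cref{prop:virtually-ioana-not-weakly} (via \cref{cond:Ioana2} with $d=1$) shows that $\Gamma$ is not $\UB$-flexibly BS-rigid.

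\textbf{Expected main obstacle.} For part (1) the crux is the purely arithmetic observation that coprimality forces $q(N)$ to be cyclic in every finite quotient; once this is in hand, the finite-residual reduction and \cref{thm:PositiveAmenable} make the rest formal. For part (2) the real work is the construction of the finite-index $F_k\times\mathbb{Z}$ subgroup — in particular identifying the normal cyclic subgroup $\langle\varY^n\rangle$, passing to the index-$\leq 2$ subgroup to centralize it, and then verifying both that the central extension of $F_k$ splits and that the free rank can be taken $\geq 2$ — after which \cref{prop:virtually-ioana-not-weakly} closes the argument.
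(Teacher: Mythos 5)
Your proof is correct and its skeleton is the same as the paper's: part (1) reduces via \cref{prop:DistinguishabilityByFiniteResidual} and \cref{thm:PositiveAmenable}(1) to the amenability of $\Gamma/R(\Gamma)$, and part (2) reduces via \cref{prop:virtually-ioana-not-weakly} to exhibiting a finite-index subgroup isomorphic to $F_k\times\Z$ with $k\ge 2$. Where you differ is that you prove the two inputs the paper imports from the literature. In (1) the paper identifies $\Gamma/R(\Gamma)$ precisely as the metabelian group $\Z[\frac{1}{mn}]\rtimes\Z$; your argument is softer but sufficient: coprimality forces the image of $N=\langle\langle\varY\rangle\rangle$ to be cyclic in every finite quotient (the step $q(\varX)\,y\,q(\varX)^{-1}=y^{nm^{-1}}$ is legitimate precisely because your gcd computation gives $\gcd(\ord(y),m)=1$), so $[N,N]\le R(\Gamma)$ and $\Gamma/R(\Gamma)$ is a quotient of the metabelian group $\Gamma/[N,N]$ --- which buys you independence from any explicit computation of the finite residual. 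In (2) the paper cites Levitt for the virtually $F_k\times\Z$ structure; you construct it directly, and each step checks out: $\langle\varY^{|n|}\rangle$ is normal infinite cyclic with quotient $\Z\ast(\Z/|n|)$, it is centralized by an index-$\le 2$ subgroup, a finite-index free subgroup of rank at least $2$ exists because the Euler characteristic $\frac{1}{|n|}-1$ is negative, and the resulting central extension splits since $H^2(F_k;\Z)=0$. Both versions are complete; yours is longer but self-contained.
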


\begin{proof}
(1) When $m$ and $n$ are co-prime, the quotient $\Gamma/R\left(\Gamma\right)$ is isomorphic to the metabelian
group $\ZZ\left[\frac{1}{mn}\right]\rtimes_{\theta}\ZZ$, where $\theta(1)$ is the automorphism of $\ZZ\left[\frac{1}{mn}\right]$ given by multiplication by $\frac{m}{n}$.
In particular, $\Gamma/R\left(\Gamma\right)$ is amenable, and thus
$\Gamma$ is BS-rigid by Proposition \ref{prop:DistinguishabilityByFiniteResidual} and Theorem \ref{thm:PositiveAmenable}(1).

(2) When $|m|=|n|\geq 2$, the group $\Gamma$ has a finite-index subgroup isomorphic to $F_k\times\ZZ$ for some $k\geq 2$ (see., e.g., \cite[Proposition 2.6]{Levitt05}), and thus $\Gamma$ is not UB-flexibly BS-rigid by \cref{prop:virtually-ioana-not-weakly}.
\end{proof}
It is interesting to note that the proof of \cref{prop:BS}(1) shows that the single equation $\mathsf{XY^m=Y^n X}$ ($m,n$ co-prime) is testable by using the BS-rigidity of the non-finitely-presented group $\Gamma/R(\Gamma)$.

The question of whether $\BS(m,n)$ is (flexibly) BS-rigid in the cases not covered by \cref{prop:BS} is still open. On the other hand, it is known that $\BS(m,n)$ is stable if $|m|\leq 1$ or $|n|\leq 1$ \cite[Theorem 1.2(ii)]{BLM}, and not UB-flexibly stable otherwise \cite[Example 7.3]{ArzhantsevaPaunescu}, \cite[Corollary B]{Ioana}.

\begin{prop}
For $n\geq 3$ the braid group $B_n$ and pure braid group $PB_n$ are not UB-flexibly BS-rigid.
\end{prop}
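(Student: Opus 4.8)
The plan is to reduce the statement about braid groups to Proposition~\ref{prop:virtually-ioana-not-weakly} by exhibiting an appropriate finite-index subgroup of the required form. Recall that Proposition~\ref{prop:virtually-ioana-not-weakly} tells us that any group containing a finite-index subgroup isomorphic to $F_m\times F_n$ (with $m,n\ge 2$) or to $F_m\times\Z^d$ (with $m\ge 2$, $d\ge 1$) fails to be $\UB$-flexibly BS-rigid. So the entire task is to locate such a subgroup inside $B_n$ and $PB_n$ for each $n\ge 3$, and then invoke that proposition as a black box.

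First I would treat the smallest case $n=3$ directly, since it is the cleanest. The pure braid group $PB_3$ is well known to be isomorphic to $F_2\times\Z$: its center is infinite cyclic (generated by the full twist), and the quotient by the center is free of rank $2$, with the extension splitting. Thus $PB_3\cong F_2\times\Z$ already satisfies \cref{cond:Ioana2} with $m=2$, $d=1$, so it is not $\UB$-flexibly BS-rigid outright. Since $PB_3$ is a finite-index (normal) subgroup of $B_3$---indeed $[B_3:PB_3]=3!=6$---the braid group $B_3$ also contains a finite-index subgroup of the required form, and hence $B_3$ is not $\UB$-flexibly BS-rigid as well.

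For general $n\ge 3$ I would pass to a finite-index subgroup of $PB_n$ that visibly contains an $F_m\times F_n$ or $F_m\times\Z^d$ factor. The standard tool is the Fadell--Neuwirth fibration, which exhibits $PB_n$ as an iterated extension of free groups $F_{n-1},F_{n-2},\dotsc,F_1$; in particular $PB_n$ contains $PB_3$ (on any three fixed strands, with the remaining strands pulled straight) as a retract/subgroup, giving an $F_2\times\Z$ subgroup directly. Alternatively, and perhaps more transparently, one isolates two disjoint blocks of strands: the pure braiding on strands $\{1,2\}$ and on strands $\{3,4\}$ (when $n\ge 4$) generate two commuting copies of $\Z$, and combined with enough free braiding one produces a commuting product containing $F_2\times\Z$. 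Since $PB_n$ is finite-index in $B_n$ (of index $n!$) and any finite-index subgroup of $PB_n$ is finite-index in $B_n$, exhibiting the subgroup inside $PB_n$ simultaneously handles $B_n$.

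The main obstacle I anticipate is purely bookkeeping: verifying that the chosen commuting free or free-abelian subgroups really do generate an \emph{internal direct product} $F_m\times F_n$ or $F_m\times\Z^d$ (i.e., that the intersection is trivial and the two factors commute elementwise), rather than merely a subgroup into which such a product maps. The cleanest route avoiding delicate commutator computations is to lean on the $n=3$ case: since $PB_3\cong F_2\times\Z$ embeds as a (finite-index-in-its-closure) subgroup of $PB_n$ via the strand-forgetting structure, and $PB_n$ is finite-index in $B_n$, one gets the conclusion uniformly. I would therefore organize the proof so that the only genuine verification is the classical identification $PB_3\cong F_2\times\Z$ together with the index computation $[B_n:PB_n]=n!$, after which \cref{prop:virtually-ioana-not-weakly} finishes the argument immediately.
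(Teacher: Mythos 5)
Your argument is sound for $n=3$: identifying $PB_3\cong F_2\times\Z$ and using $[B_3:PB_3]<\infty$ lets you apply \cref{prop:virtually-ioana-not-weakly} to both $PB_3$ and $B_3$, and this is exactly the paper's starting point. The genuine gap is the passage to $n\ge 4$. \cref{prop:virtually-ioana-not-weakly} requires a \emph{finite-index} subgroup isomorphic to $F_m\times F_{m'}$ or $F_m\times\Z^d$, but every copy of $F_2\times\Z$ you propose to locate inside $PB_n$ (the image of $PB_3$ on three chosen strands, or commuting blocks of strands) has infinite index once $n\ge 4$. The Fadell--Neuwirth sequences you cite give $PB_n\cong F_{n-1}\rtimes PB_{n-1}$, a \emph{semidirect} product with nontrivial action, not a direct product, and since $PB_n$ is torsion-free of cohomological dimension $n-1$, it cannot contain a finite-index subgroup isomorphic to $F_2\times\Z$ (which has cohomological dimension $2$) for any $n\ge 4$. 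The phrase ``finite-index-in-its-closure'' does not rescue this: no hypothesis of \cref{prop:virtually-ioana-not-weakly} is verified for $PB_n$ itself, and merely containing $F_2\times\Z$ as an infinite-index subgroup proves nothing (stable groups such as free groups also contain infinite-index copies of $\Z$). So for $n\ge 4$ your strategy cannot be completed as stated.

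The missing ingredient is \cref{prop:quotient-fg}, which you never invoke. The paper propagates non-rigidity from $PB_3$ to $PB_n$ through \emph{quotients} rather than subgroups: the strand-forgetting surjection $PB_m\twoheadrightarrow PB_{m-1}$ has finitely generated (free) kernel, so the contrapositive of \cref{prop:quotient-fg}(2) shows that if $PB_{m-1}$ is not UB-flexibly BS-rigid then neither is $PB_m$; iterating from the base case $PB_3$ handles all $n\ge 3$. The final step for $B_n$ is then \cref{prop:FiniteIndexSubgroup} (with $\nu=\nu'=\infty$ in the UB-flexible case): non-rigidity of the finite-index subgroup $PB_n$ forces non-rigidity of $B_n$. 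This last reduction agrees with yours, but the core of the induction on $n$ must be rebuilt around the quotient proposition rather than around finding product subgroups of finite index.
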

\begin{proof}
The group $PB_3$ is isomorphic to $F_2\times \ZZ$, and thus, by \cref{prop:virtually-ioana-not-weakly}, it is not UB-flexibly rigid.
Now, $PB_m$ surjects onto $PB_{m-1}$ with a finitely-generated kernel for all $m\geq 3$. Thus, for $m\geq 3$, repeated applications of \cref{prop:quotient-fg} imply that $PB_m$ is not UB-flexibly BS-rigid.
Finally, for $m \geq 3$, $B_m$ is not UB-flexibly BS-rigid since $PB_m$ is a finite-index subgroup of $B_m$ and by \cref{prop:FiniteIndexSubgroup}.
\end{proof}
It is interesting to note that the $(2,3)$-torus knot group $\inang{\varX,\varY \mid \varX^2=\varY^3}$ is not UB-flexibly BS-rigid because it is isomorphic to the braid group $B_3$. It would be interesting to study the stability and BS-rigidity of torus knot groups further.

Next we show that $\Aut\left(F_{n}\right)$, $\Out\left(F_{n}\right)$ ($n\ge 3)$ and $\MCG\left(g\right)$ ($g\ge 3$)
are not BS-rigid (but note that the questions of their flexible BS-rigidity
remains open). Note that $\Out(F_2)$ is BS-rigid, and even stable, since, by \cite{LazarovichLevit}, virtually-free groups are stable.

\begin{prop}~
\begin{enumerate}
    \item For $n\geq3$, the groups $\Aut\left(F_{n}\right)$ and $\Out\left(F_{n}\right)$ are not BS-rigid.
    \item For $g\geq3$, the mapping class group $\MCG\left(g\right)$ is not
BS-rigid.
\end{enumerate}
\end{prop}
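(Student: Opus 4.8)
The plan is to realize each of the three families as a group extension with finitely-generated kernel over a higher-rank arithmetic group, and to combine \cref{thm:NegativeKazhdan}(1) with \cref{prop:quotient-fg}. Concretely, for each finitely-generated group $\Gamma$ appearing in the statement I will produce a surjection $\Gamma \twoheadrightarrow \Lambda$ such that: (i) $\Lambda$ has Kazhdan's property $\T$, hence property $\ptau$; (ii) $\Lambda$ is infinite and residually finite, and therefore, having finite quotients of unbounded size, possesses infinitely many finite-index subgroups; and (iii) the kernel $N = \ker(\Gamma \to \Lambda)$ is finitely generated as a group. Given such data, \cref{thm:NegativeKazhdan}(1) shows that $\Lambda$ is not BS-rigid, and since $\Gamma/N \cong \Lambda$ with $N$ a finitely-generated normal subgroup of the finitely-generated group $\Gamma$, the contrapositive of \cref{prop:quotient-fg}(2) shows that $\Gamma$ is not BS-rigid.

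For $\Aut(F_n)$ and $\Out(F_n)$ with $n \geq 3$, I take $\Lambda = \GL_n(\ZZ)$, with the map induced by the action on the abelianization $F_n^{\mathrm{ab}} \cong \ZZ^n$. This map is surjective, and it factors through $\Out(F_n)$ because inner automorphisms act trivially on homology. The target $\GL_n(\ZZ)$ has property $\T$ for $n \geq 3$ by Kazhdan's theorem, as it contains $\SL_n(\ZZ)$ with finite index, and it is infinite and residually finite. The kernel of $\Aut(F_n) \to \GL_n(\ZZ)$ is the group $\mathrm{IA}_n$ of automorphisms acting trivially on $H_1$, which is finitely generated by a classical theorem of Magnus; the kernel of $\Out(F_n) \to \GL_n(\ZZ)$ is the image of $\mathrm{IA}_n$ in $\Out(F_n)$, i.e.\ $\mathrm{IA}_n/\Inn(F_n)$, which is a quotient of a finitely-generated group and hence finitely generated. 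Both assertions of item (1) then follow from the scheme above.

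For $\MCG(g)$ with $g \geq 3$, I take $\Lambda = \Sp_{2g}(\ZZ)$ together with the symplectic representation $\MCG(g) \to \Sp_{2g}(\ZZ)$ arising from the action on $H_1(\Sigma_g;\ZZ) \cong \ZZ^{2g}$ preserving the intersection form; this representation is surjective. For $g \geq 2$ the group $\Sp_{2g}(\ZZ)$ is a lattice in the higher-rank simple Lie group $\Sp_{2g}(\RR)$ and therefore has property $\T$, and it is again infinite and residually finite. Its kernel is by definition the Torelli group $\mathcal{I}_g$, which is finitely generated precisely for $g \geq 3$ by Johnson's theorem. Item (2) then follows by the same two-step argument.

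The conceptual content lies entirely in the external inputs, and the two thresholds arise from different places. For $\Aut(F_n)$ and $\Out(F_n)$ the restriction $n \geq 3$ comes from the property $\T$ input, since $\GL_2(\ZZ)$ is virtually free and has no property $\T$; this is consistent with $\Out(F_2) \cong \GL_2(\ZZ)$ being BS-rigid, as already noted in the text. Note that routing through $\GL_n(\ZZ)$ lets me use only Kazhdan's classical result and sidestep the far deeper (and, for mapping class groups, unavailable) property $\T$ for $\Aut(F_n)$ itself. For $\MCG(g)$, by contrast, $\Sp_{2g}(\ZZ)$ already has property $\T$ for $g \geq 2$, and the restriction $g \geq 3$ comes instead from the finite-generation hypothesis of \cref{prop:quotient-fg}: Johnson supplies finite generation of the Torelli kernel exactly for $g \geq 3$, while Mess showed it fails for $g = 2$. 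I therefore expect the main point to get right to be precisely verifying the finite generation of the kernels (Magnus and Johnson), which is the only genuinely non-formal ingredient; everything else is a direct application of the tools established earlier in the paper.
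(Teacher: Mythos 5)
Your proposal is correct and follows essentially the same route as the paper: surjecting $\Aut(F_n)$ and $\Out(F_n)$ onto $\GL_n(\ZZ)$ with finitely-generated kernel (Magnus) and $\MCG(g)$ onto $\Sp_{2g}(\ZZ)$ with the finitely-generated Torelli kernel (Johnson), then applying \cref{thm:NegativeKazhdan}(1) to the quotient and \cref{prop:quotient-fg}(2) to pull non-BS-rigidity back. The extra discussion of where the thresholds $n\geq 3$ and $g\geq 3$ come from is accurate but not needed beyond what the paper records.
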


\begin{proof}~
\begin{enumerate}
    \item The groups $\Aut\left(F_{n}\right)$ and $\Out\left(F_{n}\right)$ surject onto $\GL_n(\ZZ)$ with a finitely-generated kernel \cite{Magnus}. But $\GL_n(\ZZ)$ is not BS-rigid by \cref{thm:NegativeKazhdan}(1) because it is infinite, residually finite, and has property $\T$. Thus $\Aut(F_n)$ and $\Out(F_n)$ are not BS-rigid by \cref{prop:quotient-fg} and \cref{thm:NegativeKazhdan}.
    \item The group $\MCG(g)$ surjects onto $\Sp_{2g}(\ZZ)$
        with a kernel $N$, known as the Torelli subgroup, which is a finitely-generated group
        \cite{Johnson_Torelli}. But $\Sp_{2g}(\ZZ)$
        is not BS-rigid because it is infinite, residually finite, and has
        property $\T$. Thus $\MCG(g)$ is not BS-rigid by \cref{prop:quotient-fg} and \cref{thm:NegativeKazhdan}.
\end{enumerate}
\end{proof}

\section{Suggestions for further research}
Here we discuss interesting questions for further study, including some problems that appear in \cite[Sec.\ 2.2.]{BLM}, as well as new directions.

\subsection{Query complexity}\label{sec:QueryComp}
Let $E$ be a finite BS-rigid subset of $F_S$. Then $E$ is testable.
Hence, for each $\eps > 0$, there is a $\inparen{\Sol_E,\Sol_E^{\ge \eps}}$-separator $\cM_\eps$. From an algorithmic perspective, it is desirable to minimize the query complexity of $\cM_\eps$ as a function of $\eps$ (see also \cite[Sec.\ 2.1.4]{BLM}).

In \cite{BeckerMosheiff}, it is shown that if the group $\Gamma = \inang{S\mid E}$ is abelian then $\Gamma$ is \deffont{polynomially stable}. Namely, there exists $D=D(\Gamma) \ge 1$ such that $\inparen{\Sol_E,\Sol_E^{\ge \eps}}$ is separable by a \SAS{} algorithm with query complexity $O_E\inparen{\inparen{\frac 1\eps}^D}$. Finding the optimal $D$ (known as the \deffont{degree of polynomial stability}) for a given abelian group $\Gamma$ is an open problem. In particular, even the following problem is open.
\begin{problem}
Determine the degree of polynomial stability of $\Z^2$.
\end{problem}
In \cite{BeckerMosheiff}, it is shown that, for $k\ge 2$, the degree of polynomial stability of $\Z^k$, denoted $D_k$, satisfies $k\le D_k\le 2^{O(k)}$. In the $O(\eps n$)-flexible model, the best known bounds are $2 \le D_k \le 2^{O(k)}$.

The following basic problems are also open.
\begin{problem}
Is there a non-abelian polynomially-stable group?
\end{problem}
\begin{problem}
Is there a stable group that is not polynomially stable?
\end{problem}

The flexible variants of each of the above problems are also interesting and open. It it worth noting that in the context of stability in unitary groups, the cohomological method of \cite{DGLT} implies stability with linear rate whenever it applies (see also \cite{MoralesGlebsky}).

In the realm of flexible stability, the main result of \cite{LLM} implies that when $\Gamma = \inang{S\mid E}$ is a surface group of genus at least $2$, there is some  $\nu(\eps,n) \le O(\eps n)$ such that $\inparen{\Sol_E,\Sol_E^{\ge \eps,\nu\flex}}$ is separable by a \SAS{} separator with query complexity $O\inparen{\frac{1}{\eps} \log\frac{1}{\eps}}$.

It is interesting to bound the query complexity of other testable subsets of $F_S$ as well, including those that are not stable. The following conjecture appears in \cite{BLM}.
\begin{conjecture}
Suppose that $\Gamma = \inang{S\mid E}$ is a finitely-presented amenable group, and thus $E$ is testable by \cref{thm:PositiveAmenable,thm:LSMUniversal}. Then, $E$ is testable with query complexity bounded from above by some function of $|S|$, $\sum_{w\in E}|w|$, and the F\o lner function of $\Gamma$.
\end{conjecture}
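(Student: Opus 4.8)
The plan is to make the proof of BS-rigidity for amenable groups (\cref{thm:PositiveAmenable}(1)) quantitative and to read off the resulting query complexity of the \LSM{} separator. By the flexible/non-flexible version of \cref{thm:LSMUniversal}, for each $\eps > 0$ the query complexity is governed by the three parameters of \LSM: the radius $r$ defining the word set $P = P(\eps)$ (which I take to be all words of length $\le r$), the proximity parameter $\delta = \delta(\eps)$, and the repetition factor $s = s(\eps)$. Since each sample costs $O\inparen{|P|}$ queries (building the radius-$r$ ball of the sampled vertex by breadth-first search), and since estimating the empirical distribution $N^{\Emp}_{\ol\sigma,P}$ to within total-variation error $\delta/3$ with probability $0.99$ requires a number of samples $s$ bounded in terms of $|P|$ and $1/\delta$, it suffices to bound $r(\eps)$ and $1/\delta(\eps)$ by functions of $|S|$, $\sum_{w\in E}|w|$, and the F\o lner function; everything else is then a function of these quantities, because $|P| \le (2|S|)^{r+1}$.

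First I would isolate $r(\eps)$ and $\delta(\eps)$ via a \emph{correction statement}, which is the contrapositive of the \cref{def:BS-rigid} separation inequality: if the radius-$r$ local statistics of an approximate solution $\ol\sigma \in \Sol_E^{\ge \eps}(n)$ are $\delta$-close to those of some genuine solution $\ol\tau \in \Sol_E(n)$, then $\ol\sigma$ lies within $\eps$ of $\Sol_E(n)$. To establish this quantitatively I would replace the qualitative Newman--Sohler input \cite{NewmanSohler2013} by an explicit Ornstein--Weiss \cite{OrnsteinWeiss} $\eps$-tiling of $\Gamma$ by finitely many F\o lner tiles, each of boundary ratio at most $\eps$ and hence of diameter at most $D(\eps)$, where $D(\eps)$ is controlled by the F\o lner function of $\Gamma$. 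Because $\Gamma$ is amenable, these tiles remain F\o lner in every Schreier graph of a $\Gamma$-action, so they furnish a common family of local patterns visible inside balls of radius $r \gtrsim D(\eps)$. Matching the radius-$r$ statistics then lets me partition almost all vertices of $G_{\ol\sigma}$ into copies of these tiles, rewire each tile internally to agree with a genuine partial $\Gamma$-action (possible once $r \ge \max_{w\in E}|w|$, so that every relation in $E$ is witnessed inside a ball), and absorb the mismatches, which are confined to tile boundaries. The total Hamming cost is at most the boundary fraction, which is $O(\eps)$; choosing $\delta$ small enough relative to $\eps$ and the number of tile types forces the separation.

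The main obstacle is precisely this quantitative correction step, since the difficulty lies entirely in making the tiling radius \emph{explicit} rather than in the existence of a separator. Two subtleties require care. First, one needs uniform control: the tiles must be simultaneously F\o lner in \emph{all} Schreier graphs of $\Gamma$-actions (equivalently, all invariant random subgroups of $\Gamma$), which is where amenability of $\Gamma$ is used to transfer its F\o lner function to every such action. Second, the rewiring must reconcile the relation length $\sum_{w\in E}|w|$ with the tiling radius, because relations are only closed up inside balls of radius at least $\max_{w\in E}|w|$; this forces $r(\eps) \ge \max\inset{D(\eps),\,\max_{w\in E}|w|}$ and explains the appearance of $\sum_{w\in E}|w|$ in the final bound. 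Assembling these estimates produces $r(\eps)$, $\delta(\eps)$, and $s(\eps)$ as explicit functions of $|S|$, $\sum_{w\in E}|w|$, and the F\o lner function of $\Gamma$, and hence the desired query-complexity bound.
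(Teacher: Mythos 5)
The statement you are addressing is stated in the paper as a \emph{conjecture}: the paper offers no proof of it, and explicitly lists it among open problems on query complexity. So there is no argument of the authors to compare yours against; the only question is whether your proposal actually closes the problem, and it does not. What you have written is a reasonable reconstruction of the strategy that presumably motivates the conjecture --- quantify the Ornstein--Weiss tiling and the Newman--Sohler correction that underlie \cref{thm:PositiveAmenable}(1), then read off the \LSM{} parameters --- but the step you yourself flag as ``the main obstacle'' is asserted rather than carried out, and it is exactly the content of the conjecture. Concretely: (i) Ornstein--Weiss tiles $\Gamma$ (or a genuine $\Gamma$-action), but an approximate solution $\ol\sigma\in\Sol_E^{\ge\eps}(n)$ is not a $\Gamma$-action, so you must tile a graph that only \emph{locally} resembles Schreier graphs of $\Gamma$; the quantitative hyperfiniteness of such graphs, uniformly over all of them, is not an automatic consequence of the F\o lner function of $\Gamma$. (ii) The claim that the tiles ``remain F\o lner in every Schreier graph'' needs care: images of F\o lner sets under the orbit maps do have small boundary, but they can collapse in cardinality and multiplicity, so controlling the number of tile types, their overlaps, and the gluing data uniformly over all actions (equivalently, over all invariant random subgroups) is a genuine issue, not a formality. (iii) The ``rewire each tile and absorb the mismatches'' step must produce a global genuine element of $\Sol_E(n)$ (or of $\Sol_E(N)$ with $N$ controlled), i.e., you must glue partial $\Gamma$-actions on tiles into an actual action; this realization step is where the Newman--Sohler input enters in the qualitative proof, and no quantitative version of it in terms of the F\o lner function is currently known.

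In short, your outline identifies the right ingredients and the right bookkeeping (it is correct that once $r(\eps)$ and $\delta(\eps)$ are bounded, $|P|\le (2|S|)^{r+1}$ and a Chernoff bound give $s(\eps)$, hence the query complexity), but the passage from ``matching radius-$r$ statistics'' to ``$\ol\sigma\in\Sol_E^{<\eps}$'' with $r$ and $1/\delta$ bounded by the F\o lner function is precisely what remains open. A complete solution would need either an effective Ornstein--Weiss/Connes--Feldman--Weiss theorem applicable to almost-actions with uniform constants, or a different route entirely; as written, the proposal restates the conjecture's difficulty rather than resolving it.
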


\subsection{Time complexity}\label{sec:TimeComp}
The time complexity of \SAS{} is proportional to its query complexity. In particular, for a stable group, the \SAS{} algorithm provides separators whose time complexity does not depend on the size $n$ of the input permutations.

The case for a BS-rigid instable subset $E$ of $F_S$ might be different. The time complexity of \LSM{} might be higher than its query complexity because of the computation involved the evaluation of \eqref{eq:LSMCondition} in \cref{alg:LSM}. 
\begin{problem}
Bound the time complexity for various instable BS-rigid finite subsets $E\subseteq F_S$. In particular, it is interesting to check, for various such sets $E$, whether the time complexity of the separators can be made independent on the size $n$ of the input permutations.
\end{problem}

\subsection{Allowing the number of queries to depend on $n$}
Consider a relaxed version of algorithmic separation (\cref{def:separation}) in which the maximum number of queries made by the separating algorithm is $q(\eps,n)$ for some $q:\R_{>0}\times \N\to \N$. Allowing $q(n) \ge n$ is not interesting, since the input can then be read in its entirety, and so every two disjoint subsets of $\Sym(n)^k$ can be separated. A more modest dependence on $n$ is interesting, and may result in natural, weaker but non-trivial notions of testability. In particular, consider the following problem.

\begin{problem}
Let $E\subseteq F_S$ be a finite set such that $\Gamma = \inang{S\mid E}$ has property $\ptau$ and infinitely many finite-index subgroups (by \cref{thm:LSMUniversal,thm:NegativeKazhdan}, $E$ is not testable in the usual sense). Are there examples where $E$ is testable with $q(\eps,n)$ queries for some function $q$ with small dependence on $n$? Asymptotically, how small can we make this dependence?
\end{problem}

\subsection{Fixed radius BS-rigidity}
Fixed radius BS-rigidity is a stricter variant of BS-rigidity (\cref{def:BS-rigid}). In this stricter definition, the set $P$ must be fixed, and may not depend on $\eps$.
\begin{definition}
	A subset $E$ of $F_S$ is \deffont{fixed radius BS-rigid} (or \deffont{FR-BS-rigid} for short) if there exists a finite set $P\subseteq F_S$ such that for every $\eps > 0$ there is $\delta = \delta(\eps) > 0$ satisfying $\dTV\inparen{N_{\ol \sigma,P}, N_{\ol \tau,P}} \ge \delta$ for every $n\in \N$, $\ol \sigma \in \Sol_{E}^{\ge \eps}(n)$ and $\ol \tau \in \Sol_{E}(n)$. In this case we also say that $E$ is $\inparen{P,\delta(\eps)}$-FR-BS-rigid.
\end{definition}

FR-BS-rigidity is a natural notion, which also has an algorithmic motivation: If E is FR-BS-rigid then it is testable by family of \LSM{} separators with $P$ constant.
This is a weaker variant of \deffont{proximity oblivious testability} (see \cite[Def.\ 1.7]{Goldreich}). More details about this connection are given in \cite[Sec.\ 2.2.6]{BLM}.

Note that every stable subset $E$ of $F_S$ is $(E,\delta(\eps))$-FR-BS-rigid for some function $\delta\colon \RR_{>0}\to\RR_{>0}$.\begin{problem}
Is there an instable FR-BS-rigid finitely-generated group?
\end{problem}

\section{Stability, BS-rigidity and invariant random subgroups}\label{sec:IRS}
Let $\Gamma$ be a finitely-generated group.
Here we recall relevant definitions pertaining to invariant random subgroups, and prove equivalent criteria for stability and BS-rigidity. We consider the space $\Sub(\Gamma)$ of all subgroups of $\Gamma$ with the topology induced from the inclusion of $\Sub(\Gamma)$ in $\{0,1\}^\Gamma$, where the latter is endowed with the product topology, and $\inset{0,1}$ with the discrete topology.
For finite subsets $A$ and $B$ of $\Gamma$, the set $C_{A,B}=\{H\in\Sub(\Gamma)\mid H\cap A=B\}$ is clopen in $\Sub(\Gamma)$. The collection of all sets $C_{A,B}$ of this form is a basis for the topology on $\Sub(\Gamma)$.
An \deffont{invariant random subgroup} of $\Gamma$ is a Borel regular probability measure $\mu$ on $\Sub(\Gamma)$ such that $\mu(\gamma A \gamma^{-1})=\mu(A)$ for every Borel subset $A$ of $\Sub(\Gamma)$ and $\gamma\in\Gamma$. The space of all invariant random subgroups of $\Gamma$ is denoted by $\IRS(\Gamma)$. This space is equipped with the weak-$\ast$ topology, i.e., a sequence $(\mu_n)_{n=1}^\infty$ of measures in $\IRS(\Gamma)$ converges to $\mu\in\IRS(\Gamma)$ if and only if $\lim_{n\to\infty}\mu_n(C_{A,B})=\mu(C_{A,B})$ for all finite subsets $A$ and $B$ of $\Sub(\Gamma)$.

An action of $\Gamma$ on a finite set $X$ gives rise to an invariant random subgroup $\mu_X$ given by $\mu_X(A)=\PP(\stab_\Gamma(x)\in A)$, where $x$ is sampled uniformly from $X$.
The measure $\mu_X$ is called the \deffont{random stabilizer} of $X$.
We say that a sequence $(X_n)_{n=1}^\infty$ is \deffont{convergent} if the sequence of measures $\mu_{X_n}$ converges to some measure $\mu$. In this case, we also say that the sequence $(X_n)_{n=1}^\infty$ \deffont{converges} to $\mu$ and write $X_n\to\mu$. We write $\IRS_{\findex}(\Gamma)$ for the space of invariant random subgroups of $\Gamma$ supported on finite-index subgroups. Then $\mu_{X_n}$ as above belongs to $\IRS_{\findex}(\Gamma)$. An invariant random subgroup $\mu\in\IRS(\Gamma)$ is \deffont{co-sofic} if it is the limit of a sequence of elements of $\IRS_{\findex}(\Gamma)$.
In this context, we should mention the Aldous--Lyon conjecture, which states that every invariant random subgroup of a finitely-generated free group is co-sofic \cite{AldousLyons}.

Fix an epimorphism $\pi\colon F_S\to\Gamma$, where $F_S$ is a finitely-generated free group. The embedding of $\Sub(\Gamma)$ in $\Sub(F_S)$, sending $H$ to $\pi^{-1}(H)$, gives rise to an embedding of $\IRS(\Gamma)$ in $\IRS(F_S)$. Henceforth, we shall view $\IRS(\Gamma)$ as closed subspace of $\IRS(F_S)$.
Note that it is possible for an invariant random subgroup $\mu\in\IRS(\Gamma)$ to be co-sofic in $F_S$ but not in $\Gamma$, i.e., $\mu$ may be a limit of elements of $\IRS_{\findex}(F_S)$, but not a limit of elements of $\IRS_{\findex}(\Gamma)$.

\subsection{Stability and BS-rigidity in terms of invariant random subgroups}\label{sec:IRSFormulation}
Here we reformulate the notions of stability and BS-rigidity in terms of invariant random subgroups (\cref{prop:IRSFormulation}).

Given a group $\Gamma$, a \deffont{$\Gamma$-set} is a set equipped with an action of $\Gamma$. For two finite $F_S$-sets $X$ and $Y$ with $|X|\le |Y|$, let
\begin{equation}\label{eq:FSetDistance}
    d_S(Y,X) = d_S(X,Y)=\min_{f\colon X\to Y}\frac{1}{|X|}\sum_{s\in S}\sum_{x\in X}{\bf 1}_{f(sx)\neq sf(x)}\ecomma
\end{equation}
where the minimum is taken over all injections $f$ from $X$ to $Y$.

\begin{defn}
\label{def:of-all-defs}Let $(X_{n})_{n=1}^\infty$
be a convergent sequence of finite $F_S$-sets, $X_n\to\mu\in\IRS(F_S)$. Then:
\begin{enumerate}
\item $(X_{n})_{n=1}^{\infty}$ is a \deffont{stability challenge}
for $\Gamma$ if $\mu\in\IRS\left(\Gamma\right)$.
\item $(X_{n})_{n=1}^{\infty}$ is a \deffont{BS-rigidity challenge}
for $\Gamma$ if $\mu\in\IRS(\Gamma)$ and $\mu$ is co-sofic (i.e., co-sofic in $\Gamma$).
\item $(X_{n})_{n=1}^{\infty}$ is a \deffont{sofic approximation}
for $\Gamma$ if $\mu\in\IRS(\Gamma)$ and $\mu$ is the Dirac measure concentrated on the trivial subgroup of $\Gamma$.
\item Two sequences $(X_{n})_{n=1}^\infty$ and $(Y_{n})_{n=1}^\infty$
of finite $F_S$-sets are \deffont{equivalent} if $d_{S}\left(X_{n},Y_{n}\right)\to0$.
\item A sequence of finite $\Gamma$-sets $(Y_{n})_{n=1}^{\infty}$
is a \deffont{$\nu$-flexible solution} (or simply \deffont{solution} when $\nu = 0$) for $(X_{n})_{n=1}^{\infty}$
if it is equivalent to $(X_{n})_{n=1}^\infty$, and there exists a sequence $(\eps_n)_{n=1}^\infty$, $\eps_n \to 0$ such that $|X_n|\le |Y_n|\le |X_n|+\nu(\eps_n,|X_n|)$ for all $n$. 
\end{enumerate}
\end{defn}

\begin{prop}\label{prop:IRSFormulation}
Let $\Gamma$ be a finitely-generated group.
\begin{enumerate}
    \item $\Gamma$ is stable (resp.\ $\nu$-flexibly stable) if and only if every stability challenge for $\Gamma$ has a solution (resp.\ $\nu$-flexible solution).
    \item $\Gamma$ is BS-rigid (resp.\ $\nu$-flexibly BS-rigid) if and only if every BS-rigidity challenge
    for $\Gamma$ has a solution (resp.\ $\nu$-flexible solution).
    \item $\Gamma$ is weakly stable (resp.\ $\nu$-flexibly weakly stable) if and only if every sofic approximation for $\Gamma$ has a solution (resp.\ $\nu$-flexible solution).
\end{enumerate}
\end{prop}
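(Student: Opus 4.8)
The plan is to reduce all three equivalences, in both the strict and the $\nu$-flexible forms, to a single dictionary between sequences of approximate homomorphisms $\Gamma\to\Sym(n)$ and convergent sequences of finite $F_S$-sets; the three cases are then distinguished only by a condition imposed on the limiting invariant random subgroup $\mu$. First I would fix an epimorphism $\pi\colon F_S\to\Gamma$ and identify a finite $F_S$-set $X$ on $[n]$ with its generator tuple $\ol\sigma$, so that $\stab_{\ol\sigma}(x)$ is the point stabilizer and the convergence $X_n\to\mu$ of \cref{def:of-all-defs} is literally weak-$*$ convergence of the random stabilizers $\mu_{X_n}$. Given an asymptotic homomorphism $(f_\ell)$ satisfying \eqref{eq:AsymptoticHomomorphism}, I set $\sigma_i^{(\ell)}=f_\ell(\pi(s_i))$; using bi-invariance of $d$ together with $f_\ell(1)\to\id$ (from $d(f_\ell(1),f_\ell(1)^2)=d(\id,f_\ell(1))\to 0$), $f_\ell(\gamma^{-1})\to f_\ell(\gamma)^{-1}$, and telescoping over a fixed word, every relator $w\in E$ is asymptotically fixed, so any weak-$*$ subsequential limit $\mu$ of $\mu_{X_\ell}$ gives full mass to subgroups containing $E$, i.e.\ $\mu\in\IRS(\Gamma)$. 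Conversely, from $X_\ell\to\mu\in\IRS(\Gamma)$ I would choose a set-theoretic section $\gamma\mapsto w_\gamma$ of $\pi$ and put $f_\ell(\gamma)=w_\gamma(\ol\sigma^{(\ell)})$; since $w_{\gamma_1\gamma_2}^{-1}w_{\gamma_1}w_{\gamma_2}\in\ker\pi$ lies in $\mu$-almost every subgroup, it is asymptotically fixed, which is exactly \eqref{eq:AsymptoticHomomorphism}. This is the correspondence already underlying \cref{prop:stability-defs,prop:BS-rigidViaHoms}, and I would reuse it.

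The other half of the dictionary identifies a genuine homomorphism $h\colon\Gamma\to\Sym(N)$ with a finite $\Gamma$-set $Y$, and matches asymptotic closeness with equivalence of sequences. If $d(f_\ell(\gamma),h_\ell(\gamma))\to 0$ for all $\gamma$, then taking the identity labelling on generators gives $d_S(X_\ell,Y_\ell)\le\sum_i d(f_\ell(\pi(s_i)),h_\ell(\pi(s_i)))\to 0$. Conversely, near-equivariant injections witnessing $d_S(X_\ell,Y_\ell)\to 0$ can be used to relabel $Y_\ell$ so that $h_\ell$ agrees with $f_\ell$ on generators up to $o(1)$, and a telescoping estimate then propagates closeness to all of $\Gamma$. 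In the flexible model $Y_\ell$ is larger and the injection is proper, so closeness is measured in the extended metric; the same argument works once one verifies the quasi-triangle inequality $d(ab,a'b')\le d(a,a')+d(b,b')$ for that metric.

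With the dictionary in hand, the three extra hypotheses in the homomorphism formulations correspond to three conditions on $\mu$. For stability there is no extra hypothesis, so every asymptotic homomorphism is a stability challenge. For BS-rigidity, the hypothesized homomorphisms $g_\ell$ with $\dTV(N_{f_\ell,Q},N_{g_\ell,Q})\to 0$ for all finite $Q$ are finite $\Gamma$-sets $Z_\ell$ whose random stabilizers share the limit $\mu$; since $\mu_{Z_\ell}\in\IRS_{\findex}(\Gamma)$, this is precisely the assertion that $\mu$ is co-sofic, i.e.\ a BS-rigidity challenge (here I use the routine fact that every element of $\IRS_{\findex}(\Gamma)$ is a weak-$*$ limit of random stabilizers of finite $\Gamma$-sets). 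For weak stability, the almost-injectivity condition \eqref{eq:weak-almost-injective} says $\mu$ charges no subgroup containing a nontrivial $\gamma$, i.e.\ $\mu=\delta_{\{1\}}$, a sofic approximation. In each case, feeding a challenge through the dictionary yields an asymptotic homomorphism, applying the group property yields nearby (possibly larger) homomorphisms, and the dictionary returns the desired ($\nu$-flexible) solution, proving ``group property $\Rightarrow$ all challenges solvable''. For the converse I would argue contrapositively: set $D_\ell=\min_h\sum_j 2^{-j}d(f_\ell(\gamma_j),h(\gamma_j))$ over the finitely many homomorphisms $h$ into the allowed $\Sym(N_\ell)$, so that the group property for $(f_\ell)$ is exactly $D_\ell\to 0$; if it fails, I pass to a subsequence with $D_\ell\ge\eps$, use weak-$*$ compactness of $\IRS(F_S)$ to extract a convergent sub-subsequence $X_\ell\to\mu$, verify that $\mu$ meets the relevant condition, invoke solvability of that challenge, and deduce $D_\ell\to 0$ — a contradiction.

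The hard part will be this compactness-plus-subsequence step, which is what reconciles the mismatched quantifier structure (arbitrary asymptotic-homomorphism sequences on one side, convergent challenges on the other); it is the conceptual crux of both the non-flexible and the flexible arguments. The technical crux will be the flexible bookkeeping: checking the extended-metric quasi-triangle inequality, confirming that near-equivariant injections propagate generator-closeness uniformly to all of $\Gamma$, and ensuring that the size bounds $n_\ell\le N_\ell\le n_\ell+\nu(\eps_\ell,n_\ell)$ together with the auxiliary sequence $\eps_\ell\to 0$ transfer faithfully between the analytic and combinatorial formulations. Since \cref{prop:BS-rigidViaHomsFlex} already records the flexible homomorphism formulation, the remaining work is precisely to package these estimates.
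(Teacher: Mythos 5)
Your overall architecture matches the paper's: both proofs run a dictionary between finite $F_S$-sets (equivalently, generator tuples) and asymptotic homomorphisms, use weak-$*$ compactness of $\IRS(F_S)$ to extract convergent subsequences and reconcile the quantifier mismatch, and separate the three statements by the condition imposed on the limit measure $\mu$ (namely $\mu\in\IRS(\Gamma)$, $\mu$ co-sofic, $\mu=\delta_{\{1\}}$). The paper phrases the BS-rigidity side through \cref{lem:disting-via-seqs} and the sets $\Sol_E^{\ge\eps,\nu\flex}(n)$ rather than through your minimizing functional $D_\ell$, but that difference is cosmetic.

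There is, however, a genuine gap in part (2), in the direction ``$\Gamma$ BS-rigid $\Rightarrow$ every BS-rigidity challenge has a solution''. Given a challenge $(X_\ell)$ with $|X_\ell|=n_\ell$ and $X_\ell\to\mu$ co-sofic, in order to invoke \cref{enum:BS-rigidityByHomsFlex} of \cref{prop:BS-rigidViaHomsFlex} you must produce genuine homomorphisms $g_\ell\colon\Gamma\to\Sym(n_\ell)$ --- on the \emph{same} $n_\ell$ points as $f_\ell$ --- with $d_{\TV}(N_{f_\ell,Q},N_{g_\ell,Q})\to 0$; equivalently, finite $\Gamma$-sets of cardinality exactly $n_\ell$ whose random stabilizers converge to $\mu$. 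You assert that the existence of such $g_\ell$ ``is precisely the assertion that $\mu$ is co-sofic'', supported by the routine fact that every element of $\IRS_{\findex}(\Gamma)$ is a limit of random stabilizers of finite $\Gamma$-sets. That routine fact yields $\Gamma$-sets of \emph{uncontrolled} cardinalities; co-soficity alone does not hand you $\Gamma$-sets of the prescribed sizes $n_\ell$ (padding with fixed points or taking disjoint copies distorts the local statistics unless $n_\ell$ is much larger than the sizes you happen to be given --- it already fails when $n_\ell$ is, say, $1.5$ times an available size). This size-matching step is exactly the content of \cref{lem:amplification-sparsification} (\cite[Lemma 7.6]{BLT}), which the paper imports as a separate lemma and applies at precisely this point (and again, in the contrapositive form, to manufacture the tuples $\ol\tau^{(\ell)}\in\Sol_E(|X_\ell|)$). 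Your plan needs this lemma stated and proved or cited; without it the forward implication of (2) does not close. The remainder of your outline --- the telescoping estimates, the quasi-triangle inequality for the extended metric, and the identification of \eqref{eq:weak-almost-injective} with $\mu=\delta_{\{1\}}$ --- is sound and consistent with what the paper does.
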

We prove \cref{prop:IRSFormulation} in \cref{sec:ProofIRSFormulation}.

\cref{prop:DistingusihableStableEquivUndercoSoficCondition} follows immediately from \cref{prop:IRSFormulation}. 
\begin{proof}[Proof of \cref{prop:DistingusihableStableEquivUndercoSoficCondition}]
Clearly, if $\Gamma$ is stable then it is also BS-rigid. For the other direction, suppose that $\Gamma$ is BS-rigid and that every IRS of $\Gamma$ is co-sofic. Let $\inparen{X_n}_{n=1}^{\infty}$ be a stability challenge for $\Gamma$, with $X_n\to \mu$ for some $\mu\in \IRS(\Gamma)$. By assumption, $\mu$ is co-sofic, so $\inparen{X_n}_{n=1}^{\infty}$ is also a BS-rigidity challenge for $\Gamma$. Since $\Gamma$ is BS-rigid, this challenge has a solution. It follows that $\Gamma$ is stable.
\end{proof}

\section{Proofs}

\subsection{Proof of Propositions \ref{prop:BS-rigidViaHoms} and \ref{prop:BS-rigidViaHomsFlex}}\label{sec:ProofBS-rigidViaHoms}

\cref{prop:BS-rigidViaHoms} is the case $\nu=0$ of \cref{prop:BS-rigidViaHomsFlex}. Hence, it suffices to prove the latter.
As before, $F_S$ denotes the free group on the set $S = \inset{s_1,\dotsc, s_k}$, and we fix a function $\nu:\R_{>0}\times \N\to \N\cup \{0,\infty\}$, monotone non-decreasing in the first argument.

The following lemma is a simple consequence of the triangle inequality. 
\begin{lemma}[{\cite[Lemma 6.9]{BLM} and \cite[Corollary 6.10]{BLM})}]
\label{lem:TV-information-loss}
Let $P'\subset P$ be finite subsets of $F_S$ and take $\ol\sigma,\ol\tau\in\Sym(n)^k$, $n\in \N$. Then
$d_{\TV}(N_{\ol\sigma,P'},N_{\ol\tau,P'})
\leq d_{\TV}(N_{\ol\sigma,P},N_{\ol\tau,P})$
\end{lemma}

\begin{lemma}
\label{lem:disting-via-seqs}
A subset $E$ of $F_S$ is $\nu$-flexibly BS-rigid if and only if the following condition holds: 

$(\star)$ For all sequences $(\ol\sigma^{(\ell)})_{\ell=1}^\infty$ and $(\ol\tau^{(\ell)})_{\ell=1}^\infty$, $\ol\sigma^{(\ell)}\in\Sym(n_\ell)^k$, $\ol\tau^{(\ell)}\in\Sol_E(n_\ell)$, $n_\ell\in\N$, such that $d_{\TV}(N_{\ol\sigma^{(\ell)},P},N_{\ol\tau^{(\ell)},P})\to 0$ for every finite subset $P$ of $F_S$, we have $\ol\sigma^{(\ell)}\in\Sol_E^{<\eps_\ell,\nu\flex}(n_\ell)$ for some $\eps_\ell\to 0$.
\end{lemma}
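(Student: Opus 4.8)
The plan is to prove both implications by contraposition, converting the $\eps$--$\delta$ quantifiers in the definition of $\nu$-flexible BS-rigidity into the sequential statement $(\star)$. Two elementary facts will be used throughout: first, $\Sol_E^{<\eps,\nu\flex}(n)$ is monotone non-decreasing in $\eps$ (since $\nu$ is non-decreasing in its first argument and the distance threshold relaxes), and $\Sol_E^{\ge\eps,\nu\flex}(n)$ is its complement in $\Sym(n)^k$; second, the identity tuple lies in $\Sol_E(n)$, so every $\ol\sigma$ belongs to $\Sol_E^{<\eps,\nu\flex}(n)$ once $\eps>k$, whence $a_\ell := \inf\{\eps>0 : \ol\sigma^{(\ell)}\in\Sol_E^{<\eps,\nu\flex}(n_\ell)\}$ is finite. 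The key external input is \cref{lem:TV-information-loss}, the monotonicity of total-variation distance under restriction of the word set.

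For the forward direction (BS-rigidity $\Rightarrow(\star)$) I would take sequences $(\ol\sigma^{(\ell)})$, $(\ol\tau^{(\ell)})$ satisfying the hypothesis of $(\star)$, namely $\dTV(N_{\ol\sigma^{(\ell)},P},N_{\ol\tau^{(\ell)},P})\to 0$ for every finite $P$, and show $a_\ell\to 0$. This suffices, because then $\eps_\ell := a_\ell + 1/\ell\to 0$ and, by definition of the infimum together with monotonicity in $\eps$, $\ol\sigma^{(\ell)}\in\Sol_E^{<\eps_\ell,\nu\flex}(n_\ell)$. Should $a_\ell\not\to 0$, I would pass to a subsequence with $a_\ell\ge\eps_0$ for some $\eps_0>0$, so that $\ol\sigma^{(\ell)}\notin\Sol_E^{<\eps_0,\nu\flex}(n_\ell)$, i.e. $\ol\sigma^{(\ell)}\in\Sol_E^{\ge\eps_0,\nu\flex}(n_\ell)$. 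Applying $\nu$-flexible BS-rigidity at $\eps_0$ then produces a finite $P(\eps_0)$ and $\delta(\eps_0)>0$ with $\dTV(N_{\ol\sigma^{(\ell)},P(\eps_0)},N_{\ol\tau^{(\ell)},P(\eps_0)})\ge\delta(\eps_0)$ along the subsequence, contradicting the hypothesis applied to the single finite set $P(\eps_0)$.

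For the reverse direction I would prove the contrapositive. Negating the definition yields a single $\eps_0>0$ such that for every finite $P$ and every $\delta>0$ there exist $n$, $\ol\sigma\in\Sol_E^{\ge\eps_0,\nu\flex}(n)$ and $\ol\tau\in\Sol_E(n)$ with $\dTV(N_{\ol\sigma,P},N_{\ol\tau,P})<\delta$. Since $F_S$ is countable, I would enumerate its finite subsets and set $P_\ell$ to be the union of the first $\ell$ of them, giving an increasing exhaustion $P_1\subseteq P_2\subseteq\cdots$ with $\bigcup_\ell P_\ell=F_S$. Applying the negation with $P=P_\ell$ and $\delta=1/\ell$ gives $n_\ell$, $\ol\sigma^{(\ell)}\in\Sol_E^{\ge\eps_0,\nu\flex}(n_\ell)$ and $\ol\tau^{(\ell)}\in\Sol_E(n_\ell)$ with $\dTV(N_{\ol\sigma^{(\ell)},P_\ell},N_{\ol\tau^{(\ell)},P_\ell})<1/\ell$. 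For any fixed finite $P$ there is $L$ with $P\subseteq P_L\subseteq P_\ell$ for $\ell\ge L$, so \cref{lem:TV-information-loss} gives $\dTV(N_{\ol\sigma^{(\ell)},P},N_{\ol\tau^{(\ell)},P})\le\dTV(N_{\ol\sigma^{(\ell)},P_\ell},N_{\ol\tau^{(\ell)},P_\ell})<1/\ell\to 0$, so the hypothesis of $(\star)$ holds. Yet $\ol\sigma^{(\ell)}\notin\Sol_E^{<\eps_0,\nu\flex}(n_\ell)$ for all $\ell$, and by monotonicity in $\eps$ any $\eps_\ell<\eps_0$ would force $\Sol_E^{<\eps_\ell,\nu\flex}(n_\ell)\subseteq\Sol_E^{<\eps_0,\nu\flex}(n_\ell)$; hence no sequence $\eps_\ell\to 0$ can realize $\ol\sigma^{(\ell)}\in\Sol_E^{<\eps_\ell,\nu\flex}(n_\ell)$, so the conclusion of $(\star)$ fails.

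I expect the only genuine subtlety to be the diagonalization in the reverse direction: one must first extract the single witness $\eps_0$ from the negated quantifier block, and then exhaust $F_S$ by an increasing chain of finite word sets so that convergence of the TV distance on each fixed $P$ follows uniformly via \cref{lem:TV-information-loss}. The remaining bookkeeping — monotonicity of $\Sol_E^{<\eps,\nu\flex}(n)$ in $\eps$, finiteness of $a_\ell$, and the equivalence between the clause ``$\ol\sigma^{(\ell)}\in\Sol_E^{<\eps_\ell,\nu\flex}(n_\ell)$ for some $\eps_\ell\to 0$'' and ``$a_\ell\to 0$'' — is routine.
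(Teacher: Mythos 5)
Your proposal is correct and follows essentially the same route as the paper: the forward direction is the easy contrapositive-style observation, and the reverse direction is exactly the paper's diagonalization over an ascending exhaustion $P_1\subseteq P_2\subseteq\cdots$ of $F_S$ with $\delta=1/\ell$, using \cref{lem:TV-information-loss} to pass from $P_\ell$ to an arbitrary fixed finite $P$. The only nitpick is in the forward direction: $a_\ell\ge\eps_0$ does not literally force $\ol\sigma^{(\ell)}\notin\Sol_E^{<\eps_0,\nu\flex}(n_\ell)$ (the infimum could be attained), but it does force $\ol\sigma^{(\ell)}\notin\Sol_E^{<\eps_0/2,\nu\flex}(n_\ell)$, so applying BS-rigidity at $\eps_0/2$ fixes this trivially.
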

\begin{proof}
Clearly, if $E$ is $\nu$-flexibly BS-rigid then $(\star)$ holds.
Now, let $(P_m)_{m=1}^{\infty}$ be an ascending sequence of finite subsets of $F_S$ such that $\bigcup_{m=1}^\infty P_m = F_S$.
If $E$ is not $\nu$-flexibly BS-rigid then there is an $\eps>0$ and sequences 
$(\ol\sigma^{(\ell)})_{\ell=1}^\infty$ and $(\ol\tau^{(\ell)})_{\ell=1}^\infty$, $\ol\sigma^{(\ell)}\in\Sym(n_\ell)^k$, $\ol\tau^{(\ell)}\in\Sol_E(n_\ell)$, $n_\ell\to\infty$, such that $d_{\TV}(N_{\ol\sigma^{(\ell)},P_{\ell}},N_{\ol\tau^{(\ell)},P_{\ell}}) < 1/\ell$,
but $\ol\sigma^{(\ell)}\in\Sol_E^{\geq\eps,\nu\flex}(n_\ell)$.
In light of \cref{lem:TV-information-loss}, this implies that $(\star)$ does not hold.
\end{proof}

Let $\Gamma$ be a quotient group of $F_S$, and write $\pi$ for the quotient map.
Recall the following notation from the introduction (here $n\in\N$, and $x$ is sampled uniformly from $[n]$): \begin{inparaenum}\item For $\ol\sigma\in \Sym(n)^k$ and $P\subseteq F_S$, we write $N_{\ol \sigma,P}$ for the distribution, over the power set $2^P$, of $\inset{w\in P\mid w\inparen{\ol \sigma}x=x}$. \item For a function $f:\Gamma\to \Sym(n)$ and $Q\subseteq \Gamma$, we write $N_{f,Q}$ for the distribution, over the power set $2^Q$, of $\inset{\gamma\in Q\mid f(\gamma)x=x}$.\end{inparaenum}

The following proof requires the following related definition: for a function $f\colon\Gamma\to\Sym(n)$ and $P\subset F_S$, write $N_{f,P}$ for the distribution, over the power set $2^P$, of $\inset{w\in P\mid f(\pi(w))x=x}$.

\begin{proof}[Proof of \cref{prop:BS-rigidViaHomsFlex}]
Assume that $E$ is $\nu$-flexibly BS-rigid. Take a sequence of functions $(f_\ell\colon\Gamma\to\Sym(n_\ell))_{\ell=1}^\infty$, $n_\ell\in\N$, such that
\begin{align}
\label{eq:DistingEquivalencePfAsymptoticHom}
    d(f_\ell(\gamma_1\gamma_2),f_\ell(\gamma_1)f_\ell(\gamma_2))\to_{\ell\to\infty}0\qquad\forall\gamma_1,\gamma_2\in\Gamma\ecomma
\end{align}
and such that there is a sequence of homomorphisms $(g_\ell\colon\Gamma\to\Sym(n_\ell))_{\ell=1}^\infty$ satisfying
\begin{align}
\label{eq:DistingEquivalencePfDistChallenge}
    d_{\TV}(N_{f_\ell,Q},N_{g_\ell,Q})\to_{\ell\to\infty}0\qquad\forall Q\subset\Gamma, |Q|<\infty\eperiod
\end{align}

Let $\ol\sigma^{(\ell)}=(f_{\ell}(\pi(s_1),\dotsc,f_{\ell}(\pi(s_k)))\in\Sym(n_{\ell})^k$ and
$\ol\tau^{(\ell)}=(g_{\ell}(\pi(s_1),\dotsc,g_{\ell}(\pi(s_k)))\in\Sol_E(n_\ell)$.
Take a finite subset $P$ of $F_S$, and let $A_{\ell}=\inset{x\in[n_\ell]\mid w(\ol\sigma^{(\ell)})x=f_{\ell}(\pi(w))x~~\forall w\in P}$.
Then \eqref{eq:DistingEquivalencePfAsymptoticHom} implies that $|A_{\ell}|/n_{\ell}\to_{\ell\to\infty}1$. Thus $d_{\TV}(N_{\ol\sigma^{(\ell)},P},N_{f_\ell,P})\to_{\ell\to\infty}0$. On the other hand, \eqref{eq:DistingEquivalencePfDistChallenge} implies that
$d_{\TV}(N_{f_\ell,P},N_{g_\ell,P})\to_{\ell\to\infty}0$.
Thus
$d_{\TV}(N_{\ol\sigma^{(\ell)},P},N_{g_{\ell},P})\to 0$. But $N_{g_{\ell},P}=N_{\ol\tau^{(\ell)},P}$ because $g_{\ell}$ is a homomorphism, and hence $d_{\TV}(N_{\ol\sigma^{(\ell)},P},N_{\ol\tau^{(\ell)},P})\to_{\ell\to\infty} 0$.

Since the above holds for every finite subset $P$ of $F_S$, and since $E$ is $\nu$-flexibly BS-rigid, \cref{lem:disting-via-seqs} implies that $\ol\sigma^{(\ell)}\in\Sol_{E}^{<\eps_{\ell},\nu\flex}(n_\ell)$ for some $\eps_\ell\to 0$. In other words, there is a sequence $(\ol\omega^{(\ell)})_{\ell=1}^\infty$, $\ol\omega^{(\ell)}\in\Sol_E(N_\ell)$, such that $\sum_{i=1}^k d(\sigma^{(\ell)}_i,\omega^{(\ell)}_i) < \eps_\ell$ and $n_\ell \le N_\ell \le n_\ell + \nu(\eps_\ell,n_\ell)$ for all $\ell$. For each $\ell\geq 1$, define a homomorphism $h_\ell\colon\Gamma\to\Sym(N_\ell)$ by letting $h_\ell(\pi(s_i))=\omega^{(\ell)}_i$ for each $1\leq i\leq k$.
Thus $d(f_\ell(\pi(s_i)), h_\ell(\pi(s_i)))=d(\sigma^{(\ell)}_i,\omega^{(\ell)}_i)\to 0$, and so \eqref{eq:DistingEquivalencePfAsymptoticHom} implies that
$d(f_\ell(\gamma),h_\ell(\gamma))\to 0$ for each $\gamma\in\Gamma$. Thus $\Gamma$ is BS-rigid.

Now assume that $\Gamma$ is $\nu$-flexibly BS-rigid.
Take sequences $(\ol\sigma^{(\ell)})_{\ell=1}^\infty$ and $(\ol\tau^{(\ell)})_{\ell=1}^\infty$, $\ol\sigma^{(\ell)}\in\Sym(n_\ell)^k$, $\ol\tau^{(\ell)}\in\Sol_E(n_\ell)$, such that
\begin{align}\label{eq:BS-via-Homs-BS-dist}
    d_{\TV}(N_{\ol\sigma^{(\ell)},P},N_{\ol\tau^{(\ell)},P})\to 0\qquad \forall P\subset F_S, |P|<\infty\eperiod
\end{align}

Fix a function $p\colon\Gamma\to F_S$ such that $\pi(p(\gamma))=\gamma$ for each $\gamma\in\Gamma$, and $p(\pi(s_i))=s_i$ for each $1\leq i\leq k$. Define functions $f_\ell\colon\Gamma\to\Sym(n_\ell)$ by letting $f_\ell(\gamma)=p(\gamma)(\ol\sigma^{(\ell)})$.
Take $\gamma_1,\gamma_2\in\Gamma$ and set $w=p(\gamma_1\gamma_2)p(\gamma_2)^{-1}p(\gamma_1)^{-1}$. Then $w\in\ker\pi$ and thus $w(\ol\tau^{(\ell)})=1_{\Sym(n_\ell)}$ for each $\ell\geq 1$. Hence, \eqref{eq:BS-via-Homs-BS-dist} implies that
$$d(f_{\ell}(\gamma_1\gamma_2),f_{\ell}(\gamma_1)f_\ell(\gamma_2))=
d(w(\ol\sigma^{(\ell)}),1_{\Sym(n_\ell)})\to 0
\eperiod$$
Define homomorphisms $g_{\ell}\colon\Gamma\to\Sym(n_\ell)$ by letting $g_\ell(\pi(s_i))=\tau^{(\ell)}_i$ for each $1\leq i\leq k$.
Then $g_\ell(\gamma)=p(\gamma)(\ol\tau^{(\ell)})$ for each $\gamma\in\Gamma$.
Take a finite subset $Q$ of $\Gamma$ and let $P=p(Q)$.
Then
$$d_{\TV}(N_{f_\ell,Q},N_{g_\ell,Q})=
d_{\TV}(N_{\ol\sigma^{(\ell)},P},N_{\ol\tau^{(\ell)},P})
\to_{\ell\to\infty}0\eperiod
$$
Since $\Gamma$ is $\nu$-flexibly BS-rigid, we deduce that there is a sequence $\eps_\ell\to 0$ and a sequence  $(h_\ell\colon\Gamma\to\Sym(N_\ell))_{\ell=1}^\infty$ of homomorphisms, with $n_\ell \le N_\ell\le n_\ell + \nu(\eps_\ell,n_\ell)$, 
such that
$d(f_\ell(\gamma), h_\ell(\gamma))\to 0$ for each $\gamma\in\Gamma$.
Define $\ol\omega^{(\ell)}\in\Sol_{E}(N_\ell)$ by letting $\ol\omega^{(\ell)}_i=h_\ell(\pi(s_i))$ for each $1\leq i\leq k$. Then, recalling that $p(\pi(s_i))=s_i$ for all $1\leq i\leq k$, we see that
$$
d(\ol \sigma^{(\ell)}, \ol \omega^{(\ell)}) = \sum_{i=1}^k d(\sigma^{(\ell)}_i,\omega^{(\ell)}_i)=
\sum_{i=1}^k d(f_{\ell}(\pi(s_i)), h_{\ell}(\pi(s_i)))< \eps'_\ell
$$
for some $\eps'_\ell\to 0$. Let $\eps''_\ell = \max\inset{\eps_\ell, \eps'_\ell}$. Then $\eps''_\ell \to 0$, and we have
$d(\ol\sigma^{(\ell)}, \ol \omega^{(\ell)})< \eps''_\ell$. Also, $n_\ell\le N_\ell \le n_\ell + \nu(\eps''_\ell, n_\ell)$ since $\nu$ is nondecreasing in the first argument. Hence $\ol\sigma^{(\ell)} \in \Sol^{<\eps''_\ell, \nu\flex}(n_\ell)$, 
and thus $E$ is BS-rigid by \cref{lem:disting-via-seqs}.
\end{proof}

\subsection{Proof of \cref{prop:IRSFormulation}}\label{sec:ProofIRSFormulation}
We require the following lemma.
\begin{lemma}
[{\cite[Lemma\ 7.6]{BLT}}]
\label{lem:amplification-sparsification}
Let $(X_n)_{n=1}^\infty$ be a sequence of finite $\Gamma$-sets, $|X_n|\to\infty$, $X_n\to\mu$, $\mu\in\IRS(\Gamma)$.
Let $(m_n)_{n=1}^\infty$ be a sequence of positive integers, $m_n\to\infty$. Then there is a sequence $(Y_n)_{n=1}^\infty$ of finite $\Gamma$-sets such that $|Y_n|=m_n$ and $Y_n\to\mu$.
\end{lemma}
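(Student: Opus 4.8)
The plan is to give a single explicit construction of $Y_n$ that works for every target sequence $m_n\to\infty$, treating amplification and sparsification uniformly. The basic fact I will use is that disjoint unions average the random stabilizer: for finite $\Gamma$-sets $A$ and $B$ one has $\mu_{A\sqcup B}=\frac{|A|}{|A|+|B|}\mu_A+\frac{|B|}{|A|+|B|}\mu_B$, and in particular $t$ disjoint copies of a single finite $\Gamma$-set $A$ satisfy $\mu_{A^{\sqcup t}}=\mu_A$. Combined with the fact that convergence in $\IRS(\Gamma)$ is tested on the basic clopen sets $C_{A,B}$, this yields the following stability principle: if $\mu_{A_n}\to\mu$ and $Y_n$ is obtained from a disjoint union of copies of $A_n$ together with a ``contaminating'' $\Gamma$-set whose relative size tends to $0$, then $\mu_{Y_n}\to\mu$.

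First I would choose the building block. Since $m_n\to\infty$ and $|X_j|\to\infty$, for each large $n$ the set $\inset{j\mid |X_j|\le \sqrt{m_n}}$ is finite and nonempty; let $j(n)$ be its maximum. Then $j(n)\to\infty$ as $n\to\infty$ (because $\sqrt{m_n}$ eventually exceeds $|X_{j_0}|$ for every fixed $j_0$), while $|X_{j(n)}|\le \sqrt{m_n}$, so $|X_{j(n)}|/m_n\to 0$. In particular $\mu_{X_{j(n)}}\to\mu$ and the block $X_{j(n)}$ has size $o(m_n)$, which is exactly the leverage needed to make a remainder negligible.

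Next I would assemble $Y_n$. Write $k_n=\lfloor m_n/|X_{j(n)}|\rfloor$ and $r_n=m_n-k_n|X_{j(n)}|$, so $0\le r_n<|X_{j(n)}|$, and let $Y_n$ be the disjoint union of $k_n$ copies of $X_{j(n)}$ together with $r_n$ fixed points (points on which $\Gamma$ acts trivially). Then $|Y_n|=m_n$ exactly, and by the averaging formula $\mu_{Y_n}=\inparen{1-\tfrac{r_n}{m_n}}\mu_{X_{j(n)}}+\tfrac{r_n}{m_n}\delta_{\Gamma}$, where $\delta_\Gamma$ is the Dirac mass at the stabilizer of a fixed point. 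Since $r_n/m_n<|X_{j(n)}|/m_n\to 0$ and $\mu_{X_{j(n)}}\to\mu$, evaluating on each $C_{A,B}$ gives $\mu_{Y_n}(C_{A,B})\to\mu(C_{A,B})$, i.e.\ $Y_n\to\mu$, as required.

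The main obstacle, and the only real point of the argument, is the exact-size requirement $|Y_n|=m_n$ in the regime where $m_n$ is merely comparable to (or smaller than) $|X_n|$: there one cannot simply tile by copies of $X_n$ and absorb the leftover into a vanishing fraction. The device that removes this obstacle is the choice of a \emph{slowly growing} block index $j(n)\to\infty$ with $|X_{j(n)}|=o(m_n)$, whose existence is guaranteed precisely by $m_n\to\infty$ together with $|X_j|\to\infty$. One should still check that $\Gamma$ acting trivially on $r_n$ points is a legitimate finite $\Gamma$-set (it is) and that the contaminating term $\tfrac{r_n}{m_n}\delta_\Gamma$ is harmless despite $\delta_\Gamma$ being far from $\mu$, which is immediate from its vanishing weight.
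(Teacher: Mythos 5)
Your argument is correct; note that the paper gives no proof of this lemma (it is imported from \cite[Lemma 7.6]{BLT}), and your construction---re-indexing to a slowly growing block $X_{j(n)}$ with $|X_{j(n)}|\le\sqrt{m_n}$ and $j(n)\to\infty$, tiling by $\lfloor m_n/|X_{j(n)}|\rfloor$ disjoint copies, and absorbing the remainder into $r_n<|X_{j(n)}|$ fixed points whose contribution $\tfrac{r_n}{m_n}\delta_\Gamma$ to the random stabilizer vanishes---is a valid, self-contained proof in the same amplification/sparsification spirit as the cited source. The only loose end is the finitely many $n$ for which $\inset{j\mid |X_j|\le\sqrt{m_n}}$ is empty; there simply take $Y_n$ to be the trivial action on $m_n$ points, which does not affect the limit since convergence in $\IRS(\Gamma)$ depends only on the tail of the sequence.
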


\begin{proof}[Proof of \cref{prop:IRSFormulation}]

1. See \cite[Lemma 7.4]{BLT} for a proof in the case $\nu=0$. The proof of the general case is almost identical.

2. We prove the claim for $\nu$-flexible BS-rigidity (the case of BS-rigidity is the special case $\nu=0$).

Recall that we realize $\Gamma$ as a quotient of a free group $F_S$, $S=\inset{s_1,\dotsc,s_k}$, and write $\Gamma=\langle S\mid E\rangle$ for some $E\subset F_S$.

Assume that $\Gamma$ is not $\nu$-flexibly BS-rigid.
Then $E$ is not $\nu$-flexibly BS-rigid by \cref{prop:BS-rigidViaHoms}.
By \cref{lem:disting-via-seqs}, there is $\eps>0$
and sequences $(\overline{\sigma}^{(\ell)})_{\ell=1}^\infty$ and $(\overline{\tau}^{(\ell)})_{\ell=1}^\infty$,
$\overline{\sigma}^{(\ell)}\in\Sol_{E}^{\geq \eps, \nu\flex}(n_\ell)$,
$\overline{\tau}^{(\ell)}\in\Sol_{E}(n_\ell)$, $n_\ell\in\N$,
such that 
$d_{\TV}(N_{\overline{\sigma}^{(\ell)},P}, N_{\overline{\tau}^{(\ell)},P})\to 0$ for every finite subset $P$ of $F_S$.
Now, in general, a tuple $\overline{\sigma}\in\Sym(n)^k$, $n\in\N$, gives rise to an action of $F_S$ on $[n]$, given by sending each basis element $s_i$ of $F_S$ to the permutation $\sigma_i$. If $\overline{\sigma}\in\Sol_{E}(n)$ then this $F_S$-action factors through $\Gamma$.

Using this, and the compactness of $\IRS(F_S)$, we see that there is a sequence $(X_\ell)_{\ell=1}^\infty$ of finite $F_S$-sets and a sequence $(Y_\ell)_{\ell=1}^\infty$ of finite $\Gamma$-sets, $|X_\ell|=|Y_\ell|$, such that $(X_\ell)_{\ell=1}^\infty$ and $(Y_\ell)_{\ell=1}^\infty$ converge to the same measure $\mu\in\IRS(\Gamma)$ (which is co-sofic in $\Gamma$ because each $Y_\ell$ is a finite $\Gamma$-set), but $(X_\ell)_{\ell=1}^\infty$ does not have a $\nu$-flexible solution. In other words, $(X_\ell)_{\ell=1}^\infty$ is a BS-rigidity challenge for $\Gamma$ that has no $\nu$-flexible solution.

Now, assume that $\Gamma$ has a BS-rigidity challenge $(X_\ell)_{\ell=1}^\infty$ that has no $\nu$-flexible solution.
We may assume that there is $\eps>0$ such that  for all $\ell\geq 1$ we have $d_S(X_\ell,Z_\ell)\geq \eps$ for every $\Gamma$-set $Z_\ell$ such that
$|X_\ell|\leq|Z_\ell|\leq |X_\ell|+\nu(\eps,|X_\ell|)$.

Let $\mu\in\IRS(\Gamma)$ be the $\Gamma$-co-sofic measure such that $X_\ell\to\mu$.
By \cref{lem:amplification-sparsification}, there is a sequence $(Y_\ell)_{\ell=1}^\infty$ of $\Gamma$-sets, $|Y_\ell|=|X_\ell|$, such that $Y_\ell\to\mu$. Define $n_\ell\coloneqq |X_\ell|=|Y_\ell|$
In general, a finite $F_S$-set $X$ gives rise to a tuple $\overline{\sigma}=(\sigma_1,\dotsc,\sigma_k)$ of permutations, $\sigma_i\in\Sym(X)$, defined by letting $\sigma_i x=s_i x$ for all $1\leq i\leq k$ and $x\in X$.

Using this, $X_\ell$ and $Y_\ell$ give rise to tuples $\overline{\sigma}^{(\ell)}\in\Sol_{E}^{\geq \eps,\nu\flex}(|X_\ell|)$ and $\overline{\tau}^{(\ell)}\in\Sol_{E}(|Y_\ell|)$
such that $d_{\TV}(N_{\overline{\sigma}^{(\ell)},P}, N_{\overline{\tau}^{(\ell)},P})\to 0$ for every finite subset $P$ of $F_S$, and so $E$ is not $\nu$-flexibly BS-rigid. Hence $\Gamma$ is not $\nu$-flexibly BS-rigid by \cref{prop:BS-rigidViaHoms}.

The proof of (3) is very similar to the proof of (2).
\end{proof}

\subsection{Proofs of claims from \cref{sec:GeneralProps}}\label{sec:ProofsGeneralProps}

\begin{proof}[Proof of \cref{prop:DistinguishabilityByFiniteResidual}]
In general, for a finitely-generated group $\Delta=\inang{S\mid E}$, $E\subset F_S$, the question of whether $\Delta$ is ($\nu$-flexibly) BS-rigid depends only on the sequence $(\Sol_E(n))_{n=1}^\infty$.

Let $\Gamma=\inang{S\mid E}$ and $\Gamma/R(\Gamma)=\inang{S\mid E'}$ be presentations of $\Gamma$ and $\Gamma/R(\Gamma)$, $E\subset E'\subset F_S$. The claim follows since $\Sol_E(n)=\Sol_{E'}(n)$ for all $n\geq 1$.
~
\end{proof}

\begin{lemma}
\label{lem:IRS-fg-normal-sgp}
Let $\Gamma$ be a finitely-generated group,
and let $N$ be a finitely-generated subgroup of $\Gamma$ such that $N\lhd \Gamma$.
Let $(X_{n})_{n=1}^\infty$
be a sequence of finite $\Gamma$-sets that converges to a measure $\mu$ which belongs to the space $\IRS(\Gamma/N)$, viewed as a subspace of $\IRS(\Gamma)$.
Then there is a sequence $(Z_{n})_{n=1}^\infty$ of $\Gamma/N$-actions
such that $(X_{n})_{n=1}^\infty$ and $(Z_{n})_{n=1}^\infty$ are equivalent
and $|Z_n|=|X_n|$ for all $n$.
\end{lemma}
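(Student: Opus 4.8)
The plan is to exploit the hypothesis that $\mu$ lies in $\IRS(\Gamma/N)$, which, under the preimage embedding $\Sub(\Gamma/N)\hookrightarrow\Sub(\Gamma)$, means precisely that $\mu$ is supported on subgroups containing $N$. Fix a finite generating set $T=\inset{t_1,\dotsc,t_r}$ of $N$ and set $A=T$. Since a subgroup contains $N$ if and only if it contains $T$, the set $\inset{H\mid N\le H}$ equals the clopen basic set $C_{A,A}$, and $\mu(C_{A,A})=1$. The key step is then to observe that the weak-$*$ convergence $X_n\to\mu$ gives $\mu_{X_n}(C_{A,A})\to 1$; but $\mu_{X_n}(C_{A,A})$ is exactly the fraction of points of $X_n$ fixed by every element of $T$, equivalently by all of $N$. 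Writing $X_n^N=\inset{x\in X_n\mid nx=x~\forall n\in N}$ for the set of $N$-fixed points, I conclude $\inabs{X_n^N}/\inabs{X_n}\to 1$.

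Next I would use normality of $N$ to turn $X_n^N$ into a $\Gamma/N$-set. If $x\in X_n^N$ and $\gamma\in\Gamma$, then for $m\in N$ we have $m\gamma x=\gamma(\gamma^{-1}m\gamma)x=\gamma x$, since $\gamma^{-1}m\gamma\in N$ fixes $x$; hence $X_n^N$ is a $\Gamma$-invariant subset on which $N$ acts trivially, i.e.\ a $\Gamma/N$-set. Crucially, $\Gamma$-invariance means that for $x\in X_n^N$ and $s\in S$ we have $sx\in X_n^N$, and conversely $sx\notin X_n^N$ whenever $x\notin X_n^N$ (apply invariance to $s^{-1}$).

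To correct the cardinality I would pad: set $Z_n=X_n^N\sqcup F_n$, where $F_n$ is a set of $\inabs{X_n}-\inabs{X_n^N}$ points carrying the trivial $\Gamma/N$-action. Then $Z_n$ is a $\Gamma/N$-set with $\inabs{Z_n}=\inabs{X_n}$. Define a bijection $f\colon X_n\to Z_n$ that is the identity on $X_n^N$ and maps $X_n\setminus X_n^N$ bijectively onto $F_n$. For $x\in X_n^N$ and any $s\in S$, both $x$ and $sx$ lie in $X_n^N$, where the $\Gamma$- and $\Gamma/N$-actions coincide, so $f(sx)=sx=sf(x)$, and such $x$ contribute nothing. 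For $x\notin X_n^N$, both $f(x)$ and $f(sx)$ land in $F_n$ (by the remark above), so each $s$ contributes at most $1$. Hence $d_S(X_n,Z_n)\le \inabs{S}\cdot\inabs{X_n\setminus X_n^N}/\inabs{X_n}=\inabs{S}\inparen{1-\inabs{X_n^N}/\inabs{X_n}}\to 0$, as desired.

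I expect the only substantive step to be the first one: correctly reading $\mu\in\IRS(\Gamma/N)$ as support on subgroups containing $N$, and converting weak-$*$ convergence into the density statement $\inabs{X_n^N}/\inabs{X_n}\to1$, where finite generation of $N$ is exactly what makes ``$N\le H$'' a clopen (finitely determined) condition. The remaining steps---the $\Gamma$-invariance of the fixed-point set and the padding/matching estimate---are routine bookkeeping.
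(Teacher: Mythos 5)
Your proof is correct and follows essentially the same route as the paper's: identify $\{H \mid N\le H\}$ with the clopen set $C_{A,A}$ for a finite generating set $A$ of $N$, deduce $|X_n^N|/|X_n|\to 1$ from weak-$*$ convergence, and replace the action outside the $N$-fixed locus by a trivial one. The only cosmetic difference is that you pad with a disjoint trivially-acted set $F_n$ rather than redefining the action on a copy of $X_n$, which yields an isomorphic $\Gamma/N$-set.
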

\begin{proof}
For $n\in\N$, let $X'_n$ be the subset of $X_n$ consisting of all points $x\in X_n$ such that $N\leq\stab_{\Gamma}(x)$.
Then $X'_n$ is a union of orbits on $X_n$ because $N$ is normal in $\Gamma$.
Let $Z_n = X_n \times \{0\}$ be a copy of $X_n$, equipped with the following $\Gamma$-action: For all $(\gamma,x)\in\Gamma\times X_n$, $\gamma (x,0)=(\gamma x, 0)$ if $x\in X'_n$ and $\gamma(x,0)=(x,0)$ otherwise. The action of $\Gamma$ on $Z_n$ clearly factors through $\Gamma/N$. To prove that $(X_n)_{n=1}^\infty$ and $(Z_n)_{n=1}^\infty$ are equivalent, it remains to show that $|X'_n|/|X_n|\to 1$.

Let $\mu_n\in\IRS(\Gamma)$ be the random stabilizer of $X_n$, and take a finite generating set $A\subset\Gamma$ for $N$. Then, recalling the notation from \cref{sec:IRS}, we have
$\mu_n(C_{A,A})=|X'_n|/|X_n|$ and
$\mu_n(C_{A,A})\to\mu(C_{A,A})$. But $\mu(C_{A,A})=1$ because $\mu$ belongs to $\IRS(\Gamma/N)$.
\end{proof}

\begin{proof}[Proof of \cref{prop:quotient-fg}]
Take a stability (resp. BS-rigidity) challenge $(X_n)_{n=1}^\infty$ for $\Gamma/N$, and write $\mu\in\IRS(\Gamma/N)$ for its limit measure.
Then $(X_{n})_{n=1}^\infty$
is also a stability (resp. BS-rigidity) challenge for $\Gamma$, and as such, it has a ($\nu$-flexible) solution
$(Y_{n})_{n=1}^\infty$, where each $Y_n$ is a $\Gamma$-set.

By Lemma \ref{lem:IRS-fg-normal-sgp}, there is a sequence $(Z_{n})_{n=1}^\infty$ of finite $\Gamma$-sets, such that $(Y_{n})_{n=1}^\infty$ and
$(Z_{n})_{n=1}^\infty$ are equivalent and $|Z_n|=|Y_n|$.
Thus $(Z_{n})_{n=1}^\infty$ is a ($\nu$-flexible) solution for
$(X_{n})_{n=1}^\infty$.
\end{proof}

\begin{proof}[Proof of \cref{prop:FiniteIndexSubgroup}]
Our claim about flexible stability is an immediate generalization of \cite[Lemma 3.3]{Ioana}. For flexible BS-rigidity, our proof slightly differs, but follows from the same idea of inducing actions (and approximate actions) from $H$ to $\Gamma$. We sketch the details below.

Fix a function $s\colon \Gamma/H\to\Gamma$ such that $s(1_{\Gamma}H)=1_\Gamma$ and $s(\gamma H)\in \gamma H$ for every $\gamma\in \Gamma$.
Then $c\colon\Gamma\times\Gamma/H
\to H$, given by $c(\gamma_1,\gamma_2 H)=s(\gamma_1 \gamma_2 H)^{-1} \gamma_1 s(\gamma_2 H)$, is a co-cycle for the left-multiplication action of $\Gamma$ on $\Gamma/H$, i.e.,
$c(\gamma_1 \gamma_2, \gamma_3 H)=c(\gamma_1, \gamma_2 \gamma_3 H)c(\gamma_2, \gamma_3 H)$ for all $\gamma_1,\gamma_2,\gamma_3\in\Gamma$.

Take a sequence of functions $(f^H_\ell\colon H\to\Sym(n_\ell))_{\ell=1}^\infty$, $n_\ell\to\infty$, such that $d(f^H_\ell(\gamma_1\gamma_2),f^H_\ell(\gamma_1)f^H_\ell(\gamma_2))\to 0$ for all $\gamma_1,\gamma_2\in H$, and assume that there is a sequence of homomorphisms $g^H_{\ell}\colon H\to\Sym(n_\ell)$ such that
\begin{equation}\label{eq:FiniteIndexSubgropfAndgSimilar}
d_{\TV}(N_{f^{H}_\ell,Q},N_{g^{H}_\ell,Q})\to 0~~~\text{for every finite }Q\subseteq H\eperiod
\end{equation}

Define
$f^\Gamma_\ell\colon\Gamma\to\Sym(\Gamma/H\times [n_\ell])$ and
$g^\Gamma_\ell\colon\Gamma\to\Sym(\Gamma/H\times [n_\ell])$
by letting
$f^\Gamma_\ell(\gamma_1)(\gamma_2 H,x)=(\gamma_1 \gamma_2 H, f^{H}_{\ell}(c(\gamma_1, \gamma_2 H))x)$ and
$g^\Gamma_\ell(\gamma_1)(\gamma_2 H,x)=(\gamma_1 \gamma_2 H, g^{H}_{\ell}(c(\gamma_1, \gamma_2 H))x)$.
Note that for every $h\in H$, the restriction of the permutation $f_{\ell}^{\Gamma}(h)\in\Sym(\Gamma/H\times [n_\ell])$ to $\inset{1_\Gamma H}\times [n_\ell]$ is identified in the natural way with the permutation $f_{\ell}^{H}(h)\in\Sym(n_\ell)$.

By direct computation, one can see that $d\inparen{f_\ell^\Gamma(\gamma_1\gamma_2), f_\ell^\Gamma(\gamma_1)f_\ell^\Gamma(\gamma_2)}\to 0$ for all $\gamma_1,\gamma_2\in \Gamma$ (see \cite{Ioana}).

For every finite subset $Q$ of $\Gamma$, let $Q_H$ denote the subset $c(Q\times\Gamma/H)$ of $H$.
Now, for $\gamma'\in\Gamma$ and $x\in [n_\ell]$, we have
$\stab_{f^{\Gamma}_{\ell}}(\gamma' H,x)=
\inset{\gamma\in \gamma' H\gamma'^{-1} \mid c(\gamma,\gamma' H)\in\stab_{f^H_\ell}(x)}$, and thus
$\stab_{f^{\Gamma}_{\ell}}(\gamma' H,x)\cap Q$ is determined by
$\stab_{f^H_\ell}(x)\cap Q_H$ and $\gamma' H$ for every finite subset $Q$ of $\Gamma$. Similarly, $\stab_{g^{\Gamma}_{\ell}}(\gamma' H,x)\cap Q$ is determined in the same manner by $\stab_{g^H_\ell}(x)\cap Q_H$ and $\gamma' H$. Thus, \eqref{eq:FiniteIndexSubgropfAndgSimilar} implies that $d_{\TV}(N_{f^{\Gamma}_\ell,Q},N_{g^{\Gamma}_\ell,Q})\to 0$ for every finite $Q\subseteq \Gamma$.

Since $\Gamma$ is $\nu$-flexibly BS-rigid, and since $(g^\Gamma_\ell)_{\ell=1}^\infty$ are homomorphisms, there is a sequence $(\eps_\ell)_{\ell=1}^\infty$ with $\eps_\ell\to 0$, and homomorphisms $\inparen{h_\ell^{\Gamma}\colon\Gamma\to \Sym(Y_\ell)}_{\ell=1}^\infty$ where $\Gamma/H\times [n_\ell]\subseteq Y_\ell$ and $Y_\ell$ is a finite set with $|Y_\ell|\le m_\ell + \nu(\eps_\ell,m_\ell)$ ($m_\ell = \inabs{\Gamma/H\times [n_\ell]}$), such that $d\inparen{f_\ell^{\Gamma}(\gamma),h_\ell^{\Gamma}(\gamma)}\to 0$ for all $\gamma \in \Gamma$. In particular, $d\inparen{f_\ell(\gamma),h_\ell(\gamma)}\to 0$ for every $\gamma\in H$, proving the claim
(noting that
$|Y_\ell|-n_\ell=([\Gamma:H]-1)n_\ell+\nu(\eps_\ell,[\Gamma:H]n_\ell)$).
\end{proof}

\begin{proof}[Proof of \cref{prop:RF-not-weakly-stable}]
Let $\Gamma$ be finitely-generated residually-finite ($\nu$-flexibly) BS-rigid group.
Let $(X_n)_{n=1}^\infty$ be a sofic approximation for $\Gamma$. That is, each $X_n$ is a finite $F_S$-set, and $(X_n)_{n=1}^\infty$ converges to the Dirac measure $\mu$ concentrated on the trivial subgroup of $\Gamma$.
But $\mu$ is co-sofic in $\Gamma$ because $\Gamma$ is residually finite.
Thus $(X_n)_{n=1}^\infty$ is a BS-rigidity challenge for $\Gamma$, and hence it has a ($\nu$-flexible) solution because $\Gamma$ is ($\nu$-flexibly) BS-rigid.
\end{proof}

\subsection{Proof of \cref{thm:NegativeKazhdan}}\label{sec:ProofNegativeKazhdan}
Recall that for a group $\Gamma$, generated by a finite set $S$, property $\ptau$ can be defined simply as follows: $\Gamma$ has property $\ptau$ if there is $\kappa>0$ such that for every finite-dimensional unitary representation $\rho\colon\Gamma\to\calU(\calH)$, if $\rho$ factors through a finite quotient of $\Gamma$ and $v\in\calH$ satisfies $\|\rho(s)v-v\|\leq \eps\|v\|$ for all $s\in S$, $\eps>0$, then $\|v-Pv\|\leq\frac{\eps}{\kappa}\|v\|$, where $P\colon\calH\to\calH^\Gamma$ is the orthogonal projection onto the subspace $\calH^\Gamma$ of $\Gamma$-invariant vectors.

Part 2 of \cref{thm:NegativeKazhdan} is proved in \cite{BeckerLubotzky}. We shall prove Part 1 here.
\begin{proof}[Proof of \cref{thm:NegativeKazhdan}(1)]
Since $\Gamma$ has infinitely many finite quotients, there is a convergent sequence $(X_n)_{n=1}^\infty$ of transitive finite $\Gamma$-sets, $|X_n|\to\infty$.
Write $\mu\in\IRS(\Gamma)$ for the limit measure of $X_n$. Clearly, $\mu$ is co-sofic.
For $n\in\N$, let $Y_n=(X_n\setminus\{x_0^n\})\times\{0\}$, where $x_0^n$ is an arbitrary element of $X_n$. Define an $F_S$-action on $Y_n$ as follows: Take $(s,x)\in S\times(X_n\setminus\{x_0^n\})$. If $sx\neq x_0^n$, let $s(x,0)=(sx,0)$. Otherwise, let $s(x,0)=(s^{2}x,0)$.
Then $Y_n\to\mu$, and thus $(Y_n)_{n=1}^\infty$ is a BS-rigidity challenge for $\Gamma$.
Assume for the sake of contradiction that $(Y_n)_{n=1}^\infty$ has a solution $(Z_n)_{n=1}^\infty$. That is, each $Z_n$ is a finite $\Gamma$-set, $|Z_n|=|Y_n|$, and $d_S(Y_n,Z_n)\to 0$.
Consider the $\Gamma$-modules $\C[X_n]$ and $\C[Z_n]$ of $\C$-linear combinations of elements of $X_n$ and $Z_n$, respectively, where $\Gamma$ acts by multiplication from the left. Also consider the $\Gamma$-action on the space $L\coloneqq\Hom_{\C}(\C[Z_n],\C[X_n])$, consisting of all linear maps from $\C[Z_n]$ to $\C[X_n]$, given by $\gamma f=\gamma\circ f\circ\gamma^{-1}$. Endowing $L$ with the unique Hermitian product such that the set $\inset{E_{x,z}\mid x\in X_n, z\in Z_n}$ is an orthonormal basis, the $\Gamma$-action on $L$ becomes a unitary representation (here $E_{x,z}$ is the linear map from $\C[Z_n]$ to $\C[X_n]$ that sends $z$ to $x$ and sends every $z'\in Z_n\setminus\inset{z}$ to $0$).
Note that the subspace of $\Gamma$-invariant vectors in $L$ is the space of morphisms of $\Gamma$-representations from $\C[Z_n]$ to $\C[X_n]$.
Let $f_n\colon \C[Z_n]\to \C[X_n]$ be the linear map such that $f_n((x,0))=x$ for every $x\in X_n\setminus\{x_0^{n}\}$.
Now, \cite[Proposition 2.4]{BeckerLubotzky} deals with this exact construction (without assuming that $\Gamma$ has property $\ptau$), and says that $\|f_n-h_n\|\geq\frac{1}{\sqrt{2}}\|f_n\|$ for every morphism of representations $h_n\colon \C[Z_n]\to\C[X_n]$.
But $\|sf_ns^{-1}-f_n\|\to 0$ for every $s\in S$, in contradiction to the assumption that $\Gamma$ has property $\ptau$.
\end{proof}

\section*{Acknowledgements}
O.B.\ has received funding from the European Research Council (ERC) under the European Union's Horizon 2020 research and innovation programme (grant agreement No. 803711).
A.L.\ is supported by a grant from the Institute for Advanced Study and by the European Research Council (ERC) under the European Union Horizon 2020 research and innovation program (Grant No. 692854).
J.M.\ is partially supported by NSF grant CCF-1814603.

\bibliography{testabilityInGroupTheory.bib}

\end{document}